\documentclass[12pt]{amsart}
\usepackage{amsmath}
\usepackage{amsthm}
\usepackage{aliascnt}
\usepackage{amssymb}
\usepackage{graphicx}
\usepackage{crimson}
\usepackage{cochineal}
\usepackage{tikz}
\usepackage{geometry}
\usepackage[dvipsnames]{xcolor}
\usepackage [utf8]{inputenc}

\usetikzlibrary{arrows.meta,calc,decorations.pathreplacing,positioning}

\definecolor{branchblue}{RGB}{35,91,168}
\definecolor{branchorange}{RGB}{201,83,35}
\definecolor{targetgreen}{RGB}{25,125,112}
\definecolor{softgray}{RGB}{105,112,120}

\usepackage{tcolorbox}

\newtcolorbox{commentbox}{
  colback=yellow!10,
  colframe=red!50!black,
  boxrule=0.5pt,
  arc=2pt,
  left=4pt,right=4pt,top=2pt,bottom=2pt
}

\newcommand{\eps}{\varepsilon}
\newcommand{\Id}{\operatorname{Id}}

\newcommand{\R}{\mathbb{R}}
\newcommand{\T}{\mathbb{T}}

\newcommand{\cP}{\mathcal{P}}
\newcommand{\cO}{\mathcal{O}}

\newcommand{\cT}{\mathcal{T}}

\newcommand{\ehc}{\operatorname{Ehc}}
\newcommand{\nuh}{\operatorname{Nuh}}
\newcommand{\inte}{\operatorname{int}}

\newcommand{\diam}{\text{\rm diam}}
\newcommand{\per}{\operatorname{Per}}
\newcommand{\peri}{\operatorname{per}}

\newcommand{\zeroeq}{\stackrel{\rm{o}}{=}}
\newcommand{\transv}{\pitchfork}

\newcommand{\cR}{\mathcal{R}}
\newcommand{\cC}{\mathcal{C}}
\newcommand{\oo}{\infty}
\newcommand{\lam}{\lambda}
\newcommand{\MME}{\mu_{\mathrm{MME}}}
\newcommand{\bp}{{\it Proof.}\quad}
\newcommand{\htop}{\mathrm{h}_{top}}

\newcommand{\epr}{\begin{flushright}\fbox{\hspace{.1em}}\:\end{flushright}}

\newtheorem{theorem}{Theorem}[section]

\newaliascnt{claim}{theorem}
\newtheorem{claim}[claim]{Claim}
\aliascntresetthe{claim}

\newaliascnt{definition}{theorem}
\newtheorem{definition}[definition]{Definition}
\aliascntresetthe{definition}

\newaliascnt{proposition}{theorem}
\newtheorem{proposition}[proposition]{Proposition}
\aliascntresetthe{proposition}

\newaliascnt{lemma}{theorem}
\newtheorem{lemma}[lemma]{Lemma}
\aliascntresetthe{lemma}

\newaliascnt{conjecture}{theorem}

\aliascntresetthe{conjecture}

\newaliascnt{question}{theorem}

\aliascntresetthe{question}

\newtheorem*{question*}{Question}

\newaliascnt{corollary}{theorem}
\newtheorem{corollary}[corollary]{Corollary}
\aliascntresetthe{corollary}

\newaliascnt{remark}{theorem}
\newtheorem{remark}[remark]{Remark}
\aliascntresetthe{remark}

\newtheorem{maintheorem}{Theorem}

\numberwithin{equation}{section}

\usepackage{hyperref}
\usepackage[capitalise,nameinlink]{cleveref}

\crefname{theorem}{theorem}{theorems}
\Crefname{theorem}{Theorem}{Theorems}

\crefname{claim}{claim}{claims}
\Crefname{claim}{Claim}{Claims}

\crefname{definition}{definition}{definitions}
\Crefname{definition}{Definition}{Definitions}

\crefname{proposition}{proposition}{propositions}
\Crefname{proposition}{Proposition}{Propositions}

\crefname{lemma}{lemma}{lemmas}
\Crefname{lemma}{Lemma}{Lemmas}

\crefname{conjecture}{conjecture}{conjectures}
\Crefname{conjecture}{Conjecture}{Conjectures}

\crefname{question}{question}{questions}
\Crefname{question}{Question}{Questions}

\crefname{corollary}{corollary}{corollaries}
\Crefname{corollary}{Corollary}{Corollaries}

\crefname{remark}{remark}{remarks}
\Crefname{remark}{Remark}{Remarks}

\crefname{maintheorem}{theorem}{theorems}
\Crefname{maintheorem}{Theorem}{Theorems}

\definecolor{cobalt}{rgb}{0.0, 0.28, 0.67}
\hypersetup{
    colorlinks=true,
    linkcolor=cobalt,
    citecolor=cobalt
    }

\title{Entropy and semiconjugacy on surfaces}

\author{Pablo D. Carrasco}
\address{ICEx-UFMG, Avda. Presidente Antonio Carlos 6627, Belo Horizonte-MG, BR31270-90}
\email{pdcarrasco@ufmg.br}
\thanks{P.D. Carrasco was partially supported by CNPq (Produtividade em Pesquisa) 305197/2025-8, Projeto Universal CNPq 401737/2025-0, FAPEMIG APQ-02338-24, and CAPES through the MATH-AMSUD program.}

\author{Federico Rodriguez-Hertz}
\address{Penn State, 227 McAllister Building, University Park, State College, PA16802}
\email{hertz@math.psu.edu}
 \thanks{F. Rodriguez-Hertz was partially supported by NSF grant DMS-2453688.}

\author{Jana Rodriguez-Hertz}
\author{Raúl Ures}
\address[J. Rodriguez-Hertz and R. Ures]
{ Department of Mathematics and Shenzhen International Center for Mathematics - Southern University of Science and Technology,
1088 Xueyuan Avenue, Shenzhen 518055, P.R. China}
\email[J. Rodriguez-Hertz]{rhertz@sustech.edu.cn}
\email[R. Ures]{ures@sustech.edu.cn}

 \date{\today}

\subjclass[2020]{Primary 37E30; Secondary 37C15, 37D25, 37A35, 54F15, 54C10.}

\keywords{Pseudo-Anosov homeomorphisms, monotone maps, measures of maximal entropy, nonuniform hyperbolicity, Bernoulli property, Whitney differentiability}

\begin{document}

\begin{abstract}
Let $g$ be a $C^\infty$ diffeomorphism in the isotopy class of a
pseudo-Anosov homeomorphism $f$ such that $g$ and $f$ have the same
topological entropy. In 1988, Handel proved that this implies the
existence of a semiconjugacy $\pi$ from $g$ to $f$. He
stated that, in general, there is at least one point $x$ such that
$\pi^{-1}(x)$ is disconnected. We show that this is not the case: for every $x$, the set
$\pi^{-1}(x)$ is the intersection of a nested sequence of closed topological disks, and hence is connected.\par
We also prove that there is a unique $g$-invariant probability measure projecting to the measure of maximal entropy of $f$. This measure is entropy-maximizing, hyperbolic, and Bernoulli, and the semiconjugacy induces a metric isomorphism between the corresponding measure-preserving systems.
\end{abstract}

 \maketitle

 \section{Introduction}
\label{sec:introduction}
In this paper we study properties of smooth surface diffeomorphisms that are isotopic to a pseudo-Anosov map. Within the Nielsen–Thurston classification of surface homeomorphisms (periodic, reducible, and pseudo-Anosov), it is the pseudo-Anosov type that exhibits the richest dynamical and geometric structure. See \cite{Thurston_1988}, \cite{Fathi_2021}.

The best understood case occurs when the surface is the torus $\T^2$. Every pseudo-Anosov isotopy class on $\T^2$ is represented by an Anosov diffeomorphism. By a classical result of Franks \cite{franksthe}, if $g$ is in the isotopy class of an Anosov diffeomorphism $f$, then $g$ is semiconjugate to $f$; that is, there exists a continuous surjection $\pi:M\to M$ such that $\pi g=f\pi$;  we also say that $f$ is a factor of $g$.
 Thus, the linear Anosov map realizes the minimum of the topological entropy in its isotopy class.

When the genus of the surface is greater than one, it is not necessarily true that a map $g$ isotopic to a pseudo-Anosov homeomorphism $f$ is semiconjugate to $f$; new ``large dynamics'' may appear, preventing the existence of the factor map. By Fathi and Shub \cite{FS79}, the topological entropy of $g$ is at
least as large as that of $f$; thus, equality is the extremal case in
this isotopy class. This phenomenon was elucidated in a seminal paper by Handel \cite{handel1988}, who proved that if the topological entropies of $g$ and $f$ coincide, 
then there exists a semiconjugacy $\pi$ from $g$ to $f$.
The entropy condition above prevents the appearance of large dynamics, but of course small invariant sets carrying non-trivial dynamics can appear. At first sight, one might think that the fibers of the semiconjugacy could be quite wild. In his pioneering work, Handel reinforces this expectation by stating:
\begin{center}
\begin{minipage}[t]{.8\textwidth}
    ``...if $f: M\to M$ is a pseudo-Anosov diffeomorphism and if
$g: M \to M$ is homotopic to $f$, then there is a closed subset $Y\subset M$ such that $g|_Y$ is
semiconjugate to $f$ ; i.e. there is a map $\pi:Y\to M$ such that $\pi\circ g|_Y =f\circ \pi$. In this
paper we give a sufficient condition for $Y$ to be all of $M$ and hence for $g$ to be
semiconjugate to $f$. In general the pre-images of points under the semi-conjugacy
{\bf will be disconnected}.''
\end{minipage}
\end{center}
\medskip
In this paper we show that the phenomenon anticipated by Handel does not
occur. In fact, every fiber of
$\pi$ is {\em cellular}, that is, it is the intersection of a nested sequence of closed
topological disks.\par
\smallskip

\begin{maintheorem}\label{mainthm:fiber}
Let $g$ be a $C^{\infty}$ diffeomorphism of a closed surface $M$. Suppose that $g$ is isotopic to a pseudo-Anosov map $f$, with $\htop(f)=\htop(g)$, and let $\pi:M\to M$ be the semiconjugacy from $g$ to $f$ given by Handel's theorem. 
Then $\pi^{-1}(x)$ is cellular for every $x\in M$.
\end{maintheorem}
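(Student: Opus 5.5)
We lift everything to the universal cover $\tilde M\cong\mathbb{H}^2$ and work with lifts $\tilde g$, $\tilde f$ that are equivariantly isotopic, together with the lifted semiconjugacy $\tilde\pi$. The map $\tilde\pi$ is at bounded distance from the identity, so fibers of $\tilde\pi$ are compact. They project homeomorphically onto fibers of $\pi$, provided we first check that a fiber of $\tilde\pi$ has diameter smaller than the injectivity scale. That check follows from boundedness and equivariance, since two deck-translates of one point cannot both lie in one fiber of $\tilde\pi$: their images under $\tilde\pi$ are distinct translates. It therefore suffices to show that each $K=\tilde\pi^{-1}(\tilde x)$ is cellular in the plane. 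A compact set $K\subset\R^2$ is cellular iff it is connected and $\R^2\setminus K$ is connected. For plane continua, this is equivalent to $K$ being a nonseparating continuum. So the proof splits into two claims: (a) every fiber is connected; (b) no fiber separates the plane.

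For (b) I would use the standard property of semiconjugacies homotopic to the identity. If $U$ were a bounded complementary component of $K$, then $\tilde\pi(\partial U)\subset\{\tilde x\}$. Since $\tilde\pi$ is proper of degree one (it is homotopic to the identity by a bounded homotopy), $\tilde\pi(\overline U)$ is a compact set whose boundary lies in $\tilde\pi(\partial U)\cup\tilde\pi(U)$. A degree/open-mapping-type argument, or simply the fact that $\tilde\pi(\overline U)$ is a compact set with $\partial\tilde\pi(\overline U)\subset \tilde\pi(\partial U)=\{\tilde x\}$, forces $\tilde\pi(\overline U)=\{\tilde x\}$, because a compact subset of the plane with boundary a single point is that point. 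Hence $U\subset K$, a contradiction. So each fiber is "filled", and the fiber has a connected complement.

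The heart is (a), connectedness. I would use the structure of $f$: a fiber over $x$ is the intersection $\tilde\pi^{-1}(\tilde W^s_{loc}(\tilde x))\cap\tilde\pi^{-1}(\tilde W^u_{loc}(\tilde x))$. One could work with preimages of small Markov-type rectangles $R_n=\tilde f^{-n}(R)\cap \tilde f^{n}(R)$ around $\tilde x$, which shrink to $\tilde x$, so that $K=\bigcap_n\tilde\pi^{-1}(R_n)$. This is a nested intersection of compact sets. If each $\tilde\pi^{-1}(R_n)$ (or its filling) is connected, then $K$ is connected; by the argument in (b), the filled versions are closed disks, which directly exhibits cellularity. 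Connectedness of $\tilde\pi^{-1}(R)$ for a disk $R$ reduces to showing that $\tilde\pi$ is a monotone map. A proper degree-one map of the plane whose fibers were disconnected would have to separate components. I would try to rule this out using Handel's construction: $\tilde\pi(y)$ is determined by shadowing, namely the unique point whose $\tilde f$-orbit stays at bounded distance from the $\tilde g$-orbit of $y$. Suppose $K=K_1\sqcup K_2$ with $K_1,K_2$ closed. Take a small disk neighborhood $D$ of $\tilde x$; then $\tilde\pi^{-1}(D)$ has a component $C_1\supset K_1$ not meeting $K_2$ for $D$ small. Next, the image $\tilde\pi(C_1)$ would be a compact connected set containing $\tilde x$ that does not cover a neighborhood of $\tilde x$, or else a second sheet would appear. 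A local degree argument then shows that the sum of local degrees of $\tilde\pi$ at the components equals $1$, so some component has degree $0$. Finally, I would try to show that degree-$0$ pieces contradict entropy equality: a degree-$0$ piece lets one perturb (homotope rel the rest) $\tilde\pi$ near $C_1$ to miss $\tilde x$. Iterating under $\tilde g$ and using the expansion of $\tilde f$ along unstable leaves, this would produce fibers whose images under $\tilde g^n$ have diameter growing, contradicting boundedness of $\tilde\pi-\Id$.

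The main obstacle is precisely this last step, excluding degree-zero components. I expect it to require genuine input from $\htop(g)=\htop(f)$, beyond topology alone, since Handel's general semiconjugacy on a subset $Y$ can have disconnected fibers. I would first try to use smoothness via Katok/Pesin theory: hyperbolic measures of maximal entropy for $g$ give horseshoes, and an extra component could yield an extra horseshoe crossing. That would force a strictly larger exponential growth of periodic points in some Nielsen class, contradicting $\htop(g)=\htop(f)$ through Handel's/Fathi–Shub's counting of Nielsen classes.
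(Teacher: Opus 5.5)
There is a genuine gap. Your reduction to $\widetilde M$ and the criterion ``cellular $\iff$ nonseparating continuum'' are fine, but step (a), connectedness of the fibers, is the whole content of the theorem, and the proposal does not establish it.

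\begin{itemize}
\item Saying that connectedness of $\widetilde\pi^{-1}(R)$ ``reduces to showing that $\widetilde\pi$ is monotone'' is circular, since monotone means exactly that fibers are connected.
\item The degree bookkeeping does not close. If $K=K_1\sqcup K_2$, the local degrees only satisfy $d_1+d_2=1$, so $(2,-1)$ is as possible as $(1,0)$. A degree-zero piece is not contradictory in itself.
\item Homotoping $\widetilde\pi$ to miss $\widetilde x$ destroys the relation $\widetilde\pi\widetilde g=\widetilde f\widetilde\pi$, so iterating afterwards gives nothing.
\item The horseshoe/Nielsen step is only a hope. You do not explain how a disconnected fiber would produce extra growth. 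Moreover, entropy equality gives no direct control of an individual fiber: by the relative variational principle it only forces $\int h_{\mathrm{top}}(g,\pi^{-1}(y))\,d\MME(y)=0$. That says nothing about, say, fibers over periodic points of $f$, and the theorem concerns \emph{every} fiber.
\end{itemize}

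Step (b) is also invalid as written, though repairable. The inclusion $\partial\widetilde\pi(\overline U)\subset\widetilde\pi(\partial U)$ holds for open maps, not for proper degree-one maps at bounded distance from the identity. For a counterexample, collapse $\partial U$ to a point, view $\overline U/\partial U\cong S^2$, and project the sphere onto a disk. The fiber over the collapse point is then the separating circle $\partial U$. The conclusion of (b) is nevertheless true, for a dynamical reason. The set $\widetilde g^n(U)$ is a bounded complementary component of $\widetilde g^n(K)=\widetilde\pi^{-1}(\widetilde f^n\widetilde x)$, which lies in a uniformly bounded disk around $\widetilde f^n(\widetilde x)$. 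Hence $\widetilde g$-orbits of points of $U$ stay uniformly close to those of points of $K$, and \Cref{prop.semiconj} gives $U\subset K$.

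What is missing is (i) a mechanism by which entropy equality yields leafwise information on a full-measure set, and (ii) a way to transfer that almost-sure information to every fiber.

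For (i), the paper works with the $g$-invariant $\mu$ satisfying $\pi_*\mu=\MME$, which is a hyperbolic measure of maximal entropy.
\begin{itemize}
\item Abramov--Rokhlin gives zero fiber entropy, so $\xi\vee\cP_\pi$ is the point partition and $\pi$ is injective on typical points of unstable and stable plaques.
\item Recurrence and the expansion of $f$ give leafwise surjectivity, $\widetilde\pi(\widetilde W^\pm(\widetilde x;\widetilde g))=\widetilde W^{u/s}(\widetilde\pi(\widetilde x);\widetilde f)$.
\item Folds are ruled out by combining topological crossings of lifted leaves, the Sard-type lemma for measures of maximal entropy, and Ben Ovadia's absolute continuity of stable holonomies.
\end{itemize}
The outcome is that for $\widetilde x$ in a full-measure set $\widetilde X$, $\widetilde\pi$ is monotone and onto on $\widetilde W^\pm(\widetilde x;\widetilde g)$, and $\widetilde\pi^{-1}(\widetilde\pi(\widetilde x))\cap\widetilde W^\pm(\widetilde x;\widetilde g)=\{\widetilde x\}$.

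For (ii), the transfer is planar.
\begin{itemize}
\item Around an arbitrary $\widetilde p$, take leafwise convex $su$-polygons $P$ for $\widetilde f$ whose vertices lie in $\widetilde\pi(\widetilde X)$.
\item Their sides lift uniquely to stable/unstable arcs of $\widetilde g$ forming a Jordan curve $\partial Q$ with $\widetilde\pi(\partial Q)=\partial P$.
\item An expansivity argument traps the whole fiber $\widetilde\pi^{-1}(\widetilde p)$ in $Q$. A lifted leaf separating two fiber points would keep $\widetilde f^n(\widetilde p)$ at bounded distance from $\widetilde f^n(\widetilde\pi(L))$.
\item Nested, shrinking $P_n$ give nested disks with $\bigcap_n Q_n=\widetilde\pi^{-1}(\widetilde p)$. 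This is cellularity directly, for typical and non-typical fibers alike.
\end{itemize}

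Your arbitrary rectangles $R_n$ cannot play this role, because the preimage of $\partial R_n$ need not be a Jordan curve. It is precisely placing the vertices at images of typical points, where $\widetilde\pi$ is fiber-injective and monotone along leaves, that makes $\partial Q$ a Jordan curve.
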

\smallskip


\subsection{The relevant measure of maximal entropy}


For a probability measure $\mu$ on $M$, let $\pi_*\mu$ denote its
pushforward by $\pi$.\par
Let $\mu_{\mathrm{MME}}$ denote the unique measure of maximal entropy for the pseudo-Anosov map $f$.  
\begin{maintheorem}\label{mainthm:isomorphism.mu} 
Let $g$ be a $C^{\infty}$ diffeomorphism of a closed surface $M$, and suppose that $g$ is isotopic to a pseudo-Anosov map $f$.
Suppose that
\[
\htop(f)=\htop(g),
\]
and let $\pi:M\to M$ be the semiconjugacy from $g$ to $f$ given by Handel's theorem.

Then there exists a unique \(g\)-invariant probability measure $\mu$
such that
\[
\pi_*\mu=\MME.
\]
This measure is an entropy-maximizing measure for \(g\). Moreover, it is hyperbolic and Bernoulli, and
\[
\pi:(M,\mu,g)\longrightarrow(M,\MME,f)
\]
is a metric isomorphism.
\end{maintheorem}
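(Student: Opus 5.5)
The plan is to prove Theorem~\ref{mainthm:isomorphism.mu} in four steps: show that the set of points with non-trivial fiber is $\MME$-null, lift $\MME$ to $g$, prove uniqueness, and then transfer the dynamical properties.

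\textbf{Step 1: non-trivial fibers form a null set.} Let $Z=\{x\in M:\pi^{-1}(x)\text{ is not a single point}\}$. This set is $f$-invariant, since $\pi g=f\pi$ and $g$ is a homeomorphism. Because $\MME$ is ergodic, $\MME(Z)\in\{0,1\}$. Suppose $\MME(Z)=1$. By Theorem~\ref{mainthm:fiber}, each fiber over $Z$ is a non-degenerate cellular continuum, so it has positive diameter. Its $g$-iterates stay inside the fibers $\pi^{-1}(f^n x)$.

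I would then use the hyperbolic structure of $f$. Take a $\MME$-generic $x$ and points $p,q\in\pi^{-1}(x)$. Lifting to the universal cover, $\tilde\pi$ is at bounded distance from the identity. So $\tilde g^n\tilde p$ and $\tilde g^n\tilde q$ remain within bounded distance of each other for all $n\in\mathbb Z$, because both project to the same $f$-orbit. Thus each fiber is a set of points that are mutually bi-asymptotic in the coarse sense.

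To rule this out on a full-measure set, I would combine this with entropy. By Ledrappier--Walters, $\sup\{h_\nu(g):\pi_*\nu=\MME\}=h_{\MME}(f)+\int h_{top}(g,\pi^{-1}(x))\,d\MME$. Since $\htop(g)=\htop(f)$, Bowen's inequality gives $h_{top}(g,\pi^{-1}(x))=0$ for a.e.\ $x$. This alone does not kill $Z$, however.

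The main obstacle is exactly here: showing that fibers over $\MME$-typical points are singletons. My first attempt is as follows. Fibers are cellular, hence "small" connected sets. A fiber over a typical point lies between a stable and an unstable prong of $f$ lifted to $g$. I would construct $g$-invariant lamination-like sets $\pi^{-1}(W^s_f(x))$ and $\pi^{-1}(W^u_f(x))$, and argue that a non-degenerate fiber of measure-one type forces an invariant family of continua. By Fubini along the product structure of $\MME$, using the local product structure of the pseudo-Anosov, this would give a continuum of such sets. That would in turn produce positive entropy in the fibers, or contradict the area-type bound, or contradict Katok's horseshoe approximation; the last would give $\htop(g)>\htop(f)$.

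Concretely, I would take a $g$-ergodic lift $\nu$ of maximal entropy and apply Katok's theorem to get horseshoes with entropy close to $h_\nu(g)=\htop(f)$. On these horseshoes, $\pi$ is injective, because distinct points in a horseshoe separate in the universal cover. Then $\pi$ restricted to the horseshoe is injective, and the images have entropy close to $\htop(f)$. Hence they carry measures converging to $\MME$. This shows that $\MME$-a.e.\ fiber meets the support of a hyperbolic lift in a point. I will then need to upgrade this to the statement that the fiber is that point, using cellularity and zero fiber entropy.

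\textbf{Step 2: existence.} Once $\MME(Z)=0$, the map $\pi$ is injective on $\pi^{-1}(M\setminus Z)$, a full-measure Borel set. The inverse is Borel by Lusin--Souslin. Define $\mu=(\pi|^{-1})_*\MME$. This measure is $g$-invariant, and $\pi$ is a measurable isomorphism, so $h_\mu(g)=h_{\MME}(f)=\htop(g)$.

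\textbf{Step 3: uniqueness.} Any $g$-invariant $\mu'$ with $\pi_*\mu'=\MME$ gives full mass to $\pi^{-1}(M\setminus Z)$. Since $\pi$ is injective there, $\mu'=\mu$.

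\textbf{Step 4: properties of $\mu$.} Bernoullicity transfers through the isomorphism, since $(f,\MME)$ is Bernoulli. For hyperbolicity, $h_\mu(g)>0$, so by Ruelle's inequality on a surface $\mu$ has one positive and one negative exponent. Ergodicity passes through the isomorphism, so these exponents are constant.
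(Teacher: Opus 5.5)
The gap is Step 1, and everything after it depends on it, in particular uniqueness in Step 3. You never show that $\MME$-almost every fiber is a singleton, and none of the mechanisms you list produces that.

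\begin{itemize}
\item Zero fiber entropy from Ledrappier--Walters is compatible with non-degenerate fibers, as you note yourself.
\item The horseshoe claim is unjustified. Expansivity of $g$ on a Katok horseshoe separates distinct orbits only up to a fixed small scale in $M$, not to infinity in $\widetilde M$. Nothing prevents two points of the horseshoe from having lifted orbits at bounded distance for all times, and hence the same $\pi$-image; two points on a common stable arc that $\pi$ folds or collapses are an example. Ruling out such folds on typical leaves is exactly the hard part of the paper.
\item Even granting injectivity on horseshoes, weak-$*$ convergence of measures on their images to $\MME$ says nothing about individual fibers. The ``upgrade'' you postpone is the whole problem.
\item Invoking the cellularity theorem is circular. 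The paper proves cellularity using leafwise monotonicity of $\pi$ on a $\mu$-full set, and that monotonicity comes out of the proof of this theorem.
\end{itemize}

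Moreover, your target $\MME(Z)=0$ is strictly stronger than what is claimed. A metric isomorphism only requires $\pi$ to be injective on a $\mu$-conull set $X$. The fiber through a $\mu$-typical point may still contain further points, which then form a $\mu$-null set. The paper never proves $\MME(Z)=0$, and neither uniqueness nor cellularity implies it.

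The paper instead separates the two conclusions.

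For the isomorphism, it fixes an ergodic lift $\mu$, which is hyperbolic by Ruelle's inequality as in your Step 4. The Abramov--Rokhlin formula with equal entropies gives $h_\mu(g\,|\,\cP_\pi)=0$. This forces $\xi^+\vee\cP_\pi$ to be the point partition mod $\mu$ for an increasing partition $\xi^+$ subordinate to unstable manifolds, so $\pi$ is injective on a conull part of each unstable atom. The paper then proves leafwise surjectivity and rules out folds on typical leaves:
\begin{itemize}
\item a fold yields two unstable arcs with the same $\widetilde\pi$-image;
\item an intersection-number argument in $\widetilde M$ gives topological crossings of these arcs with typical stable leaves;
\item the Buzzi--Crovisier--Sarig Sard lemma upgrades these to transverse crossings on a set of positive conditional measure;
\item Ben Ovadia's absolute continuity of stable holonomy then puts positive conditional mass on the non-injectivity set, a contradiction.
\end{itemize}
Global injectivity on $X$ follows by intersecting $W^+(x)$ with $W^-(y)$ and using the bounded-orbit characterization of fibers.

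Uniqueness and Bernoullicity come from a separate argument. All ergodic lifts of $\MME$ give full mass to $\ehc^*(p)$ for a single hyperbolic periodic point $p$, so they are homoclinically related. Buzzi--Crovisier--Sarig shows that homoclinically related ergodic measures of maximal entropy coincide.

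Your Steps 2 and 4 are fine once an isomorphism is in hand; Bernoullicity can indeed be pulled back from $(f,\MME)$. But uniqueness does not follow from an isomorphism for a single lift, since distinct lifts could a priori select different points in the same fibers. You need this homoclinic-class input or an equivalent.
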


The existence of a {\em measure-theoretic isomorphism} between two Ber\-nou\-lli shifts of equal entropy has been known since Ornstein's theorem \cite{ornstein1970bernoulli}.

 Theorem \ref{mainthm:isomorphism.mu} shows that the topological semiconjugacy $\pi$ {\em is itself an isomorphism} between the corresponding Bernoulli systems.  Thus, $\pi$ realizes a concrete, dynamically defined isomorphism whose abstract existence follows from Ornstein’s theory.

\subsection{Outline of the proof}
\label{sec:outline}
\begin{enumerate}
\item The proof of \Cref{mainthm:fiber} relies on \Cref{mainthm:isomorphism.mu}: we use that $\pi$ is monotone on almost all stable and unstable manifolds. 
For any given point $x$, we construct a neighborhood basis of $\pi(x)$
consisting of polygons whose boundaries are formed by stable and unstable
manifolds of $f$. Their sides are the $\pi$-images of segments contained
in stable and unstable manifolds of $g$ on which $\pi$ is monotone. The
corresponding lifted polygons form a neighborhood basis of
$\pi^{-1}(\pi(x))$.\par
That is, we construct adapted polygons with stable and unstable sides around an arbitrary point $\pi(x)$, and show that their
lifts trap the entire corresponding fiber $\pi^{-1}(\pi(x))$. By choosing a nested sequence of such polygons, we conclude that every fiber of $\pi$ is
a nested intersection of topological disks. 
 The proof of this involves the topology of the plane and an order argument. 
See \Cref{ssub:adaptedpolygons}.
\item  To show that $\pi$ is monotone on almost every invariant manifold, we see that the equality of entropy implies that the extension defined by $\pi$ contains no additional entropy inside the fibers. When this is expressed through partitions subordinate to unstable manifolds, the common refinement of the subordinate partition and the fiber partition is essentially atomic. Applying the same argument to $g^{-1}$ gives the corresponding statement along stable manifolds.  This implies the existence of a full-measure set $X$ such that no two distinct points of $X$ lying on the same stable or unstable manifold have the same image under $\pi$. Positive entropy implies that $\pi$ is onto the invariant manifolds of points of $X$. \par
To obtain the monotonicity of $\pi$ along invariant manifolds, one has to
rule out \emph{folds}, that is, pairs of points $x,y$ on the same
invariant manifold satisfying $\pi(x)=\pi(y)$ and having an intermediate
point $x<z<y$ such that $\pi(z)\neq\pi(x)=\pi(y)$. 
A fold produces a Jordan domain in the universal cover, partially bounded by an unstable segment of a typical point $x$. Let $\mu$ be the measure of maximal entropy obtained in Theorem \ref{mainthm:isomorphism.mu}. By Ben Ovadia's result \cite{Ovadia2023}, $\mu$ has product structure. The structure of lifted invariant manifolds implies that there is a positive measure set of points whose stable manifolds topologically cross the folded unstable manifold twice. 

We use a version of Sard's lemma developed in \cite{HHTUSRB}, later adapted to measures of maximal entropy in \cite{BCS2022}, that implies that the stable manifolds transversely intersect the unstable manifold of $x$ twice. This produces a stable holonomy between two parts of the folded unstable segment. The two holonomy-related points have the same $\pi$-image because their images lie simultaneously on the same stable and unstable leaves of $f$, whose lifts intersect uniquely. Ben Ovadia's product structure implies that stable holonomy sends sets of positive conditional measure in one unstable segment into sets of positive conditional measure in the other unstable segment, thus producing a set of positive conditional measure on which $\pi$ is not injective, a contradiction. This provides the required monotonicity of $\pi$ when restricted to invariant manifolds, which is needed in Step (1). See Section \ref{sec:isomorphism}. 

\item The fundamental part of the proof of \Cref{mainthm:isomorphism.mu} is to show that there exists essentially one ergodic homoclinic class. {\em Ergodic homoclinic classes} were developed by F.~Rodriguez Hertz, J.~Rodriguez Hertz, A.~Tahzibi and R.~Ures in \cite{HHTU11} (see also \cite{HHTUSRB}). They are the measurable analogs of the homoclinic classes (basic sets) appearing in the Smale spectral decomposition theorem of Axiom A diffeomorphisms \cite{Smale_1967}. Namely, given a periodic point $p$, the {\em ergodic homoclinic class} of $p$ is the set of points $x$ such that
$$W^+(x)\transv W^-(\cO(p))\ne\varnothing\qquad \text{and}\qquad W^-(x)\transv W^+(\cO(p))\ne\varnothing,$$
where $W^-(x)$ and $W^+(x)$ denote the Pesin stable and unstable manifolds defined in Section \ref{sec:preliminaries}. An important part of showing that there is essentially one ergodic homoclinic class is proving that the $\pi$-images of stable and unstable manifolds are unbounded in the universal cover for all points in a full-measure set. See Section \ref{sec:surjective.invariant}. From this it follows that for almost every $x$ and almost every $y$, the stable manifold of $x$ and the unstable manifold of $y$ intersect in a topologically transverse way. See Section \ref{section.topological.intersection}. 
The Sard argument mentioned above allows one to show that there is a full measure set where these intersections are transverse; hence almost all points belong to the same ergodic homoclinic class. Once there is a unique large ergodic homoclinic class, it follows that the measure $\mu$ is Bernoulli by the corresponding spectral-decomposition theorem. See Section \ref{sec:Sard}. 
\end{enumerate}
\begin{remark}[The conservative case]\label{rem:conservative}
If $g$ preserves the area measure $m$ and $\pi_*m=\mu_{\mathrm{MME}}$, then the conclusion of \Cref{mainthm:fiber} remains valid under the weaker assumption that $g$ is $C^r$, $r>1$.\par
More generally, if $\pi_*m$ is ergodic and has positive entropy, then $m$ is hyperbolic and Bernoulli. If, in addition,
\[
h_m(g)=h_{\pi_*m}(f),
\]
then
\[
\pi\colon (M,m,g)\longrightarrow (M,\pi_*m,f)
\]
is a metric isomorphism.
\end{remark}
\subsection{Relation with entropy-preserving models.}
Theorems \ref{mainthm:fiber} and \ref{mainthm:isomorphism.mu} also fit into the broader search for simple models that retain the essential dynamics of a system. 
An early formulation of the relation between entropy and stability appears in
Shub's note \cite{Shu75}, which places
homotopy-invariant lower bounds for topological entropy in a stability
framework. For pseudo-Anosov maps, Fathi and Shub obtained the sharp form
relevant here: a pseudo-Anosov homeomorphism realizes the minimum of
topological entropy in its isotopy class \cite{FS79}. Thus, the entropy equality
considered in this paper is the extremal case of a topological lower bound that
is fixed throughout the isotopy class.
In one-dimensional dynamics, Milnor--Thurston kneading theory shows that a unimodal interval map of positive entropy admits a monotone semiconjugacy to a tent map with the same entropy \cite{MT88}. A two-dimensional counterpart was obtained by Crovisier and Pujals \cite{CP18}: every strongly dissipative diffeomorphism of the disk admits a semiconjugacy to a continuous map of a compact real tree; this semiconjugacy preserves the entropy of every non-atomic ergodic measure and distinguishes such measures. In the zero-entropy regime, Crovisier, Pujals and Tresser \cite{CPT24} prove that mildly dissipative disk diffeomorphisms are either generalized Morse--Smale or infinitely renormalizable. Their aperiodic chain-recurrence classes factor onto odometers through semiconjugacies whose fibers are the connected components of the class and are singletons almost everywhere. The one-dimensional character of this picture is further developed by Crovisier, Lyubich, Pujals and Yang \cite{CLPY24}, who show, for a class of infinitely renormalizable unicritical disk diffeomorphisms, that the renormalizations converge super-exponentially to the space of one-dimensional unimodal maps.\par

The real-tree model of Crovisier and Pujals is particularly close in spirit to
an earlier construction of Fathi for pseudo-Anosov dynamics \cite{Fat90}. The
spaces of leaves of the lifted stable and unstable foliations, endowed with the
metrics induced by their transverse measures and then completed, are real
trees; their product carries a hyperbolic extension of the pseudo-Anosov map.
In unpublished work of three of the present authors \cite{HHU04}, this
construction was used to compare all the dynamics in the isotopy class of a
fixed pseudo-Anosov map. More precisely, for each \(g\) in this isotopy class,
one identifies points whose lifted \(g\)-orbits remain at bounded distance for
all \(n\in\mathbb Z\). The resulting quotient dynamics is conjugate to the
restriction of the same hyperbolic homeomorphism to an invariant subset of the
product of the two completed trees.

This comparison also highlights the particular strength of the construction
of Crovisier and Pujals \cite{CP18}: strong dissipation produces a single
compact real tree as a genuine one-dimensional factor of the original
dynamics, and the resulting reduction loses neither non-atomic ergodic
measures nor their entropy. The Fathi--HHU construction  provides a
two-sided hyperbolic ambient model, built from the product of the stable and
unstable trees, for the entire pseudo-Anosov isotopy class. \par

Our setting is not the same:
we do not assume a dissipation hypothesis, but the quotient dynamics is prescribed a priori by the pseudo-Anosov representative of the isotopy class, which can be further quotiented to one-dimensional dynamics.  Under the equality of topological entropies, Handel's theorem provides the factor map. 
Theorems \ref{mainthm:fiber} and \ref{mainthm:isomorphism.mu} show that this factor is faithful in two complementary senses: all its fibers are cellular, while for the distinguished measure of maximal entropy, the semiconjugacy is a metric isomorphism. More generally, every invariant measure possesses an upstairs counterpart where the factor map collapses only structures of strictly lower entropy. Conjecturally, any collapsed positive entropy is localized entirely on periodic orbits.

\subsection{Relation with higher-rank rigidity.}
The strategy of the proof has a higher-rank antecedent in a series of papers by A. Katok and F. Rodriguez Hertz. For actions with Cartan homotopy data, \cite{KRH2007} established uniqueness of the invariant measure projecting to Haar measure, measurable isomorphism under the semiconjugacy, monotonicity properties, and cellularity of its fibers. The measure-theoretic and holonomy mechanisms were further developed in \cite{KRH2010}, while the passage from measurable rigidity to topology through boxes bounded by codimension-one invariant manifolds was developed in \cite{KRH2016}. Our argument has the same broad structure: measurable rigidity first yields leafwise monotonicity, which is then used to trap fibers inside adapted polygons. The essential difference is that the affine structures and commuting elements supplied by higher rank are unavailable for a single surface diffeomorphism; their role is played here by entropy equality, homoclinic-class theory, product structure, and the planar geometry of the stable and unstable leaves of the pseudo-Anosov factor.

\section{Preliminaries}
\label{sec:preliminaries}

Given a measure $\mu$ and a measurable map $T:X\to Y$, $T_*\mu$ denotes the pushforward of $\mu$ under $T$; that is, the measure that satisfies for all measurable sets $B$:
\[
T_*\mu(B)=\mu(T^{-1}(B)).
\]
If $A$ and $B$ are measurable sets, $A\zeroeq B$ means that $\mu(A\triangle B)=0$. If $T:X\to X$, 
we say that $\mu$ is $T$-invariant if $T_*\mu=\mu$. We say that a $T$-invariant measure is {\em ergodic} if $T^{-1}(A)\zeroeq A\Rightarrow \mu(A)\mu(A^c)=0$. 

If $I$ is a measurable set satisfying $\mu(I)>0$, we denote by $\mu_I$ the conditional measure on $I$, that is 
\[
\mu_I(A)=\frac{\mu(A\cap I)}{\mu(I)}\qquad \forall A\text{ measurable}.    
\]
Note that if $\mu$ is $T$-invariant and $I$ is a $T$-invariant set (meaning, $I\zeroeq T^{-1}I$), then $\mu_I$ is $T$-invariant.

Assume $T:X\to X$ is a continuous map of a compact metric space. By the variational\- principle (see for example \cite{Mane1987}), its topological entropy satisfies
\[
\htop(T)=\sup\bigl\{h_\mu(T):\mu\text{ is a $T$-invariant probability measure}\bigr\},
\]
where $h_\mu(T)$ denotes the metric entropy of $T$ with respect to $\mu$. A $T$-invariant probability measure $\mu$ is called a {\em measure of maximal entropy} if
\[
h_\mu(T)=\htop(T).
\]
The supremum in the variational principle is not always attained.

Let $g: M\to M$ be a $\cC^{r}$ diffeomorphism of a compact Riemannian surface, $r\geq 1$, and let $\mu$ be a $g$-invariant probability measure. For $\mu$-almost every $x$, the {\em Lyapunov exponents} of $g$ at $x$ are given by
\begin{align*}
&\lambda^{+}(x;g)=\lim_{n\to+\infty}\frac{1}{n}\log \lVert Dg_x^n\rVert\\
&\lambda^{-}(x;g)=-\lim_{n\to+\infty}\frac{1}{n}\log \bigl\lVert (Dg_x^n)^{-1}\bigr\rVert.
\end{align*}
It follows that $\lambda^{-}(x)\leq \lambda^{+}(x)$. If $\mu$ is ergodic, the Lyapunov exponents are constant $\mu$-almost everywhere.

Whenever these exponents are distinct, Oseledet's theorem implies the existence of a $Dg$-invariant splitting
\[
T_xM=E^{-}(x)\oplus E^{+}(x)
\]
such that, for every nonzero $v\in E^{\pm}(x)$,
\[
\lim_{n\to\pm\infty}\frac{1}{n}\log \lVert Dg_x^n v\rVert=\lambda^{\pm}(x).
\]
The measure $\mu$ is called {\em hyperbolic} if 
\[
\lambda^-(x)<0<\lambda^+(x)
\]
for $\mu$-almost every $x$.

For every $x\in M$, define the {\em Pesin stable manifold} of $x$ by
\[
W^-(x;g)=\left\{y:\limsup_{n\to+\infty}\frac1n\log d(g^n(x),g^n(y))< 0\right\}.
\]
The {\em Pesin unstable manifold} $W^+(x;g)$ of $x$ is $W^-(x;g^{-1})$. If $r>1$ there is a Borel set $\mathcal R$ contained in the non-wandering set $NW(g)$ of $g$, such that for $x\in\mathcal R$, $W^\pm(x;g)$ is an immersed manifold \cite{pesin76}, and $\mu(\mathcal R)=1$ for every $g$-invariant hyperbolic measure $\mu$.

For $x\in\mathcal R$ and $\eps>0$ we denote by
\[
    W^\pm_{\eps}(x;g)
\]
the interval inside $W^\pm(x;g)$, centered at $x$ and of radius $\eps$. We fix some $\rho>0$ small (say, less than $10^{-2}$ times the injectivity radius of $\exp$), and denote
\[
    W^\pm_{\mathrm{loc}}(x;g)
\]
the connected component of $W^\pm(x;g)\cap B_\rho(x)$, where $B_\rho(x)$ is the Riemannian ball of center $x$ and radius $\rho$.

For a pseudo-Anosov homeomorphism $f$, we define
\[
W^s(x;f)=\left\{y\in M:d(f^n(x),f^n(y))\longrightarrow 0 \text{ as }n\longrightarrow+\infty\right\},
\]
and define $W^u(x;f)=W^s(x;f^{-1})$.\par
If $\Lambda$ is a hyperbolic set contained in the non-wandering set $NW(f)$ of a diffeomorphism $f$, then $\Lambda\subset\cR$ and for every $x\in\Lambda$
$$W^s(x;f)=W^-(x;f)\qquad\text{and}\qquad W^u(x;f)=W^+(x;f).$$

\subsection{Ergodic homoclinic classes and spectral decompositions.}
\label{subsection.pesin.spectral}

In the sixties, Smale discovered a decomposition of hyperbolic dynamics into basic pieces with strong recurrent properties. See Theorem \ref{teo:smale}.
Each of these basic sets consists of a {\em homoclinic class}, which we define below. \par
Given a diffeomorphism $f:M\to M$, for any two points $x,y\in \cR$ we define the relation $x\approx y$ if 
$$W^-(x;f)\transv W^+(y;f)\ne\varnothing\qquad\text{and}\qquad W^+(x;f)\transv W^-(y;f)\ne\varnothing.$$
We also define the relation $x\sim y$ if there is $z\in\cO(x)$ such that $z\approx y$.
When restricted to the set of hyperbolic periodic points, both $\approx$ and $\sim$ are equivalence relations. Let $\per_h(f)$ denote the set of hyperbolic periodic points of $f$. We define the {\em homoclinic class} of $p$ by
$$HC(p)=\overline{\{p'\in \per_h(f):p'\sim p\}}, $$
and the {\em small homoclinic class} of $p$ by
$$HC^*(p)=\overline{\{p'\in \per_h(f):p'\approx p\}}.$$
\par
In 2011, F. Rodriguez Hertz, J. Rodriguez Hertz, A. Tahzibi and R. Ures extended this notion to a more general one, depending only on Pesin stable and unstable manifolds. \par
Given a hyperbolic periodic point $p$, its {\em ergodic homoclinic class} is defined by
$$\ehc(p)=\{x\in\cR: x\sim p\}.$$
The {\em small ergodic homoclinic class} of $p$ is defined by
$$\ehc^*(p)=\{x\in\cR:x\approx p\}.$$

Recall that a diffeomorphism $f$ is {\em Axiom A} if its non-wandering set $NW(f)$ is hyperbolic and periodic points are dense in $NW(f)$.
The following proposition follows from the Inclination Lemma and is left to the reader.
\begin{proposition} If $f$ is Axiom A, then for every periodic point $p\in NW(f)$,
$$\ehc(p)=HC(p)\qquad\text{and}\qquad \ehc^*(p)=HC^*(p)$$
\end{proposition}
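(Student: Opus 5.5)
We prove the two equalities by showing each set contains the other. Throughout we use that for Axiom A $f$ the non-wandering set is hyperbolic, so for $x\in NW(f)$ the Pesin manifolds $W^\pm(x;f)$ coincide with the uniform stable and unstable manifolds $W^{s}(x;f)$, $W^u(x;f)$, as recalled above. One interpretive point must be fixed first. As written, $\ehc(p)$ consists of points of $\cR$, which may include wandering points, while $HC(p)$ is a closed subset of $NW(f)$. We therefore read the equality as an equality inside $NW(f)$: intersect $\ehc(p)$ with $NW(f)$, or assume $\cR\subset NW(f)$ as stated in the preliminaries. It is also convenient to compare the closure of the hyperbolic periodic points in $\ehc(p)$ with $HC(p)$.

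For the inclusion $\ehc(p)\subset HC(p)$, take $x\in NW(f)$ with $x\approx z$ for some $z\in\cO(p)$. Since $f$ is Axiom A, periodic points are dense in $NW(f)$. By local product structure on the hyperbolic set $NW(f)$, for every periodic $q$ close enough to $x$ the local manifolds $W^s_{loc}(q)$ and $W^u_{loc}(q)$ are $C^1$-close to those of $x$. The transverse intersections $W^s(x)\transv W^u(z)$ and $W^u(x)\transv W^s(z)$ occur along compact pieces of the global manifolds. Transversality is an open condition, and compact pieces of the global manifolds vary continuously with the base point on the hyperbolic set. Hence $q\approx z$, so $q\sim p$, and $x$ is a limit of such $q$, giving $x\in HC(p)$. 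The same argument with $z=p$ gives $\ehc^*(p)\subset HC^*(p)$.

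For the reverse inclusion, let $x\in HC(p)$, so $x=\lim q_n$ with $q_n\approx z$ for some fixed $z\in\cO(p)$. Passing to a subsequence we may take the same $z$ for all $n$, since $\cO(p)$ is finite. We must produce transverse intersections $W^s(x)\transv W^u(z)$ and $W^u(x)\transv W^s(z)$. This is the main step, and we use the Inclination Lemma. Since $q_n\approx z$, there is a transverse homoclinic-type point in $W^u(q_n)\transv W^s(z)$. By the $\lambda$-lemma applied at the periodic point $q_n$, large backward iterates of a small disc of $W^s(z)$ through that point accumulate $C^1$ on $W^s_{loc}(q_n)$. The rate, however, depends on the hyperbolicity constants at $q_n$, and these are uniform on $NW(f)$. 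So for $n$ large, $W^s(\cO(z))$ contains discs uniformly $C^1$-close to $W^s_{loc}(q_n)$, hence close to $W^s_{loc}(x)$. Together with the uniform size of $W^u_{loc}(x)$, this yields a transverse intersection of $W^u(x)$ with $W^s(\cO(p))$, and symmetrically $W^s(x)\transv W^u(\cO(p))\ne\varnothing$.

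Rather than handling the orbit bookkeeping carelessly, apply the uniform $\lambda$-lemma to the fixed point $z$ of $f^{\peri(p)}$ and work with the iterate $f^{\peri(p)}$. This places the intersections on $W^\pm(z)$ itself, so $x\approx z$, and for the small classes $x\approx p$. Hence $HC(p)\subset\ehc(p)$ and $HC^*(p)\subset\ehc^*(p)$.

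The main obstacle is this uniformity in the second inclusion. The intersection points $W^u(q_n)\cap W^s(z)$ might escape to infinity along the leaves as $n\to\infty$. What controls this is exactly the uniform hyperbolicity and local product structure on $NW(f)$, which make the $\lambda$-lemma estimates independent of $n$.
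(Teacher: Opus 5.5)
Your proof follows the route the paper indicates; the paper gives no proof, saying only that the proposition follows from the Inclination Lemma and leaving it to the reader. The inclusion $\ehc(p)\subset HC(p)$ is fine as written. Density of periodic points in $NW(f)$, continuous dependence of compact pieces of $W^s(q)$ and $W^u(q)$ on $q$ in the hyperbolic set, and openness of transversality give $q\approx z$ for every periodic $q$ near $x$. For the reverse inclusion, applying the $\lambda$-lemma at the approximating periodic points $q_n$ is also the right idea. However, two steps there are misstated.

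\textbf{The uniformity in $n$.} Your last paragraph calls this the main obstacle, but it is neither available nor needed. The $\lambda$-lemma makes a disc $\eps$-close to $W^s_{\mathrm{loc}}(q_n)$ only after a number of iterates that depends on the initial disc. That number depends on how far the point of $W^u(q_n)\transv W^s(z)$ lies from $q_n$ along $W^u(q_n)$, on the angle there, and on the size of the disc. Uniform hyperbolicity gives no control of these as $n\to\infty$, so the claim that the estimates are independent of $n$ is unjustified. It is also irrelevant, because you only need one $n$:
\begin{itemize}
\item choose $\delta>0$ such that every $C^1$ disc $\delta$-close to $W^s_{\mathrm{loc}}(x)$ crosses $W^u_{\mathrm{loc}}(x)$ transversely;
\item fix a single $n$ with $W^s_{\mathrm{loc}}(q_n)$ $\delta/2$-close to $W^s_{\mathrm{loc}}(x)$;
\item iterate as long as necessary for that fixed $n$.
\end{itemize}

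\textbf{The orbit bookkeeping.} Applying the $\lambda$-lemma at $z$ only gives accumulation on $W^s_{\mathrm{loc}}(z)$ and $W^u_{\mathrm{loc}}(z)$, which says nothing about $W^\pm(x)$. The lemma must still be applied at $q_n$, but for the iterate $f^{\peri(p)\peri(q_n)}$. This iterate fixes $q_n$ and leaves $W^s(z)$ and $W^u(z)$ invariant. The discs accumulating on $W^s_{\mathrm{loc}}(q_n)$ and $W^u_{\mathrm{loc}}(q_n)$ then lie in $W^s(z)$ and $W^u(z)$ themselves, so $x\approx z$ for a single $z$. This matters for two reasons:
\begin{itemize}
\item $x\sim p$ requires both transverse intersections with the same point of $\cO(p)$;
\item $HC^*(p)\subset\ehc^*(p)$ requires $z=p$ exactly.
\end{itemize}

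With these two corrections the argument is complete.
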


Smale's spectral decomposition theorem \cite{Smale_1967} states that the
non-wandering set of an Axiom~A diffeomorphism decomposes into finitely
many disjoint homoclinic classes.

\begin{theorem}[Smale Spectral Decomposition theorem \cite{Smale_1967}]\label{teo:smale} If $f$ is an Axiom A diffeomorphism, then there exist hyperbolic periodic points $p_1,\dots,p_N$ such that
     $$NW(f)=\bigcup_{n=1}^N \ehc(p_n)\quad\text{is a disjoint union}.$$
Moreover:
\begin{enumerate}
    \item $f$ is transitive on each $\ehc(p_n)$.
    \item $f^{\peri(p_n)}$ is topologically mixing on each $\ehc^*(p_n)$.
\end{enumerate}
\end{theorem}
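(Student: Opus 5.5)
We would prove \Cref{teo:smale} by reducing it to the corresponding statement about classical homoclinic classes, using the preceding proposition. Since $f$ is Axiom~A, that proposition gives $\ehc(p)=HC(p)$ and $\ehc^*(p)=HC^*(p)$ for every periodic $p\in NW(f)$. It therefore suffices to decompose $NW(f)$ into finitely many disjoint sets $HC(p_n)$ and to check transitivity and mixing on them. The basic tool is the local product structure of the hyperbolic set $NW(f)$. There are $\delta,\eps>0$ such that for $x,y\in NW(f)$ with $d(x,y)<\delta$, the local manifolds $W^s_\eps(x)$ and $W^u_\eps(y)$ meet transversally in exactly one point. The standard Axiom~A fact that $\Omega$ has local product structure puts this point back in $NW(f)$.

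\emph{Step 1 (classes are open and finite).} Let $p,q$ be hyperbolic periodic points with $d(p,q)<\delta$. Then $W^s(p)\transv W^u(q)\neq\varnothing$ and $W^u(p)\transv W^s(q)\neq\varnothing$, so $p\approx q$ and in particular $p\sim q$. We then show that $\sim$ is an equivalence relation on $\per_h(f)$. Transitivity uses the Inclination Lemma: if $W^u(\cO(p))$ crosses $W^s(\cO(q))$ transversally, then iterates of $W^u(\cO(p))$ accumulate in the $C^1$ topology on $W^u(\cO(q))$, hence also cross $W^s(\cO(r))$ whenever $W^u(\cO(q))$ does. Consequently each $\sim$-class is $\delta$-open in $\per_h(f)$. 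Because periodic points are dense in $NW(f)$ and $NW(f)$ is compact, it can be covered by finitely many $\delta/2$-balls. This yields only finitely many classes, represented by periodic points $p_1,\dots,p_N$.

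\emph{Step 2 (disjointness and covering).} By density of periodic points, $NW(f)=\bigcup_n HC(p_n)$. The closures of two distinct classes cannot come within $\delta/2$ of each other: two periodic points in different classes at distance less than $\delta$ would be related by Step~1. Hence the union is disjoint, and each $HC(p_n)$ is clopen in $NW(f)$.

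\emph{Step 3 (dynamics on each piece).} For transitivity on $HC(p_n)$, take open sets $U,V$ meeting the class. Choose periodic points $a\in U$ and $b\in V$ that are $\sim p_n$. A transverse intersection point $z\in W^u(\cO(a))\transv W^s(\cO(b))$, together with the Inclination Lemma, gives $f^k(U)\cap V\neq\varnothing$ for some $k$. For mixing of $f^{\peri(p_n)}$ on $HC^*(p_n)$, we would run the same argument with $\approx$ and the fixed period $\peri(p_n)$. The point is that for $a\approx b$ the intersection $W^u(a)\transv W^s(b)$ is between the manifolds of the points themselves, not of their orbits. The $\lambda$-lemma then gives $f^{k\,\peri(p_n)}(U)\cap V\neq\varnothing$ for all large $k$, because iterates of a small disc in $W^u(a)$ accumulate on all of $W^u(a)$.

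We expect the main obstacle to be Step~1, specifically that $\approx$ and $\sim$ are equivalence relations on periodic points and that $\delta$-closeness forces relatedness. This is where the uniform size of the local manifolds and the Inclination Lemma must be combined carefully, handling orbits versus individual points in the case of $\approx$. Everything else is compactness and bookkeeping.
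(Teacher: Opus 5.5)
The paper does not prove this theorem; it quotes it from Smale. So your argument has to stand on its own. Steps 1 and 2 (finiteness and disjointness of the classes) are fine, but the mixing part of Step 3 has a genuine gap. The claim that iterates of a small disc $D\subset W^u(a)$ accumulate on all of $W^u(a)$ holds only along times divisible by $\peri(a)$. The set $f^{k\peri(p_n)}(D)$ lies in $W^u(f^{k\peri(p_n)}(a))$, which is a different leaf unless $\peri(a)\mid k\peri(p_n)$. Following orbits through a single point $z\in W^u(a)\transv W^s(b)$, you get $f^N(U)\cap V\neq\varnothing$ only for $N=j\,\peri(a)+i\,\peri(b)$ with $i,j$ large. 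Eventually these are just the multiples of $\gcd(\peri(a),\peri(b))$, which need not include all large multiples of $\peri(p_n)$.

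Concretely, take a horseshoe coded by the full $2$-shift, with $p_n=0^\infty$ (period $1$), $a=(01)^\infty$ and $b=f(a)$. Then $a\approx b\approx p_n$, but your construction only produces even transition times from $U\ni a$ to $V\ni b$, whereas mixing of $f$ requires all large times. The relation $a\approx b$ by itself does not see the period of $p_n$, and that period is exactly what separates mixing from transitivity.

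The missing ingredient is to route through $p_n$ itself, which is fixed by $F=f^{\peri(p_n)}$.
\begin{enumerate}
\item From $W^u(a)\transv W^s(p_n)\neq\varnothing$, iterate backwards by $f^{\peri(a)\peri(p_n)}$. This preserves both manifolds, so you get a point $y\in U\cap W^s(p_n)$ and a small disc $\Delta\subset U\cap W^u(a)$ transverse to $W^s(p_n)$ at $y$.
\item From $W^u(p_n)\transv W^s(b)\neq\varnothing$, iterate forward by $f^{\peri(b)\peri(p_n)}$. This gives a compact disc $K\subset W^u(p_n)$ that crosses $W^s(b)$ transversally inside $V$.
\item Since $p_n$ is a fixed point of $F$, the $\lambda$-lemma at $p_n$ says that $F^N(\Delta)$ contains discs $C^1$-close to $K$ for \emph{every} sufficiently large $N$. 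Hence $F^N(U)\cap V\neq\varnothing$ for all large $N$.
\end{enumerate}
Alternatively, you can use that the $\approx$-class is $F$-invariant, so $W^u(a)\transv W^s(F^r(b))\neq\varnothing$ for every $r$; this supplies the missing residues.

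A smaller omission affects both transitivity and mixing. These are statements about $f$ restricted to $HC(p_n)$ (respectively $HC^*(p_n)$), so your connecting points must lie in the piece, not merely in ambient open sets. They do, but you should say why. A transverse heteroclinic point $z$ between periodic points of one class lies on a heteroclinic cycle, so the $\lambda$-lemma shows it is non-wandering. Then $z\in NW(f)$, and since the pieces are closed, invariant, pairwise disjoint, and $\omega(z)\supset\cO(b)$, $z$ lies in the piece of $b$.
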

\begin{remark}
    For a hyperbolic periodic point $p$ in $NW(f)$ with $f$ Axiom A, $\cO(p)\subset \ehc^*(p)$ if and only if $\ehc^*(p)=\ehc(p)$, and in that case $f$ is topologically mixing on $\ehc^*(p)$.
\end{remark}

The ergodic homoclinic classes introduced in \cite{HHTU11} made it possible to establish an analogue of Smale's classical theorem for volume-preserving smooth diffeomorphisms $f$ on the Pesin region $\nuh(f)$. The Pesin region of $f$ is the set of points $x\in M$ such that 
$$\limsup_{n\to\infty}\frac{1}{n}\log\|Df^n(x)v\|\ne0\qquad\forall v\in T_xM\setminus\{0\}.$$
For a reminder of the Bernoulli property see \Cref{sec:Bernoulli}. 

\begin{theorem}[Ergodic Spectral Decomposition Theorem for the volume measure \cite{HHTU11}]\label{teo.ergodic.spectral}
Let $f$ be a $C^r$ diffeomorphism preserving a smooth volume $m$, $r>1$. Then there exist hyperbolic periodic points $p_1,\dots, p_N$ ($N\leq \infty$) such that  
$$\nuh(f)\zeroeq \bigcup_{n=1}^N\ehc(p_n), \qquad m(\ehc(p_i)\cap\ehc(p_j))=0\quad\forall i\ne j$$
Moreover,
\begin{enumerate}
    \item $f$ is ergodic on each $\ehc(p_n)$.
    \item $f^{\peri(p_n)}$ is Bernoulli on each $\ehc^*(p_n).$
\end{enumerate}
\end{theorem}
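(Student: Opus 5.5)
The plan follows the Hopf-argument strategy of \cite{HHTU11}. It has three stages: covering the Pesin region by ergodic homoclinic classes, ergodicity on each class, and Bernoullicity on each small class. Throughout I would use Pesin theory for $C^r$ diffeomorphisms, $r>1$. The two facts needed are that local stable and unstable manifolds have uniform size and uniformly continuous dependence on Pesin blocks, and that the Pesin stable and unstable laminations are absolutely continuous.

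\emph{Step 1: covering.} Fix a Pesin block $\Lambda_\ell\subset\nuh(f)$ with $m(\Lambda_\ell)>0$. By Katok's closing lemma, every recurrent density point $x\in\Lambda_\ell$ is $\delta$-shadowed by a hyperbolic periodic point $p$. The local manifolds $W^\pm_{\mathrm{loc}}(p)$ are then $C^1$-close to $W^\pm_{\mathrm{loc}}(x)$. Since all these local manifolds have size bounded below on $\Lambda_\ell$, every $y\in\Lambda_\ell$ close enough to $x$ satisfies
\[
W^-(y)\transv W^+(p)\ne\varnothing\quad\text{and}\quad W^+(y)\transv W^-(p)\ne\varnothing .
\]
Hence a neighbourhood of $x$ in $\Lambda_\ell$ lies in $\ehc^*(p)$. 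Compactness of $\overline{\Lambda_\ell}$ gives finitely many such $p$ for each $\ell$, hence countably many in total. Since $\nuh(f)\zeroeq\bigcup_\ell\Lambda_\ell$, this yields $\nuh(f)\zeroeq\bigcup_n\ehc(p_n)$. Each $\ehc(p)$ is $f$-invariant, because $x\sim p$ implies $f(x)\sim p$, as the relation only involves $\cO(p)$.

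\emph{Step 2: ergodicity (Hopf argument).} This is the main obstacle. Fix a continuous function $\varphi$, and let $\varphi^+$ and $\varphi^-$ be its forward and backward Birkhoff averages. These agree $m$-a.e., and $\varphi^+$ is constant along stable manifolds while $\varphi^-$ is constant along unstable manifolds. Take a typical $x\in\ehc(p)$, replacing $p$ by a point of its orbit if necessary. By absolute continuity, Lebesgue-almost every point of $W^+_{\mathrm{loc}}(x)$ is typical, meaning $\varphi^+=\varphi^-=c(x)$ there.

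Because $W^-(x)$ crosses $W^+(\cO(p))$ transversally, the inclination lemma gives discs in $f^{k}W^+(p)$ that $C^1$-accumulate on $W^+_{\mathrm{loc}}(x)$. Following the Pesin stable lamination through typical points, absolute continuity of holonomy then shows that $\varphi^+\equiv c(x)$ on a subset of positive leaf measure of some disc $D\subset W^+(\cO(p))$. The delicate point is that this lamination is only defined on Pesin blocks. I would handle it by working inside one block of positive measure and using the uniform size of local stable manifolds there.

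Then $\varphi^-$ is constant on $D$, and hence constant on all of $W^+(\cO(p))$. A second application of the same argument, using the crossing $W^+(x)\transv W^-(\cO(p))$, identifies the common value for every typical $x$. This gives $c(x)=c(y)$ for a.e.\ $x,y\in\ehc(p)$, so $f$ is ergodic on $\ehc(p)$. Disjointness now follows: if $m(\ehc(p_i)\cap\ehc(p_j))>0$, ergodicity forces $\ehc(p_i)\zeroeq\ehc(p_j)$, and we discard duplicates.

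\emph{Step 3: Bernoulli.} Set $k=\peri(p)$ and $F=f^{k}$, and let $\ehc^*(p)$ play the role of the class for $F$ with respect to the fixed point $p$. Running the Step~2 argument for $F^{j}$ for every $j\geq1$ shows that every power of $F$ is ergodic on $\ehc^*(p)$. For the K-property, I would use that the Pinsker $\sigma$-algebra is measurable with respect to both the stable and the unstable measurable partitions (Rokhlin--Sinai, in the Ledrappier--Young form). Its sets are therefore saturated by both Pesin laminations, and the Hopf argument of Step~2 shows they are trivial mod $0$. Finally, Pesin's theorem, or the Ornstein--Weiss/Chernov--Haskell criterion for smooth hyperbolic measures, upgrades the K-property to Bernoulli for $F|_{\ehc^*(p)}$.
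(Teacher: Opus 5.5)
The paper gives no proof of this theorem. It is quoted from \cite{HHTU11} and used only for the conservative remark, so the relevant comparison is with the HHTU argument. Your covering step and the plan of Step 3 are sound, but the Hopf argument in Step 2 does not close.

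What you legitimately obtain is that $\varphi^+\equiv c(x)$ on a set $E_x$ of positive leaf measure in a disc $D\subset W^+(\cO(p))$. The value of $\varphi^-$ on $W^+(\cO(p))$ is forced, however. Every point of $W^+(f^jp)$ is backward asymptotic to $f^jp$, so $\varphi^-\equiv\bar\varphi(p):=\frac{1}{\peri(p)}\sum_{i=0}^{\peri(p)-1}\varphi(f^ip)$ there. Absolute continuity only gives $\varphi^+=\varphi^-$ at leaf-almost every point of almost every unstable leaf, and $W^+(\cO(p))$ is a single null leaf that need not be typical.

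In fact the two averages generally disagree on $E_x$. Take $\varphi\geq 0$ vanishing on $\cO(p)$ with $\int_{\ehc(p)}\varphi\,dm>0$. Then $\varphi^-\equiv 0$ on $D$, while, if the theorem holds, $c(x)$ is the positive space average of $\varphi$ over $\ehc(p)$. So any identification of $c(x)$ through the values of $\varphi^{\pm}$ on $W^\pm(\cO(p))$ would prove something false. Your symmetric second application has the same defect, since $\varphi^+\equiv\bar\varphi(p)$ on $W^-(\cO(p))$. Nor can two typical points be compared directly: the sets $E_x,E_{x'}\subset W^+(\cO(p))$ need not meet, and $\varphi^+$ is not constant along unstable leaves. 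So $c(x)=c(x')$ is never reached.

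The missing idea is to apply the inclination lemma in the opposite direction. Use the periodic orbit only as a meeting place for \emph{typical} leaves, and combine the unstable crossing of one point with the stable crossing of another in a single chain. Given typical $x,x'$:
\begin{enumerate}
\item The crossing $W^+(x)\transv W^-(\cO(p))\ne\varnothing$ and the $\lambda$-lemma at the uniformly hyperbolic orbit of $p$ give forward iterates of a typical unstable disc of $x$ that $C^1$-accumulate on $W^+_{\mathrm{loc}}(\cO(p))$.
\item The crossing $W^-(x')\transv W^+(\cO(p))\ne\varnothing$, with continuity on a Pesin block, gives a positive-measure family of stable plaques of block points near $x'$ crossing $W^+(\cO(p))$. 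Their backward iterates $C^1$-accumulate on $W^-_{\mathrm{loc}}(\cO(p))$ while still crossing $f^{-m}W^+_{\mathrm{loc}}(x')$.
\item The holonomy $h$ of this iterated lamination is absolutely continuous, from $f^{-m}W^+_{\mathrm{loc}}(x')$ into the iterated disc of $x$. Both transversals are iterates of typical leaves, so some $w$ and $h(w)$ are both typical. This gives $\varphi^-(x')=\varphi^+(w)=\varphi^+(h(w))=\varphi^-(x)$.
\end{enumerate}
This is why HHTU need positive measure both for the points whose stable manifolds cross $W^+(\cO(p))$ and for those whose unstable manifolds cross $W^-(\cO(p))$.

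Step 3 runs the same mechanism, both for the powers $F^j$ and for the bi-saturated Pinsker sets, so it inherits the gap. Once Step 2 is repaired, total ergodicity of $f^{\peri(p)}$ on $\ehc^*(p)$ together with Pesin's theorem already yields the Bernoulli property. You should also add the easy reverse inclusion $\ehc(p)\subset\nuh(f)$ mod $0$. It follows because almost every point of $\ehc(p)$ has stable and unstable Pesin manifolds of complementary dimensions.
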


\begin{remark}\label{rmk:bernoulli}
For a hyperbolic periodic point $p$ such that $m(\ehc(p))>0$, we have $O(p)\subset\ehc^*(p)$ if and only if $\ehc^*(p)\zeroeq\ehc(p)$, and in that case,  $f|_{\ehc(p)}:\left(\ehc(p),m_{\ehc(p)}\right)\to \left(\ehc(p),m_{\ehc(p)}\right)$ is Bernoulli. 
\end{remark}

Theorem \ref{teo.ergodic.spectral} is not necessarily true for general hyperbolic measures, even on surfaces. For a general invariant measure, an ergodic homoclinic class does not determine a unique ergodic measure: 
a single ergodic homoclinic class may be of full measure with respect to two distinct ergodic hyperbolic measures. Such
measures are homoclinically related (see below), but need not coincide. This is the obstruction to obtaining, for arbitrary hyperbolic measures, a measure-independent spectral decomposition analogous to the volume-preserving one.

For surface diffeomorphisms, J.~Buzzi, S.~Crovisier, and
O.~Sarig \cite{BCS2022} define an equivalence relation on ergodic
hyperbolic measures. In terms of the ergodic homoclinic classes
introduced in \cite{HHTU11}, it can be formulated as follows.

Two $g$-ergodic hyperbolic measures $\mu_1$ and $\mu_2$ are {\em homoclinically related} (denoted by $\mu_1\sim \mu_2$) if there exists a hyperbolic periodic point $p$ such that
\begin{equation}\label{def:homoc.related.measures}
    \mu_1(\ehc(p))=\mu_2(\ehc(p))=1.
\end{equation}

This is equivalent to the definition in \cite[Definitions~2.9 and~2.10]{BCS2022}, by \cite[Proposition 2.15]{BCS2022}.\par

We will use the following.

\begin{theorem}[\cite{BCS2022}]\label{thm:BCS}
    Let $g$ be a $C^\infty$ surface diffeomorphism and let $\mu$ be an ergodic hyperbolic measure of maximal entropy for $g$. Then there exists a hyperbolic periodic point $p$ such that $\mu(\ehc(p))=1$, and moreover
    \begin{enumerate}
        \item if $\nu$ is an ergodic entropy-maximizing hyperbolic measure such that $\nu\sim\mu$, then $\nu=\mu$.
        \item $\mu$ is supported in $\overline{\ehc(p)}$. 
        \item 
        $(g,\mu)$ is isomorphic to the product of a Bernoulli scheme and a finite cyclic permutation.

    \end{enumerate}
\end{theorem}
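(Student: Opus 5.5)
This statement is cited from \cite{BCS2022}; the plan below reconstructs its proof from the ergodic homoclinic class machinery and the Sarig-type symbolic coding. We work throughout with the ergodic hyperbolic measure of maximal entropy $\mu$ of the $C^\infty$ surface diffeomorphism $g$.

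\emph{Existence of $p$.} By Katok's closing and horseshoe theorem applied to the ergodic hyperbolic measure $\mu$, there are hyperbolic horseshoes, and hence hyperbolic periodic points $p$, shadowing $\mu$-typical orbits. We take a Pesin block $\Lambda$ with $\mu(\Lambda)>0$ on which the local manifolds $W^\pm_{\mathrm{loc}}$ have uniform size and vary continuously. By Poincaré recurrence, a periodic point $p$ produced by closing a return to $\Lambda$ has local stable and unstable manifolds that are $C^1$-close to those of points of $\Lambda$. Hence
\[
W^+(x)\transv W^-(\cO(p))\neq\varnothing \quad\text{and}\quad W^-(x)\transv W^+(\cO(p))\neq\varnothing
\]
for all $x$ in a positive measure subset of $\Lambda$. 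Since $\ehc(p)$ is $g$-invariant, ergodicity gives $\mu(\ehc(p))=1$.

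\emph{Items (2) and (3).} Item (2) follows because the points of $\per_h$ homoclinically related to $p$ accumulate, via the inclination lemma, on every point of $\ehc(p)$, and $\operatorname{supp}\mu\subset\overline{\ehc(p)}$. For item (3), we use Sarig's countable Markov coding $\hat\pi:\Sigma\to M$ for $\chi$-hyperbolic measures, restricted to the irreducible component determined by the homoclinic class of $p$. The measure $\mu$ lifts to an ergodic shift-invariant measure of the same entropy on this component. An entropy-maximizing measure on an irreducible countable Markov shift is the unique Parry-type equilibrium measure (Gurevich). Such a measure is Bernoulli up to a finite period, by the Ornstein theory for Markov chains. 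The finite-to-one property of $\hat\pi$ on regular points then transfers this Bernoulli-times-rotation structure to $(g,\mu)$.

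\emph{Item (1), uniqueness, and the main obstacle.} Let $\nu\sim\mu$ be another ergodic hyperbolic measure of maximal entropy. Then $\nu(\ehc(p'))=1$ for some $p'$ that is also homoclinically related to $p$, so $\nu$ is carried by the same irreducible component of the coding. By Gurevich uniqueness of the measure of maximal entropy on a positive recurrent irreducible component, the lifts of $\mu$ and $\nu$ coincide, and hence $\mu=\nu$.

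The main obstacle is showing that the homoclinic relation really corresponds to a single irreducible component. This requires the full Buzzi--Crovisier--Sarig analysis: one must control the coding so that any two measures in the same ergodic homoclinic class lift to the same component. In particular, one must rule out a zero-entropy or transient loss of mass, which is exactly where $C^\infty$ is used, through Yomdin/Newhouse entropy continuity, to obtain positive recurrence.
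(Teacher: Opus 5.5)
The paper does not prove this statement; it imports it from \cite{BCS2022} (see also \Cref{rmk:bernoulli.mme}), so your text has to stand on its own as a reconstruction. The architecture you chose, Sarig's countable Markov coding plus Gurevich's theorem, is the right one. Three parts of your outline are essentially fine: the existence of $p$ (Katok closing on a Pesin block plus ergodicity), item (2), and item (3). Item (2) is immediate, since $\overline{\ehc(p)}$ is a closed set of full $\mu$-measure; the inclination lemma is only needed to identify the support with the topological homoclinic class. In (3), finite-to-one-ness on $\Sigma^\#$ is what bounds the Gurevich entropy of the component by $\htop(g)$, so that the ergodic lift $\hat\mu$ is its MME. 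The descent of the Bernoulli-times-rotation structure to $(g,\mu)$ comes instead from Ornstein's theorem that factors of Bernoulli shifts are Bernoulli, applied to the ergodic components of a suitable power.

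The genuine gap is item (1), which is the real content of the theorem. The step ``so $\nu$ is carried by the same irreducible component of the coding'' is asserted and then deferred to ``the full Buzzi--Crovisier--Sarig analysis''. Sarig's coding for $\chi$-hyperbolic measures (with $\chi$ below the exponents of both $\mu$ and $\nu$) is in general reducible. Nothing in its construction forces the components charged by the lifts $\hat\mu$ and $\hat\nu$ to coincide, and Gurevich's theorem then yields one MME per component, which is compatible with $\mu\neq\nu$. What is missing is an irreducible-coding argument producing paths in the Markov graph, in both directions, between symbols charged by $\hat\mu$ and symbols charged by $\hat\nu$. Its input is the homoclinic relation itself. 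For $\mu$-typical $x$ and $\nu$-typical $y$ (up to iterates), $\mu\sim\nu$ and the inclination lemma give $z\in W^+(x)\transv W^-(y)$, whose orbit shadows that of $x$ in the past and that of $y$ in the future. One must then show, using transversality at $z$, that this orbit carries a chain of double Pesin charts with tempered parameters across the transition, and that its code links the $\hat\mu$-component to the $\hat\nu$-component.

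Your closing diagnosis also misplaces the difficulty. Gurevich's theorem gives at most one MME on any irreducible countable Markov shift of finite Gurevich entropy, with no positive-recurrence hypothesis. Moreover, $\hat\mu$ and $\hat\nu$ are already invariant probability measures of entropy $\htop(g)$, so no loss-of-mass or Yomdin--Newhouse argument enters (1). Uniqueness within a homoclinic class already holds for $C^{1+\alpha}$ surface diffeomorphisms. The $C^\infty$ hypothesis is used in \cite{BCS2022} for existence and finiteness of MMEs, which this statement sidesteps by assuming $\mu$ hyperbolic.
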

\begin{remark}\label{rmk:bernoulli.mme}
 We have that $\ehc^*(p)\zeroeq\ehc(p)$ if and only if $\cO(p)\subset \ehc^*(p)$, and in that case $(g,\mu)$ is Bernoulli. 
\end{remark}
See \cite[Section 3.5]{BCS2022}.

\subsection{The Sard argument of HHTU}\label{sec:sard}
The following Sard argument was developed in \cite{HHTUSRB} for SRB measures, and is crucial in this paper. We state it here for area, to avoid excess of definitions. 

\begin{theorem}[RHRHTU-Sard's lemma \cite{HHTUSRB}]\label{thm:sard.HHTU} Let $g$ be a $C^{1+\alpha}$-diffeomorphism preserving an area probability measure $m$. Assume that $m(\nuh(g))>0$. Given $x\in\cR$, define:
$$\cC(x)=\{y\in \cR:W^-(y)\transv_{top}W^+(x)\ne\varnothing\},$$
the set of points whose stable manifolds have a topological crossing with the unstable manifold of $x$. See precise definition of $\transv_{top}$ in \Cref{def:topological.crossing}. \par
Let
$$\cT(x)=\{y\in \cC(x): W^-(y)\text{ has a non-transverse intersection with }W^+(x)\}.$$
    Then $$m(\cT(x))=0.$$
\end{theorem}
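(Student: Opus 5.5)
The plan is to prove the statement by a Fubini argument over a measurable family of unstable curves, combined with the classical Sard theorem applied to the stable foliation restricted to those curves. First I will reduce to a local statement. Since $\cR$ is a countable union of Pesin blocks $\Lambda_\ell$, on which local stable and unstable manifolds have uniform size and vary continuously in the $C^1$ topology, it is enough to show that for each $\ell$ and each small ball $B$, the set $\cT(x)\cap\Lambda_\ell\cap B$ has zero area. A non-transverse topological crossing of $W^-(y)$ with $W^+(x)$ at a point $z$ is carried by some iterate to a local configuration: replacing $y$ by $g^{n}(y)$ and $x$ by $g^{n}(x)$, a countable decomposition lets us assume that the tangency occurs between $W^-_{\mathrm{loc}}(y)$ and a compact piece $\gamma$ of $W^+(x)$. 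Since $m$ is $g$-invariant and $W^\pm$ are equivariant, the sets $\cT$ for different iterates are images of one another, and measure zero is preserved. For $y$ in a block, $W^-_{\mathrm{loc}}(y)$ is a $C^{1+\alpha}$ curve depending continuously on $y$. This gives a lamination $\mathcal L$ of a neighborhood by local stable curves through $\Lambda_\ell\cap B$.

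Next I will parametrize. Choose a small $C^1$ transversal $\sigma$ to $\mathcal L$, which is possible because stable directions vary continuously on the block. The lamination then becomes $\{L_t\}_{t\in K}$ for a compact $K\subset\sigma$. Each $y$ in $\Lambda_\ell\cap B$ lies on $L_{t(y)}$, and $y$ belongs to $\cT(x)$ exactly when $t(y)\in K'$, where
\[
K'=\{t\in K: L_t \text{ is tangent to } \gamma \text{ at a topological crossing}\}.
\]
Because the stable holonomy between transversals is absolutely continuous for $C^{1+\alpha}$ maps (Pesin), and area on the block disintegrates along $\mathcal L$ with transverse measure absolutely continuous with respect to Lebesgue on $\sigma$, it will suffice to show that $\mathrm{Leb}_\sigma(K')=0$.

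The key step, and the main obstacle, is to show that $K'$ is Lebesgue-null in $\sigma$. I would define the projection $h:\gamma\cap\bigcup_t L_t\to\sigma$ that sends $z\in L_t\cap\gamma$ to $t$. Where $\gamma$ meets the lamination transversally, $h$ is locally a bi-Lipschitz homeomorphism. A tangency point is precisely a place where "$h$ has zero derivative". The difficulty is that $\mathcal L$ is only a lamination with Hölder (though absolutely continuous) dependence, not a $C^1$ foliation, so the classical Sard theorem does not apply directly. I would first try to extend $\mathcal L$ locally to a continuous foliation by $C^{1+\alpha}$ curves whose tangent field is Lipschitz along $\gamma$ (Whitney-type extension from the compact set $K$). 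I would then show that $h$ satisfies $|h(z)-h(z')|\le o(|z-z'|)$ near tangency points. Covering the tangency set in $\gamma$ by small intervals then gives images of total length $o(1)$, so $h(\text{tangencies})$ has measure zero. This is Sard's argument in dimension one, which only needs that $h$ is Lipschitz with vanishing derivative on the critical set. The topological crossing hypothesis is used here: it guarantees that each $t\in K'$ really arises as $h$ of a point of $\gamma$, rather than from a curve merely approaching $\gamma$. If the extension step fails, the fallback is to use the uniform $C^{1+\alpha}$ bounds of curves in the block directly: at a tangency, $L_{t'}$ for nearby $t'$ separates from $\gamma$ quadratically, up to an error of order $|z-z'|^{1+\alpha}$. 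This still yields the $o(|z-z'|)$ estimate.

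Finally, I will combine the steps. Absolute continuity of the transverse measure gives $m(\cT(x)\cap\Lambda_\ell\cap B)=0$, and the countable unions over $\ell$, $B$, the iterates, and the compact pieces $\gamma$ exhaust $\cT(x)$. This yields $m(\cT(x))=0$.
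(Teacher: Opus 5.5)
Your reduction to Pesin blocks, the localization of the tangency inside a local stable plaque, and the use of absolute continuity of the stable lamination to reduce to $\mathrm{Leb}_\sigma(K')=0$ are fine and follow the structure the paper relies on. The gap is in the key step. The one-dimensional Sard covering needs $h$ to be Lipschitz with $|h(z)-h(z_0)|=o(|z-z_0|)$ at tangency points $z_0$. That is a statement about the \emph{transverse} regularity of $\mathcal L$, and none of your tools provides it.

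Uniform $C^{1+\alpha}$ bounds on the leaves only give that a point $z\in\gamma$ lies within $O(|z-z_0|^{1+\alpha})$ of $L_{t_0}$. To turn this into a bound on $|h(z)-t_0|$, you need the holonomy from a transversal at $z_0$ to $\sigma$ to be Lipschitz, and a Hölder lamination does not give that. For the same reason, your claim that $h$ is bi-Lipschitz where $\gamma$ crosses $\mathcal L$ transversally is unjustified. The Whitney extension does not help either. A Lipschitz or Whitney-$C^1$ extension of the tangent field exists only if the field already has that regularity on the lamination; Hölder data extend only to Hölder fields, whose integral curves need not even be unique.

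In fact, uniform leaf bounds, Hölder tangents and absolutely continuous holonomies are jointly insufficient. Take the foliation of $\R^2$ by the translates $y=(x-s)^3$, $s\in\R$. Its tangent slope is $3y^{2/3}$ and its holonomies between vertical lines are absolutely continuous. Yet every leaf is tangent to the $x$-axis at a crossing point, so the tangent leaves have full transverse measure.

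The missing ingredient is the one the paper imports from \cite[pp.~47--48]{HHTUSRB}: on a surface, the stable lamination over a Pesin block is $C^1$ transversely, so its holonomies are $C^1$. The projection along leaves from $\gamma$ to a transversal is then a $C^1$ map whose critical points are exactly the tangencies. The classical Sard theorem in dimension one gives $\mathrm{Leb}_\sigma(K')=0$, and absolute continuity gives $m(\cT(x))=0$.

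If you want to avoid the $C^1$ statement, you must at least use the uniform two-sided bounds on holonomy Jacobians that Pesin's absolute continuity theorem provides on a block (precisely what fails in the cubic example). You must also replace lengths of images by measures, as follows.
Uniform continuity of the leaf tangents shows that the leaves through $\gamma\cap B_\ell(z_0)$ cross the perpendicular transversal at $z_0$ within $o(\ell)$ of $z_0$. Hence $\mathrm{Leb}_\sigma\bigl(K\cap h(\gamma\cap B_\ell(z_0))\bigr)=o(\ell)$ uniformly in $z_0$. A Vitali covering of the tangency set then concludes.

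Finally, $K'$ should consist of all $t$ for which $L_t$ is tangent to $\gamma$ somewhere. The non-transverse intersection in the definition of $\cT(x)$ need not occur at a crossing point, and the crossing hypothesis plays no role in the Sard step.
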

This theorem will be used to prove the conservative version of \Cref{mainthm:isomorphism.mu}. See \Cref{rem:conservative}.\par
For the proof of \Cref{mainthm:isomorphism.mu}, we will use the adapted version of the theorem above to measures of maximal entropy. 
\begin{theorem}[Adapted Sard's lemma \cite{BCS2022}]\label{thm:sard.BCS}
 Let $g$ be a $C^{\infty}$-diffeomorphism, and let $\mu$ be a measure of maximal entropy of $g$. Assume that $\mu(\nuh(g))>0$. Given $x\in\cR$, define:
$$\cC(x)=\{y\in \cR:W^-(y)\transv_{top}W^+(x)\ne\varnothing\},$$
the set of points whose stable manifolds have a topological crossing with the unstable manifold of $x$. See precise definition of $\transv_{top}$ in \Cref{def:topological.crossing}. \par
Let
$$\cT(x)=\{y\in \cC(x): W^-(y)\text{ has a non-transverse intersection with }W^+(x)\}.$$
Let $d_H$ denote the transverse Hausdorff dimension. Then $$d_H(\cT(x))=0.$$    
\end{theorem}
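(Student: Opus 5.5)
Fix $x\in\cR$ and a compact piece $\gamma$ of $W^+(x)$; since $\cT(x)$ is a countable union over such pieces and over Pesin blocks, it suffices to bound the transverse dimension of $\cT(x)\cap\Lambda$. Here $\Lambda$ is a Pesin block of $\mu$ with uniform constants: local stable curves of size $\delta$, $C^\infty$ with uniformly bounded $C^k$ norms, uniform contraction rate $e^{-\lambda n}$ along $W^-$, and continuous dependence of the leaves on the point. By the product-structure/holonomy picture, the leaves $W^-_{\mathrm{loc}}(y)$, $y\in\Lambda$, that cross $\gamma$ are parametrized by a transverse coordinate $t$ on a fixed transversal. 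The transverse Hausdorff dimension of $\cT(x)\cap\Lambda$ is the Hausdorff dimension of the set $T$ of parameters $t$ whose leaf is tangent to $\gamma$ somewhere. The goal is to show $\dim_H T\le \eps/\lambda$ for every $\eps>0$.

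The first point is that the naive route of writing $\tau(s)=$ coordinate of the leaf through $\gamma(s)$ and applying a one-dimensional Sard theorem fails. Tangency points are exactly the critical points of $\tau$, but $\tau$ is only Hölder, since the stable lamination is only Hölder transversally. Instead I will \emph{iterate forward}, following the mechanism of \cite{HHTUSRB} with the entropy input of \cite{BCS2022}.

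Let $z\in\gamma$ be a tangency point with the leaf of some $y\in\Lambda$, so $T_z\gamma=E^-(y')$ at the point $y'$ of that leaf. Then $\|Dg^n|_{T_z\gamma}\|\lesssim e^{-\lambda n}$, and this persists on a neighbourhood in $\gamma$ whose size is controlled by the $C^2$ norms. So for each $n$ the tangency points lie in the subset $\gamma_n\subset\gamma$ where the \emph{curve} $g^n\gamma$ has contracted. Apply the Yomdin--Gromov $C^\infty$ reparametrization lemma, in the form used in \cite{BCS2022} for $C^\infty$ surface diffeomorphisms, to the family $g^k\gamma$, $0\le k\le n$. For any $\eps>0$ and $r$ large, the set $\gamma_n$ can be covered by at most $C_\eps e^{\eps n}$ reparametrization charts $\psi:[0,1]\to\gamma$ with $\|g^n\circ\psi\|_{C^r}\le 1$ and $\|(g^n\psi)'\|\le e^{-\lambda n/2}$.

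The key point is the counting step. The curve $\gamma$ has \emph{no} entropy in the "contracting" directions, since the local volume growth of contracted pieces vanishes for $C^\infty$ maps. This is exactly where $C^\infty$ (rather than $C^r$) and the maximal-entropy setting of \cite{BCS2022} enter.

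Each such chart $\psi$ has image $g^n\psi([0,1])$ of length at most $e^{-\lambda n/2}$. Pulling back by $g^{-n}$, the stable leaves of $\Lambda$ that can be tangent to $\gamma$ inside $\psi([0,1])$ all pass within distance $\lesssim e^{-\lambda n/2}$ of a single point of $g^n\gamma$. Because the stable distances along the orbit are uniformly comparable on $\Lambda$ (using $g^n y\in\Lambda$ for a positive-density set of $n$, and restricting to such $n$), these leaves occupy a transverse interval of $t$-width $\lesssim e^{-c\lambda n}$ for some uniform $c>0$. Hence $T$ is covered by $C_\eps e^{\eps n}$ intervals of length $e^{-c\lambda n}$, for infinitely many $n$, giving $\dim_H T\le \eps/(c\lambda)$. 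Letting $\eps\to0$ yields $d_H(\cT(x))=0$.

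The main obstacle is this middle step, namely converting "curve contracted by $g^n$" into "transverse width of the family of stable leaves is exponentially small". It requires comparing the Hölder stable lamination with the smooth reparametrized curves at scale $e^{-\lambda n}$, with constants uniform over the Pesin block. I would first try to handle it by only using times $n$ with $g^n(y)\in\Lambda$ and working in Lyapunov charts, where the stable leaves become uniformly $C^r$-close to horizontal lines.
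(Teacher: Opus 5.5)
Your detour rests on a premise that is false here, and that is the missing idea. On a surface, the stable lamination of a Pesin block is codimension one, and on each block it is $C^1$ transversally (the paper cites \cite[pp.~47--48]{HHTUSRB}). Its holonomies are therefore Lipschitz, not merely H\"older. The paper's proof applies the Sard-type lemma \cite[Theorem~4.2]{BCS2022} on each Pesin block with exactly these inputs: $C^\infty$ leaves, the $C^\infty$ curve $W^+(x)$, and Lipschitz holonomies. It then exhausts $\cC(x)$ by increasing Pesin blocks. You are right that one-dimensional Sard for a merely $C^1$ function $\tau$ would only make the critical values Lebesgue-null, not of dimension zero. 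But the paper never runs that naive argument, so this does not justify abandoning transverse regularity. Note also that the maximal-entropy hypothesis plays no role in bounding $d_H(\cT(x))$. It enters only afterwards, when dimension zero is turned into $\mu$-measure zero via the positive transverse dimension of $\mu$.

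As a substitute, your forward-iteration argument is not yet a proof, because both load-bearing steps are only asserted.

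First, the counting claim is not justified as formulated. The set $\gamma_n$, defined by contraction at time $n$ alone, also contains tangencies with stable leaves outside $\Lambda$, and points whose derivative along $\gamma$ grows before it decays. Applied to the family $g^k\gamma$, $0\le k\le n$, the Yomdin--Gromov/Burguet reparametrization bounds grow with such intermediate expansion, so no subexponential count follows for that set. You would have to restrict to points with $\|Dg^j|_{T_z\gamma}\|\le C_\Lambda e^{-\lambda j}$ for all $0\le j\le n$, which tangencies with leaves of $\Lambda$ do satisfy. You would then still have to absorb the per-step constant of the reparametrization lemma by passing to an iterate at small scale before letting $r\to\infty$. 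The phrase ``contracted pieces carry no entropy'' is a heuristic, not a citable statement.

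Second, the conversion to transverse width, which you flag yourself, is left open, and your proposed fix does not work as stated. The times with $g^n(y)\in\Lambda$ depend on $y$, so at a fixed $n$ you cannot use them for all leaves in a chart. You would need either tempered Lyapunov charts along the orbits, or a cover $T\subset\bigcup_{n\ge N}T_n$ with $\sum_{n\ge N}C_\eps e^{\eps n}e^{-sc\lambda n}\to 0$.

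If both steps were carried out, you would have a genuinely different, more dynamical proof that needs only H\"older holonomies. As written, however, it is a program rather than a proof, and the Lipschitz regularity available in dimension two makes it unnecessary.
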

This follows from \cite[Theorem 4.2]{BCS2022}, by taking increasingly large Pesin blocks in $\cC(x)$. On each Pesin block, the stable leaves $W^-(y)$ form a codimension-one lamination with \(C^\infty\) leaves that is \(C^1\) transversely \cite[pp.~47--48]{HHTUSRB}; consequently, its holonomy maps are \(C^1\), and hence Lipschitz.
\subsection{Pseudo-Anosov factors}
\label{ssec:pseusoanosovfactors}

Let $g$ be a $C^r$ diffeomorphism, $r>1$, which is semiconjugate to a pseudo-Anosov map $f$ by a semiconjugacy $\pi$. Then for every $x$,
\[
\pi(W^+(x;g))\subset W^u(\pi(x);f),\qquad  \pi(W^-(x;g))\subset W^s(\pi(x);f). 
\]
No regularity of $\pi$ beyond continuity is required here.

\smallskip 
   
\noindent\textbf{Notation:} for a surface map $f:M\to M$ we denote by $\widetilde f$ a lift of $f$ to the universal cover $\widetilde M$ of $M$ (which we identify with $\R^2$).

\smallskip 
 
We also include the following proposition for later use. Suppose that $\pi$ is homotopic to the identity. Let
\[
\widetilde\pi:\widetilde M\to\widetilde M
\]
be the lift of $\pi$ obtained by lifting a homotopy from $\Id_M$ to $\pi$, starting with $\Id_{\widetilde M}$. Choose compatible lifts $\widetilde f,\widetilde g$ of $f,g$ such that
\[
\widetilde\pi\circ\widetilde g =\widetilde f\circ\widetilde\pi.
\]

\begin{proposition}\label{prop.semiconj}
For any $x,y\in M$, we have $\pi(x)=\pi(y)$ if and only if there exist lifts $\widetilde x,\widetilde y$ of $x,y$ such that
\[
\sup_{n\in\mathbb Z} d\bigl(\widetilde g^n(\widetilde x),\widetilde g^n(\widetilde y)\bigr)<\infty.
\]
\end{proposition}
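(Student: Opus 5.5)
The approach is to compare both $\widetilde\pi$ and the lifted dynamics with the identity and the pseudo-Anosov dynamics on $\widetilde M$.

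\textbf{Step 1: $\widetilde\pi$ is a bounded distance from the identity.} Since $\widetilde\pi$ is obtained by lifting a homotopy from $\Id_M$ to $\pi$, the displacement $\widetilde z\mapsto d(\widetilde\pi(\widetilde z),\widetilde z)$ is invariant under deck transformations. It is therefore a continuous function on the compact surface $M$. Hence there is a constant $C$ with
\[
d(\widetilde\pi(\widetilde z),\widetilde z)\le C \qquad\text{for all } \widetilde z\in\widetilde M.
\]

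\textbf{Step 2: the ``only if'' direction.} Suppose $\pi(x)=\pi(y)$. Take any lift $\widetilde x$ of $x$. The point $\widetilde\pi(\widetilde x)$ lifts $\pi(x)=\pi(y)$, and $\widetilde\pi$ commutes with deck transformations. So we can choose the lift $\widetilde y$ of $y$ with $\widetilde\pi(\widetilde y)=\widetilde\pi(\widetilde x)$. By the relation $\widetilde\pi\circ\widetilde g=\widetilde f\circ\widetilde\pi$, for all $n\in\mathbb Z$,
\[
\widetilde\pi(\widetilde g^n\widetilde x)=\widetilde f^n\widetilde\pi(\widetilde x)=\widetilde f^n\widetilde\pi(\widetilde y)=\widetilde\pi(\widetilde g^n\widetilde y).
\]
Applying Step 1 to both points and using the triangle inequality gives $d(\widetilde g^n\widetilde x,\widetilde g^n\widetilde y)\le 2C$ for every $n$.

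\textbf{Step 3: the ``if'' direction.} Suppose $\sup_n d(\widetilde g^n\widetilde x,\widetilde g^n\widetilde y)<\infty$. By Step 1, the points $\widetilde u=\widetilde\pi(\widetilde x)$ and $\widetilde v=\widetilde\pi(\widetilde y)$ satisfy $\sup_n d(\widetilde f^n\widetilde u,\widetilde f^n\widetilde v)<\infty$. It remains to show this forces $\widetilde u=\widetilde v$. This is the expansivity of the lifted pseudo-Anosov map: two points of $\widetilde M$ whose full $\widetilde f$-orbits stay at bounded distance coincide.

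\textbf{Main obstacle: justifying expansivity of $\widetilde f$.} I would argue with the lifted measured foliations $\widetilde{\mathcal F}^s$ and $\widetilde{\mathcal F}^u$, equipped with their transverse measures $\mu^s,\mu^u$ (the singular flat metric $|dx|+|dy|$). On $\widetilde M$ the leaves are quasi-geodesic trees and there are no half-leaf bigons. Consequently, any two points can be joined by a path made of finitely many stable and unstable segments, and the transverse measure between them gives a genuine length.

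\emph{Stable pass:} if $\widetilde u\neq\widetilde v$ lie off a common stable leaf, then their $\mu^u$-transverse separation is multiplied by $\lambda>1$ under forward iteration. Since the flat metric is quasi-isometric to the lifted Riemannian metric, this contradicts boundedness of the forward orbits.

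\emph{Unstable pass:} the same argument under backward iteration rules out separation measured by $\mu^s$.

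The delicate point is that ``transverse distance'' must be well defined in the presence of singular leaves. I would handle this using the uniqueness of intersection of lifted stable and unstable leaves, which the outline already invokes. If available, I would instead simply cite the standard fact that $\widetilde f$ is expansive at infinity, as in Handel's setup.
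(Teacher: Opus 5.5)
Your proposal is correct and follows the paper's proof essentially step for step. The paper also bounds the displacement of $\widetilde\pi$ using deck-equivariance, and gets the $2C$ bound for the forward implication. For the converse it transfers boundedness to the $\widetilde f$-orbits of $\widetilde\pi(\widetilde x),\widetilde\pi(\widetilde y)$ and then simply cites Handel's 1985 result that distinct orbits of a lifted pseudo-Anosov map cannot stay at bounded distance, which is the fallback you mention instead of your transverse-measure sketch.
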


\begin{proof}
By uniqueness of the lift, $\widetilde\pi$ is equivariant under deck transformations. Since deck transformations are isometries, the displacement function
\[
\widetilde x\in\widetilde M \longmapsto d\bigl(\widetilde\pi(\widetilde x),\widetilde x\bigr)
\]
is invariant under deck transformations. It therefore descends to $M$ and, by compactness, is uniformly bounded. Thus there exists
$K>0$ such that
\[
d\bigl(\widetilde\pi(\widetilde x),\widetilde x\bigr)\leq K
\]
for every $\widetilde x\in\widetilde M$.

Suppose first that $\pi(x)=\pi(y)$, and choose lifts $\widetilde x,\widetilde y$ such that
\[
\widetilde\pi(\widetilde x)=\widetilde\pi(\widetilde y).
\]
For every $n\in\mathbb Z$, the compatibility of the lifts implies
\[
\widetilde\pi\circ\widetilde g^n=\widetilde f^n\circ\widetilde\pi.
\]
Consequently,
\begin{align*}
d\bigl(\widetilde g^n(\widetilde x),\widetilde g^n(\widetilde y)\bigr)\leq
d\bigl(\widetilde g^n(\widetilde x),\widetilde\pi(\widetilde g^n(\widetilde x))\bigr)+
d\bigl(\widetilde\pi(\widetilde g^n (\widetilde y)),
       \widetilde g^n(\widetilde y)\bigr)\leq 2K.
\end{align*}
Hence, the two lifted orbits remain at uniformly bounded distance.

Conversely, suppose that
\[
C:=\sup_{n\in\mathbb Z}d\bigl(\widetilde g^n(\widetilde x),\widetilde g^n(\widetilde y)\bigr)<\infty.
\]
Then, for every $n\in\mathbb Z$,
\begin{align*}
d\bigl(\widetilde f^n(\widetilde\pi(\widetilde x)),
       \widetilde f^n(\widetilde\pi(\widetilde y))\bigr)
=d\bigl(\widetilde\pi(\widetilde g^n(\widetilde x)),
       \widetilde\pi(\widetilde g^n(\widetilde y))\bigr)\leq C+2K.
\end{align*}
Distinct orbits of a lift of a pseudo-Anosov homeomorphism cannot remain at bounded distance for all $n\in\mathbb Z$
\cite{Handel_1985}. Therefore,
\[
\widetilde\pi(\widetilde x)=\widetilde\pi(\widetilde y),
\]
and hence $\pi(x)=\pi(y)$.
\end{proof}

\begin{remark}
If $g$ is homotopic to the pseudo-Anosov map $f$ and
$h_{\mathrm{top}}(g)=h_{\mathrm{top}}(f)$, then Handel's theorem
\cite{handel1988} gives a semiconjugacy homotopic to the identity.
\end{remark}

\subsection{Large measures}
\label{ssec:largemeasures}
Let $\pi$ be the semiconjugacy from $g$ to $f$ given by Handel \cite{handel1988}. 
\begin{definition}\label{def:largemeasures}
Let $g$ be a map semiconjugate to a pseudo-Anosov map $f$ by a semiconjugacy $\pi$. An invariant measure $\mu$ for $g$ is
\begin{enumerate}
    \item {\em large} if $\pi_*\mu$ is ergodic for $f$ and $h_{\pi_*\mu}(f)>0$,
    \item {\em entropy-large} if $\pi_*\mu=\MME$ is the measure of maximal entropy of $f$.
\end{enumerate}
\end{definition}

 Fix an ergodic $f$-invariant measure $\nu$, and denote
 \[
    L(\nu)=\{\mu:g_{*}\mu=\mu, \pi_{*}\mu=\nu\}.
\]
\begin{proposition}
    $L(\nu)$ is a nonempty convex set, which contains at least one ergodic measure.
\end{proposition}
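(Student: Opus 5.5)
Fix an ergodic $f$-invariant probability measure $\nu$ and consider $L(\nu)$ inside the space $\mathcal M(g)$ of $g$-invariant Borel probability measures, with the weak-$*$ topology. The argument has three steps.

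\textbf{Convexity and compactness.} Convexity is immediate, since both conditions $g_*\mu=\mu$ and $\pi_*\mu=\nu$ are affine in $\mu$. Because $\pi$ is continuous, the map $\pi_*$ is weak-$*$ continuous. So $L(\nu)$ is the intersection of the compact convex set $\mathcal M(g)$ with a closed affine condition, and is therefore compact and convex.

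\textbf{Nonemptiness.} This is the only step needing a construction. Since $\pi$ is a continuous surjection of the compact space $M$, there is a Borel measurable section $s\colon M\to M$ with $\pi\circ s=\Id$. For instance, take $s(y)$ to be a point of the compact fiber $\pi^{-1}(y)$ chosen measurably by the Kuratowski--Ryll-Nardzewski selection theorem; the fiber map $y\mapsto\pi^{-1}(y)$ is upper semicontinuous, hence Borel. Put $\mu_0=s_*\nu$, so that $\pi_*\mu_0=\nu$. Then form the Cesàro averages
\[
\mu_n=\frac1n\sum_{k=0}^{n-1}g^k_*\mu_0 .
\]
Each $\mu_n$ satisfies $\pi_*\mu_n=\frac1n\sum_k f^k_*\pi_*\mu_0=\nu$, using $\pi g^k=f^k\pi$ and the $f$-invariance of $\nu$. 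Any weak-$*$ limit point $\mu$ is $g$-invariant, as in the Krylov--Bogolyubov argument. By continuity of $\pi_*$, it also satisfies $\pi_*\mu=\nu$. Hence $L(\nu)\neq\varnothing$.

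Alternatively, one can avoid the section entirely. The set $\{\mu_0\in\mathcal M(M):\pi_*\mu_0=\nu\}$ is nonempty because $\pi_*\colon\mathcal M(M)\to\mathcal M(M)$ has compact convex image containing all Dirac masses $\delta_y=\pi_*\delta_x$ with $\pi(x)=y$. That image is therefore everything, by Krein--Milman or by density of finitely supported measures together with closedness.

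\textbf{An ergodic element.} By Krein--Milman, $L(\nu)$ has an extreme point $\mu$. I claim $\mu$ is ergodic. Suppose instead that $\mu=t\mu_1+(1-t)\mu_2$ with $0<t<1$, where $\mu_1,\mu_2$ are distinct $g$-invariant measures; for example, take them to be the normalized restrictions of $\mu$ to a nontrivial invariant set and its complement. Then
\[
\nu=t\,\pi_*\mu_1+(1-t)\,\pi_*\mu_2 .
\]
Here $\pi_*\mu_i$ are $f$-invariant, and ergodic measures are extreme in $\mathcal M(f)$, so $\pi_*\mu_1=\pi_*\mu_2=\nu$. Thus $\mu_1,\mu_2\in L(\nu)$, contradicting extremality of $\mu$ in $L(\nu)$. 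Hence $\mu$ is ergodic.

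\textbf{Main obstacle.} The only real obstacle is producing the initial lift $\mu_0$ with $\pi_*\mu_0=\nu$. I would present this via the surjectivity of $\pi_*$ on probability measures, which is cleaner than invoking a measurable-selection theorem.
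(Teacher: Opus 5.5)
Your proof is correct. Its first half is the paper's argument: the paper also takes a Borel section $s$ of $\pi$, sets $\mu_0=s_*\nu$, and uses Cesàro averages along $g$ to produce an element of $L(\nu)$. Your section-free alternative, via surjectivity of $\pi_*$ on all probability measures, is also valid and avoids measurable selection.

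The routes diverge at the ergodic element. The paper takes the ergodic decomposition $\mu=\int\mu_\omega\,d\tau(\omega)$ of the constructed $\mu$ and pushes it forward to get $\nu=\int\pi_*\mu_\omega\,d\tau(\omega)$. Extremality of $\nu$ then gives $\pi_*\mu_\omega=\nu$ for $\tau$-a.e.\ $\omega$. You instead take an extreme point of the compact convex set $L(\nu)$ by Krein--Milman and show it is ergodic, using the same splitting argument as \Cref{lemma.medidas.invariantes}.

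\textbf{What your route buys.} It needs only Krein--Milman and the elementary fact that a non-ergodic invariant measure splits along an invariant set. It also shows structurally that $L(\nu)$ is a closed face of the compact convex set of $g$-invariant measures, so its extreme points are exactly its ergodic members.

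\textbf{What the paper's route buys.} At the cost of invoking the ergodic decomposition theorem, it proves that almost every ergodic component of \emph{any} element of $L(\nu)$ lies in $L(\nu)$. This stronger form is what the paper actually uses later. In \Cref{claim.non.collapsible.general}, the ergodic components of $\mu_{\cC}$ must again project to $\nu$. In the proof of \Cref{prop:Bernoulli}, every ergodic component of an entropy-large measure must again be entropy-large. A bare existence statement from Krein--Milman does not supply this. You would have to add the integral version of your face argument, which is exactly the paper's step.
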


\begin{proof}
    Convexity is clear.

    The map $\pi$ is a continuous surjection between compact metric spaces and therefore has a Borel section $s$. Define $\mu_0=s_*\nu$. For every $n\geq 1$, $\frac{1}{n}\sum_{k=0}^{n-1}g_{*}^k\mu_0$ projects to $\nu$ under $\pi$, and therefore any accumulation point $\mu$ of these averages is $g$-invariant and projects to $\nu$ as well. If 
    \[
    \mu=\int\mu_\omega\,d\tau(\omega)
    \]
    is the ergodic decomposition of $\mu$ (see \cite[Theorem 6.10 and p.~153]{Walters1982}), then
    \[
    \nu=\pi_*\mu=\int\pi_*\mu_\omega\,d\tau(\omega).
    \]
    Since $\nu$ is ergodic and hence an extreme invariant measure, we have $\pi_*\mu_\omega=\nu$ for $\tau$-almost every $\omega$.
\end{proof}

We now establish that large measures are necessarily hyperbolic. We start with a simple remark.

\begin{lemma}\label{lemma.medidas.invariantes}
Let $(g,Y,\mu)$ and $(f,X,\nu)$ be measure-preserving systems such that $\nu$ is ergodic, and let $\pi:(g,Y,\mu)\to (f,X,\nu)$ be a metric homomorphism. Then, for every $g$-invariant set $I$ such that $\mu(I)>0$, we have $\pi_*\mu_I=\nu$.
\end{lemma}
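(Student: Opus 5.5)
The plan is to show that $\pi_*\mu_I$ is an $f$-invariant probability measure that is absolutely continuous with respect to $\nu$. Ergodicity of $\nu$ will then force it to equal $\nu$.

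First, the invariance. Since $I$ is $g$-invariant ($g^{-1}I\zeroeq I$) and $\mu$ is $g$-invariant, the conditional measure $\mu_I$ is $g$-invariant, as already noted in the preliminaries. The relation $\pi\circ g=f\circ\pi$ holds $\mu$-almost everywhere, because $\pi$ is a metric homomorphism. Hence for every measurable $A\subset X$,
\[
\pi_*\mu_I(f^{-1}A)=\mu_I\bigl(\pi^{-1}f^{-1}A\bigr)=\mu_I\bigl(g^{-1}\pi^{-1}A\bigr)=\mu_I\bigl(\pi^{-1}A\bigr)=\pi_*\mu_I(A),
\]
so $\pi_*\mu_I$ is $f$-invariant.

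Next, absolute continuity. For every measurable $A$,
\[
\pi_*\mu_I(A)=\frac{\mu(\pi^{-1}A\cap I)}{\mu(I)}\le \frac{\mu(\pi^{-1}A)}{\mu(I)}=\frac{\nu(A)}{\mu(I)}.
\]
So $\pi_*\mu_I\le \mu(I)^{-1}\nu$, and in particular $\pi_*\mu_I\ll\nu$.

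Finally, the conclusion from ergodicity. Let $\phi=d(\pi_*\mu_I)/d\nu$, which lies in $L^\infty(\nu)$ by the bound above. Because both measures are $f$-invariant and $f$ preserves $\nu$, $\phi\circ f=\phi$ holds $\nu$-almost everywhere. To check this, compare the two measures on sets $f^{-1}A$ to get $\int_A\phi\circ f^{-1}\,d\nu=\int_A\phi\,d\nu$ when $f$ is invertible, or use the standard argument via conditional expectation on the invariant $\sigma$-algebra. By ergodicity $\phi$ is $\nu$-a.e. constant, and since both are probability measures that constant is $1$. Alternatively, one can avoid densities: an ergodic measure is extreme in the set of invariant probabilities, and $\nu=\mu(I)\,\pi_*\mu_I+\mu(I^c)\,\pi_*\mu_{I^c}$ is a convex combination of invariant probabilities (the case $\mu(I^c)=0$ being trivial). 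Extremality then gives $\pi_*\mu_I=\nu$. This second route is cleaner and is the one I would write.

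The only mildly delicate point is making the decomposition $\mu=\mu(I)\mu_I+\mu(I^c)\mu_{I^c}$ and the invariance of $\mu_{I^c}$ precise. This follows because $I^c$ is also $g$-invariant mod $0$. Beyond that there is no real obstacle.
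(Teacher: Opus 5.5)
Your proposal is correct, and the route you say you would write is exactly the paper's proof: decompose $\nu=\mu(I)\,\pi_*\mu_I+\mu(I^c)\,\pi_*\mu_{I^c}$ into $f$-invariant probabilities and use that the ergodic measure $\nu$ is an extreme point. Your explicit check that $\pi_*\mu_I$ is $f$-invariant simply fills in a step the paper states without comment, and the Radon--Nikodym density route is also valid but unnecessary.
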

\begin{proof}
 Set $t=\mu(I)$. If $t=1$, there is nothing to prove. Otherwise,
\[
\mu=t\mu_I+(1-t)\mu_{I^c}.
\]
Since $I$ is invariant, both $\mu_I$ and $\mu_{I^c}$ are invariant.
Therefore,
\[
\nu=\pi_*\mu=t\,\pi_*\mu_I+(1-t)\,\pi_*\mu_{I^c}.
\]
Both measures on the right are $f$-invariant. Since $\nu$ is ergodic, it is an extreme point of the convex set of invariant
probability measures. Therefore, $\pi_*\mu_I=\nu$.
\end{proof}

\begin{proposition}\label{prop:nuh}
    Let $g$ be a $C^r$ diffeomorphism of a surface $M$, $r\geq 1$, preserving a measure $\mu$, and let $f$ be a homeomorphism on a set $X$ such that $g$ is semiconjugate to $f$ via a continuous map $\pi:M\to X$.
    
    If $\pi_*\mu$ is ergodic and $h_{\pi_*\mu}(f)>0$, then $\mu$ is hyperbolic.  
\end{proposition}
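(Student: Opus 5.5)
The plan is to argue by contradiction, using the Ruelle inequality on an invariant piece of $\mu$ where hyperbolicity fails. On a surface the Lyapunov exponents satisfy $\lambda^-(x)\le\lambda^+(x)$. Hyperbolicity fails exactly on the $g$-invariant measurable set
\[
I=\{x:\lambda^+(x)\le 0\}\cup\{x:\lambda^-(x)\ge 0\}.
\]
Suppose $\mu(I)>0$ and, say, $I_+=\{x:\lambda^+(x)\le 0\}$ has positive measure. This set is $g$-invariant because the exponents are invariant functions. By \Cref{lemma.medidas.invariantes}, applied with $\nu=\pi_*\mu$ ergodic, we get $\pi_*\mu_{I_+}=\nu$.

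Next we use that entropy does not increase under factors. Since $\pi$ is a semiconjugacy carrying $\mu_{I_+}$ to $\nu$, we have
\[
h_{\mu_{I_+}}(g)\ \ge\ h_{\nu}(f)>0.
\]
On the other hand, the Ruelle inequality for the $C^1$ diffeomorphism $g$ gives
\[
h_{\mu_{I_+}}(g)\le \int \bigl(\lambda^+\bigr)^+\,d\mu_{I_+}.
\]
The inequality is applied to the invariant measure $\mu_{I_+}$, where one sums the positive parts of the exponents. On $I_+$ both exponents are $\le 0$, so the right-hand side is $0$. This is a contradiction.

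The case $I_-=\{x:\lambda^-(x)\ge 0\}$ is symmetric. Apply the same argument to $g^{-1}$, which is semiconjugate to $f^{-1}$ by the same $\pi$. Its exponents are $-\lambda^\mp$, and $h_\nu(f^{-1})=h_\nu(f)$. Thus $\mu(I)=0$, which means $\lambda^-<0<\lambda^+$ holds $\mu$-a.e., and $\mu$ is hyperbolic.

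The only delicate point is bookkeeping: one has to make sure the Ruelle inequality is applied to the possibly non-ergodic measure $\mu_{I_+}$. This is handled either by its integrated form, or by ergodic decomposition combined with the fact that a.e. ergodic component also projects to $\nu$, as in the proof that $L(\nu)$ contains ergodic measures. Also, the case $\lambda^-=\lambda^+$ (no Oseledets splitting) is already covered, since then both exponents are $\le0$ or both are $\ge0$.
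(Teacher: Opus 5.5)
Your argument is correct and is essentially the paper's proof: you restrict $\mu$ to the invariant set $\{\lambda^+\le 0\}$, note that the normalized restriction still projects to $\pi_*\mu$, and play Ruelle's inequality against the fact that entropy cannot increase under a factor, then repeat with $g^{-1}$ for $\{\lambda^-\ge 0\}$. The paper uses the projection fact implicitly, whereas you justify it via \Cref{lemma.medidas.invariantes} and also note that Ruelle's inequality applies in integrated form to the non-ergodic measure $\mu_{I_+}$; both remarks only make explicit what the paper leaves tacit.
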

\bp 
Let $I^{+}=\{x:\lam^+(x;g)\leq 0\}, I^-=\{x:\lam^-(x;g)\geq 0\}$. Suppose that $\mu(I^+)>0$: since $I^+$ is invariant, $\mu_{I^+}$ is invariant. On $I^+$ both Lyapunov exponents are nonpositive. Applying  Ruelle's inequality \cite{Ruelle1978}, we obtain
\[
  0\geq h_{\mu_{I^+}}(g)\geq h_{\pi_*\mu_{I^+}}(f)=h_{\pi_*\mu}(f)>0,  
\]
which is a contradiction. An analogous argument applied to $g^{-1}$ yields $\mu(I^-)=0$, so $\mu$ is hyperbolic. 
\epr

We deduce that if $\pi:M\to M$ is a semiconjugacy between a diffeomorphism $g$ and a pseudo-Anosov $f$, then large measures are necessarily hyperbolic.\par

\medskip

\section{The Bernoulli property}
\label{sec:Bernoulli}

Let $g$ be a $C^\infty$ diffeomorphism as in the hypotheses of \Cref{mainthm:isomorphism.mu}, and let $\pi$ be the corresponding semiconjugacy. If $\mu$ is a $g$-invariant measure such that $$\pi_*\mu=\mu_{\mathrm{MME}},$$
then by \Cref{prop:nuh} $\mu$ is a hyperbolic measure. 
 In this section, we prove that $\mu$ is a Bernoulli measure. Recall that an invertible measure-preserving transformation of a probability space is {\em Bernoulli} if it is metrically isomorphic to a Bernoulli shift.\par
\smallskip
The strategy to prove the Bernoulli property in the proof of Theorem \ref{mainthm:isomorphism.mu} is to demonstrate that there exists a hyperbolic periodic point $p$ such that for every ergodic entropy-large measure $\mu$, 
\begin{equation}\label{eq.ehc.M}
    \mu(\ehc^*(p))=1.
\end{equation}
Then, $\mu(\ehc(p))=1$ for every entropy-large measure $\mu$, and by definition, all ergodic entropy-large measures are homoclinically related. By Theorem \ref{thm:BCS} (1), there is a unique entropy-large measure.
Also, $\ehc^*(p)\zeroeq\ehc(p)$, and by Remark \ref{rmk:bernoulli.mme}, $(g,\mu)$ is Bernoulli.\par
\smallskip
To prove the Bernoulli property of $m$ in  \Cref{rem:conservative} it is enough to show that there exists a hyperbolic periodic point $p$ such that 
$$m(\ehc^*(p))=1.$$
Then $\ehc^*(p)\zeroeq \ehc(p)\zeroeq M$, and by Remark \ref{rmk:bernoulli}, $(g,m)$ is Bernoulli. \par
\smallskip

We first show that,
for $\mu$-almost every $x$, the lifts of the stable and unstable
manifolds of $x$ have infinite diameter in the universal cover.
We begin by establishing that $\pi$ does not collapse intervals
inside invariant manifolds; see Section \ref{subsection.collapsible}.  A recurrence argument
then gives the infinite-diameter property; see Section \ref{sec:surjective.invariant}.\par 

Proposition \ref{pro:top.transv.int.general} implies that any $g$-stable manifold and
$g$-unstable manifold whose $\pi$-images have lifts of infinite diameter intersect
in a topologically transverse way. This statement does not depend
on the measure. Consequently, for $\mu\times \mu$-almost every pair
$(x,y)$, the $g$-stable manifold of $x$ intersects the
$g$-unstable manifold of $y$ in a topologically transverse way.\par

The diameters of the lifts of the $\pi$-images of the $g$-invariant manifolds of a periodic point $p$ are infinite in the universal cover if $\mu(\ehc(p))>0$. This holds for any ergodic large measure $\mu$. See Proposition \ref{prop:infinite.diam.periodic}.\par
To prove that there is a unique measure $\mu$ that is entropy-large, and also that it has the Bernoulli property, the strategy is to find a hyperbolic periodic point such that 
$\mu(\ehc^*(p))=1$ for all entropy-large measures $\mu$. This implies that all ergodic entropy-large measures are homoclinically related (recall  \eqref{def:homoc.related.measures}), and \Cref{thm:BCS} (1) then implies that there is a unique entropy-large measure. \Cref{rmk:bernoulli.mme} implies that $\mu$ is Bernoulli. \par

Let $\mu_0$ be any ergodic entropy-large measure. Since $\mu_0$ is hyperbolic, Katok's closing lemma implies there exists a hyperbolic periodic point $p$ such that $\mu_0(\ehc(p))=1$. The argument in Proposition \ref{prop:infinite.diam.periodic} implies that the $\pi$-images of the $g$-stable and unstable manifolds of $p$ have lifts with infinite diameter. Hence, for any ergodic entropy-large measure $\mu$, $\mu$-almost every $g$-stable manifold intersects the $g$-unstable manifold of $p$ and $\mu$-almost every $g$-unstable manifold intersects the $g$-stable manifold of $p$ in a topologically transverse way.\par
Every ergodic entropy-large measure $\mu$ as in the hypotheses of Theorem \ref{mainthm:isomorphism.mu} is a measure that maximizes the entropy of $g$, since 
\begin{equation}\label{eq:entropy.large}
h_{top}(f)=h_{top}(g)\geq h_\mu(g)\geq h_{\pi_*\mu}(f)=h_{top}(f).
\end{equation}

To prove that $\mu$-almost every invariant manifold transversely intersects the corresponding invariant manifold of $p$, we apply \Cref{thm:sard.BCS}. That is, if $\cT(p)$ is the set of points $y$ such that the $g$-stable manifold of $y$ has a non-transverse intersection with the $g$-unstable manifold of the periodic point $p$, then this exceptional family has zero transverse Hausdorff dimension if $g$ is $C^\infty$. Since a positive-entropy hyperbolic measure has positive transverse dimension, and $\mu$ is ergodic, $\cT$ has $\mu$-measure zero. In an analogous way, we obtain that $\mu$-almost every $g$-unstable manifold transversely intersects the $g$-stable manifold of $p$, whence $\mu(\ehc^*(p))=1$. The Bernoulli property follows.\par
 \smallskip

In the case of \Cref{rem:conservative}, we apply the Sard argument of \cite{HHTUSRB}. Given a
periodic point $p$ whose lifted invariant manifolds have infinite
diameter, the stable manifolds in a Pesin set form a
$C^1$-lamination. Sard's theorem implies that the union of the
stable leaves having a non-transverse intersection with
$W^u(p;g)$ has zero $m$-measure. Hence the set of points whose $g$-stable manifold transversely intersects the unstable manifold of a periodic point $p$ as in the previous paragraph is of full $m$-measure. We proceed analogously with the set of $g$-unstable manifolds intersecting the $g$-stable manifold of $p$. Hence $m(\ehc^*(p))=1$. This proves that $m$ is Bernoulli in the hypotheses of \Cref{rem:conservative}.\par\medskip

\subsection{Non-collapsible local invariant manifolds}\label{subsection.collapsible} 
Let $\mathcal{C}$ be the set of points with collapsible local unstable manifolds. That is, $x\in\cC$ if there exists $\eps>0$ such that 
\[
\pi(W^+_\eps(x;g))=\pi(x).    
\]
The set $\cC$ is $g$-invariant.

Let $\mu$ be a large measure for $g$ and set $\nu=\pi_*\mu$.

\begin{lemma}\label{claim.non.collapsible.general}
    $\mu(\cC)=0$.
\end{lemma}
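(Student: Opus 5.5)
Suppose for contradiction that $\mu(\cC)>0$. Since $\cC$ is $g$-invariant, \Cref{lemma.medidas.invariantes} gives $\pi_*\mu_{\cC}=\nu$. So we may replace $\mu$ by $\mu_{\cC}$, or by an ergodic component of it, which still projects to $\nu$ by the extremality argument used for $L(\nu)$. We may then assume $\mu(\cC)=1$ and $\mu$ ergodic. By \Cref{prop:nuh}, $\mu$ is hyperbolic, so for $\mu$-a.e.\ $x$ the Pesin unstable manifold $W^+(x;g)$ is an immersed curve. For $x\in\cC$, let $\eps(x)>0$ be the supremum of radii $\eps$ with $\pi(W^+_\eps(x;g))=\pi(x)$; this is a measurable function.

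The plan is to propagate collapsibility from local to global unstable manifolds using backward iteration. Since $W^+(x;g)=\bigcup_{n\ge0}g^{n}\bigl(W^+_{\eps}(g^{-n}x;g)\bigr)$, and $\pi\circ g^n=f^n\circ\pi$, we get
\[
\pi\bigl(g^n W^+_{\eps(g^{-n}x)}(g^{-n}x;g)\bigr)=f^n\pi(g^{-n}x)=\pi(x).
\]
By Poincaré recurrence, choose a set $A$ of positive measure with $\eps\ge\eps_0$ on $A$ and Pesin estimates uniform on $A$. For $\mu$-a.e.\ $x$, infinitely many $n$ have $g^{-n}x\in A$. The images $g^nW^+_{\eps_0}(g^{-n}x)$ are neighborhoods of $x$ in $W^+(x)$ whose inner radius tends to infinity, by uniform expansion on the Pesin block. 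Hence $\pi$ is constant on all of $W^+(x;g)$ for a.e.\ $x$.

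Next I convert this into zero entropy for $\nu$, which contradicts largeness. Take a measurable partition $\xi$ subordinate to $W^+$ for $(g,\mu)$, with $h_\mu(g)=H_\mu(\xi\mid g\xi)$ (Ledrappier--Young). Every atom of $\xi$ is mapped by $\pi$ to a single point. I would then argue that $h_\nu(f)$ equals the entropy of $f$ relative to the $\pi$-image of the unstable structure, which is trivial. A cleaner route: because $\pi$ collapses every unstable manifold, the pushforward of the unstable conditionals is a Dirac mass. By the Ledrappier--Young characterization (or the Pinsker-type argument), the $\sigma$-algebra $\pi^{-1}\mathcal B$ is contained in the $\sigma$-algebra generated by $\bigvee_n g^{-n}\xi$ modulo $\mu$... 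The simplest version I would try is the following. The partition $\eta=\pi^{-1}(\text{points})$ is coarser than the partition into global unstable manifolds. That partition is measurable-coarser than $\xi^-:=\bigwedge_n g^{n}\xi$, which is contained in the Pinsker $\sigma$-algebra of $(g,\mu)$ (the unstable "tail" is Pinsker). Hence $\pi^{-1}\mathcal B_X$ lies in the Pinsker algebra, so $h_\nu(f)=0$. This contradicts $h_\nu(f)>0$.

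The main obstacle is this last step, namely rigorously showing that a factor which is constant along unstable leaves has zero entropy. An alternative I would keep in reserve avoids Pinsker theory. $\pi(W^+(x;g))\subset W^u(\pi(x);f)$ is a point, while $\nu$ is an ergodic positive-entropy measure of the pseudo-Anosov map. Take $y$, $y'$ in the same local unstable leaf of $g$ with $d(g^{-n}y,g^{-n}y')\to0$. Collapsing means $\pi(y)=\pi(y')$, so $\pi$ sends a.e.\ point's entire unstable leaf to one point. Then $\nu$-a.e.\ point would have its $\nu$-unstable conditional measures atomic, which by Ledrappier--Young for the hyperbolic measure $\nu$ of $f$ (via a Markov partition) forces $h_\nu(f)=0$. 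Either way the contradiction yields $\mu(\cC)=0$.
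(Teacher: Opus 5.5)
Your argument is correct, but it ends differently from the paper's.

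\textbf{The paper's route.} The paper never shows that $\pi$ collapses whole unstable leaves. It uses Katok's closing lemma to get a hyperbolic periodic point $p$ with $\eta(\ehc(p))=1$ and picks $y\in W^+(x;g)\pitchfork W^-(\cO(p);g)$. Backward recurrence to $C_n$, used only along the arc from $x$ to $y$, gives $\pi(x)=\pi(y)\in W^s(\cO(\pi(p));f)$ on a set of positive measure. Ergodicity of $\nu$ and Poincar\'e recurrence then force $\nu$ onto the periodic orbit $\cO(\pi(p))$, contradicting $h_\nu(f)>0$.

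\textbf{Your route.} You propagate the collapse to all of $W^+(x;g)$, and your recurrence step is fine. You then conclude with an entropy fact. The step you flag as the main obstacle is standard and takes one line in the paper's own language. Every atom of $\xi$ lies in a fiber of $\pi$, so $\xi\vee\cP_\pi=\xi$. The relative form of $H_\mu(g^{-1}\xi\mid\xi)\le h_\mu(g)$ then gives $h_\mu(g\mid\cP_\pi)\ge H_\mu(g^{-1}\xi\mid\xi\vee\cP_\pi)=H_\mu(g^{-1}\xi\mid\xi)=h_\mu(g)$. The Abramov--Rokhlin formula used in \Cref{prop:injectivity.unstables} then yields $h_\nu(f)=0$. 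This is exactly the statement that the unstable tail $\bigwedge_n g^n\xi$ lies in the Pinsker algebra.

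\textbf{Comparison.} Your route is more general. It shows that any factor of a hyperbolic measure that collapses a.e.\ local unstable manifold has zero entropy. It needs no closing lemma, no homoclinic classes, and nothing about the factor being pseudo-Anosov. The paper's route avoids relative entropy theory and uses only that $\pi$ maps $g$-stable manifolds into $f$-stable leaves.

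\textbf{Drop the ``reserve'' alternative.} Collapse of $g$-unstable leaves gives no direct information about how $\nu$ disintegrates along $f$-unstable leaves. Asserting that those conditionals are atomic therefore presupposes the zero-entropy conclusion.
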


\begin{proof}
    Suppose that $\mu(\cC)>0$. By \Cref{lemma.medidas.invariantes}, $\mu_{\cC}$ also projects onto $\nu$, and so do its ergodic components. Choose an ergodic component $\eta$, so $\eta(\cC)>0$: $\eta$ is an ergodic hyperbolic measure satisfying $\pi_*\eta=\nu$.  Katok's closing lemma implies the existence of a hyperbolic periodic point $p$ such that $\eta(\ehc(p))=1$. \par

    Write $\cC=\bigcup_{n>0} C_n$, where $C_n=\left\{x:\pi\left(W^+_{\frac{1}n}(x;g)\right)=\pi(x)\right\}$, and choose $n$ so that $\eta(C_n)>0$. By Poincar\'e's recurrence theorem, there exists $R_n\subset C_n$ with $\eta(C_n\setminus R_n)=0$ such that every point in $R_n$ returns infinitely many times in the past to $R_n$.  If $x\in R_n$ and $y\in W^{+}(x;g)\pitchfork W^{s}(\cO(p);g)$, then for some large iterate $k$, $g^{-k}(x)\in R_n$ and $g^{-k}(y)\in W^+_{\frac1n}(g^{-k}(x);g)$, thus $\pi(g^{-k}(x))=\pi(g^{-k}(y))$, and therefore $\pi(x)\in W^s(\cO(\pi(p));f)$. If $B=R_n\cap \ehc(p)$, then $\eta(B)>0$ and $\pi(B) \subset W^s(\cO(\pi(p));f)$. Therefore, 
    \[
    0<\eta(B)\leq \eta\left(\pi^{-1}(\pi(B))\right)=\nu(\pi(B))\quad\Rightarrow \quad\nu(W^s(\cO(\pi(p));f))>0. 
    \]
    On the other hand, $W^s(\cO(\pi(p));f)$ is an $f$-invariant set, and since $\nu$ is ergodic, it necessarily has full $\nu$-measure. Every point in 
    $W^s(\cO(\pi(p));f)$ converges to $\cO(\pi(p))$, hence by  Poincar\'e's recurrence theorem we deduce that $\nu$ is supported on $\cO(\pi(p))$. This is absurd, since we have assumed that $h_{\nu}(f)>0$.  
 \end{proof}

Applying the previous claim to $g$ and $g^{-1}$ we obtain:

\begin{proposition}\label{prop.no.colapsable}
    For $\mu$-almost every $x\in M$, $\pi(W_{\mathrm{loc}}^\pm(x;g))$ contains a nontrivial arc in $W^{u/s}(\pi(x);f)$.
\end{proposition}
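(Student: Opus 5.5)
The plan is to combine \Cref{claim.non.collapsible.general}, applied to both $g$ and $g^{-1}$, with the inclusion $\pi(W^\pm(x;g))\subset W^{u/s}(\pi(x);f)$ from \Cref{ssec:pseusoanosovfactors}, and then use connectedness.

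\textbf{Unstable case.} By \Cref{prop:nuh}, $\mu$ is hyperbolic, so $\mu(\cR)=1$. By \Cref{claim.non.collapsible.general}, $\mu(\cC)=0$. Take $x\in\cR\setminus\cC$. Then $W^+_{\mathrm{loc}}(x;g)$ is an embedded arc containing $W^+_\eps(x;g)$ for small $\eps$, so for every $\eps>0$ the set $\pi(W^+_\eps(x;g))$ is not a single point. Being a continuous image of an arc, it is compact, connected and nondegenerate. Moreover, $W^+_{\mathrm{loc}}(x;g)\subset W^+(x;g)$, so
\[
\pi(W^+_{\mathrm{loc}}(x;g))\subset W^u(\pi(x);f).
\]
So $\pi(W^+_{\mathrm{loc}}(x;g))$ is a nondegenerate continuum inside the leaf $W^u(\pi(x);f)$.

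\textbf{From continuum to arc.} It remains to check that such a continuum contains a nontrivial arc of the leaf. Here I would work in the universal cover. Lift $W^+_{\mathrm{loc}}(x;g)$ and apply $\widetilde\pi$. The image is a compact connected subset of a lifted unstable leaf of $\widetilde f$. That lifted leaf is a properly embedded tree: a line, or a finite union of prongs at a singularity, and in the pseudo-Anosov case at most one singular point per leaf. A compact connected subset of such a tree with more than one point contains an arc. Projecting back to $M$, the covering projection is injective on a small neighborhood, so the arc descends to a nontrivial arc in $W^u(\pi(x);f)$.

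The only subtle point is that the leaf $W^u(\pi(x);f)$ is dense in $M$ and is not embedded. This is why I would pass to the lift, where it becomes a closed tree and the continuum statement holds.

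\textbf{Stable case.} Apply \Cref{claim.non.collapsible.general} to $g^{-1}$, which is semiconjugate to $f^{-1}$ through the same $\pi$. The measure $\mu$ is still large for $g^{-1}$: $\pi_*\mu$ is $f^{-1}$-ergodic with the same positive entropy. This gives the statement for $W^-_{\mathrm{loc}}(x;g)$ and $W^s(\pi(x);f)$. Intersecting the two full-measure sets proves the proposition.
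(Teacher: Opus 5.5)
Your proposal is correct and matches the paper's argument: the paper's whole proof is to apply \Cref{claim.non.collapsible.general} to $g$ and to $g^{-1}$, which is semiconjugate to the pseudo-Anosov $f^{-1}$ via the same $\pi$ and for which $\mu$ is still large, exactly as you do. Your continuum-to-arc step in the properly embedded lifted leaf spells out what the paper treats as immediate; the one point you state without justification is that $\widetilde\pi$ maps the lifted local arc into a single lifted leaf of $\widetilde f$, and this holds because backward $\widetilde g$-orbits along that arc stay close, so the backward $\widetilde f$-orbits of their images stay at bounded distance.
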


\subsection{Unbounded diameter of invariant manifolds}
\label{sec:surjective.invariant}

Let $\mu$ be a large measure. Consider lifts $\widetilde f,\widetilde g,\widetilde \pi$ as given in Proposition \ref{prop.semiconj}. For a lift $\widetilde x$ of $x$, we denote by $\widetilde W^\pm(\widetilde x;\tilde g)$ the lift of $W^\pm(x;g)$ that contains $\widetilde x$. For a set contained in an unstable or stable leaf, we denote its
{\em intrinsic diameter} by $\diam_u$ or $\diam_s$, respectively.

\begin{corollary}\label{cor:infinite.diam}
For $\mu$-almost every $x$ and every lift $\widetilde{x}$ of $x$, one has
\[
\diam_u\left(\pi(W^+(x;g))\right)
=
\diam_u\left(
\widetilde{\pi}
\bigl(\widetilde{W}^+(\widetilde{x};\widetilde{g})\bigr)
\right)
=
+\infty
\]
and
\[
\diam_s\left(\pi(W^-(x;g))\right)
=
\diam_s\left(
\widetilde{\pi}
\bigl(\widetilde{W}^-(\widetilde{x};\widetilde{g})\bigr)
\right)
=
+\infty.
\]
As a consequence,
\[
\diam\left(
\widetilde{W}^{\pm}(\widetilde{x};\widetilde{g})
\right)
=
+\infty.
\]
\end{corollary}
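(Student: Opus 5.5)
We argue for unstable manifolds; the stable case follows by applying the same argument to $g^{-1}$. The plan is a recurrence argument that combines \Cref{prop.no.colapsable} with the uniform expansion of $\widetilde f$ along unstable leaves. Since $f$ is pseudo-Anosov, there is $\lambda>1$ such that $\widetilde f$ multiplies the transverse-measure length $\ell_u$ of unstable arcs by exactly $\lambda$. Because $\widetilde\pi$ and $\widetilde g$ commute with the lifted dynamics, we have $\widetilde g^n(\widetilde W^+(\widetilde x;\widetilde g))=\widetilde W^+(\widetilde g^n\widetilde x;\widetilde g)$. Moreover $\pi(W^+(x;g))\subset W^u(\pi(x);f)$, so the image $\widetilde\pi(\widetilde W^+(\widetilde x;\widetilde g))$ is a connected subset of a single lifted unstable leaf, which is a tree, or a line away from singularities.

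\textbf{Step 1.} By \Cref{prop.no.colapsable} and measurability, choose $\delta>0$ and a set $A$ with $\mu(A)>0$ such that for $y\in A$ the arc $\pi(W^+_{\mathrm{loc}}(y;g))$ contains a nontrivial arc of $\ell_u$-length at least $\delta$. Since $\pi$ is homotopic to the identity, $\widetilde\pi$ maps the lifted local manifold to an arc in the lifted unstable leaf through $\widetilde\pi(\widetilde y)$, and this arc also has $\ell_u$-length at least $\delta$.

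\textbf{Step 2.} By Poincar\'e recurrence applied to $g^{-1}$, almost every $x\in A$ has infinitely many $n_k\to\infty$ with $g^{-n_k}(x)\in A$. For such a time, set $\widetilde y=\widetilde g^{-n_k}(\widetilde x)$. Then
\[
\widetilde\pi\bigl(\widetilde W^+(\widetilde x;\widetilde g)\bigr)\supset \widetilde f^{\,n_k}\Bigl(\widetilde\pi\bigl(\widetilde W^+_{\mathrm{loc}}(\widetilde y;\widetilde g)\bigr)\Bigr),
\]
and the right-hand side is an unstable arc of $\ell_u$-length at least $\lambda^{n_k}\delta\to\infty$. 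On lifted unstable leaves of the pseudo-Anosov, $\ell_u$-length and intrinsic length are comparable up to uniform constants, so $\diam_u=+\infty$. To pass from $A$ to $\mu$-a.e. $x$, let $\delta\to0$ over a countable sequence of sets $A_\delta$, whose union has full measure by \Cref{prop.no.colapsable}. For the equality with $\diam_u(\pi(W^+(x;g)))$, use that the covering projection restricted to a lifted leaf is injective for pseudo-Anosov foliations, since leaves have no closed loops; hence the diameters upstairs and downstairs agree. The identical argument, using backward expansion along stable leaves, handles $W^-$.

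\textbf{Step 3 (consequence).} By the proof of \Cref{prop.semiconj}, $d(\widetilde\pi(\widetilde z),\widetilde z)\le K$ for every $\widetilde z$. So if $\widetilde W^\pm(\widetilde x;\widetilde g)$ were bounded, its $\widetilde\pi$-image would be bounded in $\widetilde M$. However, lifted unstable leaves of $\widetilde f$ are quasi-geodesics, being properly embedded trees in the hyperbolic plane, so an arc of infinite intrinsic length is unbounded in the metric of $\widetilde M$. This contradiction gives $\diam(\widetilde W^\pm(\widetilde x;\widetilde g))=+\infty$.

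\textbf{Main obstacle.} The hardest step is the passage from intrinsic leaf length to extrinsic unboundedness, together with the handling of singular leaves, where $\widetilde\pi$ of the local manifold might lie along a prong. The first thing I would try is working with the transverse-measure length $\ell_u$, which is defined and exactly expanded even through singularities, and then invoking properness of the lifted leaves of the pseudo-Anosov foliation.
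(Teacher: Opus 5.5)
Your proposal is correct and follows essentially the same route as the paper. The common ingredients are: Poincar\'e recurrence for $g^{-1}$ on the sets where the $\pi$-image of the unstable manifold has size at least $1/n$ (you use local manifolds and $\ell_u$-length, the paper uses $\diam_u$ of the whole image); expansion by $\lambda$ along $f$-unstable leaves; the absence of closed leaves to identify diameters upstairs and downstairs; and properness of lifted leaves together with $\sup_{\widetilde z} d(\widetilde\pi(\widetilde z),\widetilde z)<\infty$ for the ambient diameter. In Step 3, properness of the lifted leaves alone gives the unboundedness you need, so drop the quasi-geodesic justification: properness does not imply quasi-geodesicity, and on the torus the universal cover is Euclidean, not hyperbolic.
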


\begin{proof}
We consider only unstable manifolds; the stable manifold case is analogous. For every $n\geq1$, let
\begin{align*}
&D_n^+=\left\{x:\diam_u\left(\pi(W^+(x;g))\right)\geq\frac1n\right\},
\intertext{and denote}
&D^+=\bigcup_{n\geq1}D_n^+.
\end{align*}
Since $\pi(W^+(x;g))$ is nontrivial for $\mu$-almost every $x$, we have $\mu(D^+)=1$. For each $n$, Poincar\'e recurrence applied to $g^{-1}$ gives a set
$R_n^+\subset D_n^+$ such that
\[
\mu(D_n^+\setminus R_n^+)=0
\]
and, for every $x\in R_n^+$, there exists a sequence $k_j\to\infty$ such that $g^{-k_j}(x)\in D_n^+$. Therefore, the set $R^+:=\bigcup_{n\geq1}R_n^+$
has full $\mu$-measure.

Fix $x\in R^+$ and choose $n$ such that $x\in R_n^+$. The invariance of unstable manifolds and the semiconjugacy relation give
\[
\pi(W^+(x;g))=f^{k_j}\left(\pi(W^+(g^{-k_j}(x);g))\right).
\]
Since $f$ expands intrinsic distances along unstable leaves by its dilatation factor $\lambda_0>1$, we obtain
\[
\begin{aligned}
\diam_u\left(\pi(W^+(x;g))\right)=\lambda_0^{k_j}\diam_u\left(\pi(W^+(g^{-k_j}(x);g))\right)\geq\frac{\lambda_0^{k_j}}{n}.
\end{aligned}
\]
Letting $j\to\infty$, we conclude that
\[
\diam_u\left(\pi(W^+(x;g))\right)=+\infty.
\]

The unstable foliation of a pseudo-Anosov homeomorphism has no closed leaves. Therefore, the projection from a lifted
unstable leaf to the corresponding leaf in $M$ is an isometry for the intrinsic leaf metrics. It follows that
\[
\diam_u\left(\widetilde{\pi}\bigl(\widetilde{W}^+(\widetilde{x};\widetilde{g})\bigr)\right)=+\infty.
\]

Since lifted  unstable leaves are properly embedded in $\widetilde{M}$, infinite intrinsic diameter implies infinite
ambient diameter. Hence,
\[
\diam\left(\widetilde{\pi}\bigl(\widetilde{W}^{+}(\widetilde{x};\widetilde{g})\bigr)\right)=+\infty.
\]

Finally, since $\|\widetilde\pi-\Id_{\widetilde M}\|$ is finite, we deduce that 
\[
\diam\left(\widetilde{W}^{+}(\widetilde{x};\widetilde{g})\right)=+\infty.
\]
\end{proof}

\subsection{Topological transverse intersection of invariant manifolds}\label{section.topological.intersection}
Let $\mu$ be a large measure. We will prove the following central proposition.

\begin{proposition}\label{pro:top.transv.int.general}
    Let $x,y$ be so that $\pi(W^+(x;g)), \pi(W^-(y;g))$ have infinite intrinsic diameter. Then, there exists a topologically transverse intersection between $W^+(x;g)$ and $W^-(y;g)$.
\end{proposition}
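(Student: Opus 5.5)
We argue in the universal cover $\widetilde M\cong\R^2$, using the lifts $\widetilde f,\widetilde g,\widetilde\pi$ of \Cref{prop.semiconj} and the bound $\|\widetilde\pi-\Id_{\widetilde M}\|\le K$. Fix lifts $\widetilde x,\widetilde y$ and set $\widetilde W^+=\widetilde W^+(\widetilde x;\widetilde g)$, $\widetilde W^-=\widetilde W^-(\widetilde y;\widetilde g)$. Their images $\widetilde\pi(\widetilde W^+)$ and $\widetilde\pi(\widetilde W^-)$ are connected subsets of a lifted unstable leaf $\widetilde L^u$ and a lifted stable leaf $\widetilde L^s$ of $\widetilde f$. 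By hypothesis and the argument of \Cref{cor:infinite.diam}, each has infinite intrinsic diameter. The first step is to understand the leaves of $\widetilde f$: lifted leaves of the (singular) invariant foliations of a pseudo-Anosov are properly embedded trees, with finitely many prongs at a lifted singularity. Any two leaves $\widetilde L^u,\widetilde L^s$ meet in at most one point, and each separates $\R^2$ into finitely many components.

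Step 1 is to choose deck translates so that the images actually cross. Since $\widetilde\pi(\widetilde W^+)$ is connected and unbounded in both intrinsic directions (or along some prongs), it contains a properly embedded ray or line $\ell^u\subset\widetilde L^u$. Using minimality of the pseudo-Anosov foliations and density of every leaf, we replace $\widetilde y$ by a deck translate $\gamma\widetilde y$ for which $\widetilde L^s$ crosses $\ell^u$ at an interior point $z$. We also arrange that $\ell^s\subset\widetilde\pi(\widetilde W^-)$ passes through $z$ with both sides unbounded, so that near $z$ the ends of $\ell^s$ lie in different complementary components of $\ell^u$, far from $z$. Here the infinite intrinsic diameter is essential. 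If $\widetilde\pi(\widetilde W^-)$ were only a half-leaf, we would choose the translate so that $z$ is interior to it; this is possible because every unstable leaf meets every stable half-leaf densely.

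Step 2 transfers the crossing back to $\widetilde g$. The curve $\widetilde W^+$ is a properly embedded line: it is injectively immersed, and it has infinite diameter at both ends because $\widetilde\pi$ moves points by at most $K$. Hence it separates $\R^2$. The key point is that the ends of $\widetilde W^-$ lie in different components of $\R^2\setminus\widetilde W^+$. Points of $\widetilde W^-$ whose images are far out along the two ends of $\ell^s$ lie at distance $\gg K$ from $\widetilde L^u$, on opposite sides of it. I would build a properly embedded ``fattened'' line $N$ in the $K$-neighbourhood of $\ell^u$ and show that $\widetilde W^+$ and $\ell^u$ induce the same side-labeling on points at distance $>2K$. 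This uses the fact that a point at distance $>K$ from $\widetilde W^+$ can be joined to its $\widetilde\pi$-image by a segment avoiding $\widetilde W^+$. A connectedness (Jordan-type) argument then shows that $\widetilde W^-$ must cross $\widetilde W^+$ topologically. Projecting to $M$ gives a topologically transverse intersection in the sense of \Cref{def:topological.crossing}.

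The main obstacle is this comparison of sides in Step 2, together with excluding the tangential configuration. One must show that a topological crossing, not merely a touching, occurs. For this I would take a small arc of $\widetilde W^-$ joining two points on opposite sides of $\widetilde W^+$ and extract an intersection point where the arc genuinely passes from one side to the other. Such a point exists because the set of sides is locally constant off $\widetilde W^+$. The singular prongs of the pseudo-Anosov leaves need separate care: a $k$-prong leaf separates the plane into $k$ regions. I would handle this by restricting to two prongs forming a properly embedded line through $z$.
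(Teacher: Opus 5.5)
Your Step 1 matches the paper's first step: choose half-leaves in $\pi(W^+(x;g))$ and $\pi(W^-(y;g))$, and lifts that meet at a point $\widetilde w$ far from their initial points. Step 2, however, has a genuine gap. It assumes that $\widetilde W^+$ is a properly embedded line separating $\R^2$, and that far from it its two sides agree with the two sides of a line $\ell^u\subset\widetilde\pi(\widetilde W^+)$. None of this follows from the hypothesis, and it fails in general.

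(i) The hypothesis only says that $\pi(W^+(x;g))$ has infinite intrinsic diameter. So only one branch of $W^+(x;g)\setminus\{x\}$ need have unbounded image, and $\widetilde\pi(\widetilde W^+)$ may contain only a ray, which gives no side labelling. Take $g$ to be the inverse of a derived-from-Anosov map on $\T^2$; it is $C^\infty$, isotopic to an Anosov map, and has the same entropy. Each saddle created by the surgery has one unstable branch that falls into the sink and is collapsed by $\pi$, while its other branch maps onto a half-leaf, so the saddle satisfies the hypothesis. Together with the sink, $\widetilde W^+$ is then a proper ray and separates nothing.

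(ii) Infinite diameter of a branch does not give properness. Nothing makes $\widetilde\pi|_{\widetilde W^+}$ monotone, so a branch may leave and re-enter a compact set infinitely often. In the paper, monotonicity is obtained only later, for $\mu$-a.e.\ point, and its proof uses this proposition through \Cref{rem:finite.intersection}.

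(iii) Even if $\widetilde W^+$ is a proper line, nothing prevents a fold sending both branches onto the same half-leaf. Then $\widetilde W^+$ lies in the $K$-neighbourhood of a ray, whose complement is connected. So all points of $\widetilde W^-$ far from $\widetilde L^u$, on either side, lie in one component of $\R^2\setminus\widetilde W^+$. A crossing still exists (twice), but your side-comparison claim is false. This folded configuration is exactly where the paper later applies the result, to the arcs $J_1^+,J_2^+$ in \Cref{prop:injectivity.unstable}.

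The missing idea is to localize to compact arcs, which makes the global topology of $\widetilde W^\pm$ irrelevant. The paper proceeds as follows.
\begin{enumerate}
\item Take a branch $\widetilde U_x$ of $\widetilde W^+$ whose image contains the lifted half-leaf $\widetilde H_x^+$, and likewise a branch $\widetilde U_y$ of $\widetilde W^-$ whose image contains $\widetilde H_y^-$.
\item Choose segments $I^u\subset\widetilde H_x^+$ and $I^s\subset\widetilde H_y^-$ crossing once at $\widetilde w$. Their endpoints must be far from the other segment compared with the displacement $C$ of the homotopy $\widetilde\pi_t$ from $\Id$ to $\widetilde\pi$.
\item By the intermediate value theorem, find arcs $J^+\subset\widetilde U_x$ and $J^-\subset\widetilde U_y$ that $\widetilde\pi$ maps onto $I^u$ and $I^s$, endpoints to endpoints.
\item Follow $t\mapsto\widetilde\pi_t$, then a homotopy inside $I^u$, $I^s$ relative to endpoints. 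Along the way the endpoints of each arc never meet the other arc, so the mod~$2$ intersection number is preserved: $\iota_2(J^+,J^-)=\#(I^u\cap I^s)=1$.
\item Hence $J^+\cap J^-\neq\varnothing$. A topological crossing must occur, since a purely tangential intersection could be removed by a small perturbation, contradicting $\iota_2=1$.
\end{enumerate}
This uses only one branch of each manifold and needs neither properness nor monotonicity. It is also the form, \Cref{rem:finite.intersection}, that the paper needs later.
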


\smallskip 
 
\begin{definition}[Topological crossing]\label{def:topological.crossing}
 If $A,B$ are $1$-dimensional (possibly immersed) submanifolds, we say that they have a {\em topologically transverse intersection} if there exists $z\in A\cap B$ which is a crossing point. That is, there exists a disk $D$ centered at $z$ and an interval $I\subset A$ containing $z$ and separating $D$ into two  components, and there is an interval $J\subset B$ centered at $z$, such that the two connected components of $J\setminus \{z\}$ lie in different components of $D\setminus I$. We denote by $A\pitchfork_{\mathrm{top}}B$ the set of topologically transverse intersections between $A$ and $B$.   
\end{definition}
 
\smallskip 
 
By Corollary \ref{cor:infinite.diam} we deduce that:

\begin{corollary}
    For $\mu$-almost every $x,y$, $W^+(x;g) \pitchfork_{top} W^-(y;g)\neq \varnothing$.
\end{corollary}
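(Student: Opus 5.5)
This corollary is a direct combination of \Cref{cor:infinite.diam} with \Cref{pro:top.transv.int.general}; no new dynamics enters, only bookkeeping of full-measure sets. Here ``$\mu$-almost every $x,y$'' is read as ``for $\mu\times\mu$-almost every pair $(x,y)$.''

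First, apply \Cref{cor:infinite.diam} to the large measure $\mu$. It gives a set $G^+\subset M$ with $\mu(G^+)=1$ such that
\[
\diam_u\bigl(\pi(W^+(x;g))\bigr)=+\infty \qquad\text{for every } x\in G^+ .
\]
The stable half of the same corollary gives a set $G^-$ with $\mu(G^-)=1$ such that
\[
\diam_s\bigl(\pi(W^-(y;g))\bigr)=+\infty \qquad\text{for every } y\in G^- .
\]
Both sets can be taken inside the Pesin regular set $\cR$, since $\mu$ is hyperbolic by \Cref{prop:nuh}. Hence $W^\pm$ are genuine immersed curves there, and \Cref{def:topological.crossing} applies to them.

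Next, set $G=G^+\times G^-\subset M\times M$. Then $(\mu\times\mu)(G)=\mu(G^+)\,\mu(G^-)=1$. For each $(x,y)\in G$, the hypotheses of \Cref{pro:top.transv.int.general} hold verbatim: $\pi(W^+(x;g))$ and $\pi(W^-(y;g))$ both have infinite intrinsic diameter. That proposition therefore yields a topologically transverse intersection point of $W^+(x;g)$ and $W^-(y;g)$, that is, $W^+(x;g)\pitchfork_{top}W^-(y;g)\neq\varnothing$.

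The only point that needs care is matching the exact form of the hypothesis in \Cref{pro:top.transv.int.general}. That proposition is stated in terms of intrinsic diameters of $\pi$-images in $M$, and \Cref{cor:infinite.diam} provides precisely this (and equivalently the lifted versions for every lift). So no lift-choice issue arises, and the corollary follows. All the substantive difficulty lies in \Cref{pro:top.transv.int.general} itself, which is established in the remainder of this section.
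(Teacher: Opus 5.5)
Your proposal is correct and follows the paper's argument exactly. The paper deduces this corollary directly from \Cref{cor:infinite.diam}, which gives a full-measure set of points whose $\pi$-images of $W^\pm$ have infinite intrinsic diameter, and then applies \Cref{pro:top.transv.int.general} to each pair in the resulting full $\mu\times\mu$-measure product set; your reading of ``$\mu$-almost every $x,y$'' as $\mu\times\mu$-almost every pair also matches the paper's own phrasing in \Cref{sec:Bernoulli}.
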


\begin{proof}[Proof of \Cref{pro:top.transv.int.general}]
Fix $x,y$ so that $\pi(W^+(x;g)) \subset W^u(\pi(x);f)$ and $ \pi(W^-(y;g)) \subset W^s(\pi(y);f)$ have infinite intrinsic diameter. Then, each one of these sets contains one complete half-leaf (a copy of $[0,+\infty)$).  Write
 \[
 W^+(x;g)\setminus\{x\}=U_x^1\sqcup U_x^2
 \]
 where each $U_x^i$ is an open interval containing $x$ on its closure. Then $\pi(U_x^i)$ is an interval in $W^{u}(\pi(x);f)$ containing $\pi(x)$ on its closure, and since the image of the union contains a half-leaf, we conclude that for some $i$, $U_x=U_x^i$, $\pi(U_x)$ contains a half-leaf $H_x^+ \subset W^u(\pi(x);f)$. Likewise, there is a connected component $S_y$ of $W^-(y;g)\setminus\{y\}$ so that $\pi(S_y)$ contains a half-line $H_y^{-}$.

It will be convenient to use the distance $d_f$ on $\widetilde M$ obtained by lifting the distance determined by the stable and unstable foliations of $f$. This way, lifts of non-singular leaves are geodesic, and lifts of non-singular stable and unstable leaves intersect orthogonally.

Infinite half-leaves of pseudo-Anosov homeomorphisms are dense (see Expos\'e 9 in \cite{Fathi_2021}), therefore we can find $\omega\in H_x^{+}\cap H_y^{-}$ as far as we want from the initial points of the half-lines. Choose lifts $\widetilde H_x^+$ of $H_x^+$, $\widetilde H_y^-$ of $H_y^-$, and $\widetilde w$ of $w$ so that $\widetilde w\in \widetilde H_x^+\cap \widetilde H_y^-$; by the product structure between lifts of non-singular leaves of the stable and unstable foliations of $f$, it follows that $\{\widetilde w\}=\widetilde H_x^+\cap \widetilde H_y^-$, and the intersection is orthogonal. The idea now is that, taking sufficiently large intervals $I^u \subset \widetilde H_x^+, I^s \subset \widetilde H_y^-$ intersecting at $\widetilde w$, we can find $J_x^+, J_y^-$ large intervals that project onto $I_x^u, I_y^s$ under $\widetilde \pi$. Since $\widetilde \pi$ is homotopic to the identity, the intersection cannot be destroyed, and $J_x^+, J_y^-$ also intersect. 
   
Let $C:=\sup_{z\in \widetilde M, t\in [0,1]} d_f(z,\widetilde \pi_t(z))$, where $\widetilde\pi_t$ is the homotopy between $\Id_{\widetilde M}$ and $\widetilde \pi$. With no loss of generality $d_f(\widetilde w,\partial \widetilde H_x^+), d_f(\widetilde w,\partial \widetilde H_y^-)>3C$. Thus, we can find compact symmetric sub-arcs $I^u=[a,b] \subset \widetilde H_x^+, I^s=[c, d] \subset \widetilde H_y^-$ centered at $\widetilde w$ so that $I^u\cap I^s=\{\widetilde w\}$, and 
\[
    d_f(\partial I^u,I^s)=d_f(\partial I^u, \widetilde w)=d_f(I^u,\partial I^s)=d_f(\widetilde w, \partial I^s)>3C
 \]
Consider a lift $\widetilde U_x$ of $U_x$ so that $\widetilde \pi(\widetilde U_x)\supset\widetilde H_x^+$ and a lift $\widetilde U_y$ of $U_y$ so that $\widetilde \pi(\widetilde U_y)\supset\widetilde H_y^-$. Note that $\widetilde \pi|:\widetilde U_x\to \widetilde W^u(\widetilde \pi(\widetilde x);\widetilde f)$ is a continuous map between intervals. Therefore, we can find $J^+=[\widetilde a, \widetilde b] \subset \widetilde U_x$ so that $\widetilde J^+=I^u$, $\widetilde \pi(\widetilde a)=a, \widetilde\pi(\widetilde b)=b$. Similarly, we can find $J^-=[\widetilde c,\widetilde d] \subset\widetilde U_y$ that projects onto $I^s$ under $\widetilde\pi$, with boundary points going to boundary points. We want to show that $J^+\cap J^-\neq \varnothing$.

\textbf{Claim:} for every $t\in [0,1]$, $\widetilde \pi_t(\partial J^+)\cap \widetilde\pi_t (J^-)=\varnothing$ and $\widetilde \pi_t(J^+)\cap \widetilde\pi_t (\partial J^-)=\varnothing$.

\smallskip 
 
Indeed, take $z\in J^-$ and compute
\begin{align*}
d_f(\widetilde \pi_t(\widetilde a), \widetilde\pi_t(z))&\geq d_f(a,\widetilde \pi(z))-2C\geq 3C-2C=C>0.
 \end{align*} 
Similarly for the other cases. 

Consider arc length parametrizations $\alpha^\pm$ of $J^{\pm}$, and denote $\alpha_t^{\pm}=\widetilde\pi_t(\alpha^{\pm})$. Then, 
$\operatorname{Im}(\alpha_0^{\pm})=J^\pm$, while for $t=1$ we get $\operatorname{Im}(\alpha_1^{+})=I^u, \operatorname{Im}(\alpha_1^{-})=I^s$. Let $\beta_t^{\pm}$ be defined on $[0,2]$ so it coincides with $\alpha_t^{\pm}$ for $t\in [0,1]$, and for $t\in [1,2]$ is a homotopy inside $\widetilde\pi(J^{\pm})$ relative to its endpoints, such that $\beta_2^{\pm}$ parametrizes $\widetilde\pi(J^{\pm})$ by arc length.

If $\iota_2$ denotes the $\bmod 2$ intersection number between curves, it follows that 
\[
    \iota_2(\beta^+_0,\beta^-_0)=\iota_2(\beta^+_2,\beta^-_2)=\#(I^u\cap I^s)=1.
\]
This implies that necessarily $\operatorname{Im}(\beta^+_0)\cap \operatorname{Im}(\beta^-_0)=J^{+}\cap J^-\neq \varnothing$. Necessarily, there is crossing intersection, because otherwise by making a small homotopy we could make $J^{+}\cap J^-= \varnothing$, contradicting the fact that the intersection number is non-zero.

From here we deduce that $W^+(x;g)$ intersects $W^-(y;g)$ in a topologically transverse way.
\end{proof}

\begin{remark}
Fix $\sigma\in\{+,-\}$, a point $x$, and a lift $\widetilde{x}$ of
$x$. If
\begin{align}\label{eq:infinitediamter}
\diam\left(\widetilde{W}^{\sigma}(\widetilde{x};\widetilde{g})\right)=+\infty,
\end{align}
then, for $\mu$-almost every $y$,
\[
W^{\sigma}(x;g)
\pitchfork_{\mathrm{top}}
W^{-\sigma}(y;g)
\neq\varnothing.
\]
Indeed, since $\widetilde \pi$ is at bounded distance to the identity, equality \eqref{eq:infinitediamter} implies that 
\[
 \diam\left(\widetilde\pi(\widetilde{W}^{\sigma}(\widetilde{x};\widetilde{g}))\right)=+\infty,   
\]
and therefore its intrinsic diameter is also infinite. On the other hand, Corollary \ref{cor:infinite.diam} provides the same property for the opposite invariant manifold. This was what we used in the previous proof.

Equivalently, for $\mu$-almost every $y$, there exists a lift $\widetilde{y}$ of $y$ such that
\[
\widetilde{W}^{\sigma}(\widetilde{x};\widetilde{g})
\pitchfork_{\mathrm{top}}
\widetilde{W}^{-\sigma}(\widetilde{y};\widetilde{g})
\neq\varnothing.
\]
\end{remark}

\begin{remark}\label{rem:finite.intersection}
The proof of \Cref{pro:top.transv.int.general} gives the following more precise statement.
Let $C$ be as in that proof. Suppose that
\[
I^u\subset \widetilde W^u(\widetilde\pi(\widetilde x);\widetilde f),
\qquad
I^s\subset \widetilde W^s(\widetilde\pi(\widetilde y);\widetilde f)
\]
are compact intervals such that $I^u\cap I^s=\{\widetilde w\}$ and
\[
d_f(\partial I^u,I^s)>3C,
\qquad
d_f(I^u,\partial I^s)>3C.
\]
If
\[
J^+\subset\widetilde W^+(\widetilde x;\widetilde g),
\qquad
J^-\subset\widetilde W^-(\widetilde y;\widetilde g)
\]
are compact intervals satisfying
\[
\widetilde\pi(J^+)=I^u,
\qquad
\widetilde\pi(J^-)=I^s,
\]
then
\[
J^+\pitchfork_{\mathrm{top}}J^-\neq\varnothing.
\]
In particular, if $R_+(x)=\infty$ and $R_-(y)=\infty$ and
$$\widetilde W^u(\widetilde\pi(\widetilde x);\widetilde f)\cap \widetilde W^s(\widetilde\pi(\widetilde y);\widetilde f)\ne\varnothing,$$
then
$$\widetilde W^+(\widetilde x;\widetilde g)\transv_{top} \widetilde W^-(\widetilde y;\widetilde g)\ne\varnothing.$$
\end{remark}

\subsection{\textbf{Non-trivial invariant manifolds of periodic points}} Next we consider periodic points.

\begin{lemma}
    Let $p$ be a hyperbolic periodic point and $\mu$ a hyperbolic measure. Then 
    \[
    \mu(\ehc^*(p)\setminus \overline{W^{\pm}(p;g)})=0.
    \]
\end{lemma}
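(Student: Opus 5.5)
The plan is a Poincaré recurrence argument applied to the power $g^{\peri(p)}$. I treat $\overline{W^{+}(p;g)}$; the case $\overline{W^{-}(p;g)}$ is identical after replacing $g$ by $g^{-1}$. The key point is that $\ehc^*(p)$ only uses the \emph{intersection} condition, not transversality, so no inclination lemma is needed.

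Set $N=\peri(p)$ and $K=\overline{W^{+}(p;g)}$. Since $g^{N}(W^+(p;g))=W^+(p;g)$, the closed set $K$ is $g^{N}$-invariant. The measure $\mu$ is $g$-invariant, hence also $g^{N}$-invariant; no ergodicity is needed for recurrence. For $\delta>0$ define
\[
A_\delta=\bigl\{x\in \ehc^*(p): d(x,K)>\delta\bigr\}.
\]
These are Borel sets, and
\[
\ehc^*(p)\setminus K=\bigcup_{m\geq1}A_{1/m},
\]
so it suffices to show $\mu(A_\delta)=0$ for each $\delta>0$. Suppose instead that $\mu(A_\delta)>0$. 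By Poincaré recurrence for $g^{N}$, $\mu$-almost every $x\in A_\delta$ has a sequence $k_j\to\infty$ with $g^{Nk_j}(x)\in A_\delta$. Fix such an $x$.

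Because $x\in\ehc^*(p)$, there is a point
\[
z\in W^-(x;g)\transv W^+(p;g).
\]
By the definition of the Pesin stable manifold,
\[
d\bigl(g^{n}(x),g^{n}(z)\bigr)\to0 \quad\text{as } n\to+\infty,
\]
and in fact exponentially fast. On the other hand, $g^{Nk_j}(z)\in g^{Nk_j}(W^+(p;g))=W^+(p;g)\subset K$. Hence
\[
d\bigl(g^{Nk_j}(x),K\bigr)\leq d\bigl(g^{Nk_j}(x),g^{Nk_j}(z)\bigr)\to0,
\]
which contradicts $d(g^{Nk_j}(x),K)>\delta$ for all $j$. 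Therefore $\mu(A_\delta)=0$ for every $\delta>0$, and so $\mu(\ehc^*(p)\setminus K)=0$.

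For the stable case, I use the other half of the definition of $\approx$: a point of $W^+(x;g)\transv W^-(p;g)$ exists. I then apply the same argument to $g^{-N}$ and the $g^{N}$-invariant closed set $\overline{W^-(p;g)}$, using that Pesin unstable manifolds contract under backward iteration.

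The only delicate step is the choice of power. The recurrence must be taken along multiples of $\peri(p)$ so that the iterates of $z$ stay in $W^+(p;g)$ itself, rather than in $W^+(g^n(p);g)$. Once that choice is made, the argument is just the exponential approach along $W^-(x;g)$ combined with recurrence.
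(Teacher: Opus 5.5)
Your proof is correct and is essentially the paper's argument: both combine recurrence along multiples of $\peri(p)$ with the convergence $d(g^n(x),g^n(z))\to 0$ for $z\in W^-(x;g)\transv W^+(p;g)$, and use the backward analogue for $\overline{W^-(p;g)}$. The only difference is cosmetic: the paper uses the fact that $x$ returns close to itself to get $d(x,W^+(p;g))<\eps$ directly, while you apply Poincaré recurrence to the sets $A_\delta$ and argue by contradiction.
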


\begin{proof}
Let $x\in\ehc^*(p)$ be a bi-recurrent point for $g^{\peri(p)}$, and let $\eps>0$. There exists $z\in W^-(x;g)\transv W^+(p;g)$. Then there exists $N>0$ such that $d(g^{n\peri(p)}(x),g^{n\peri(p)}(z))<\eps/2$ for all $n\geq N$. Since $x$ is recurrent, there exists $n\geq N$ such that $d(x,g^{n\peri(p)}(x))<\eps/2$. Since $\eps>0$ is arbitrary, the triangle inequality implies that $d(x,W^+(p;g))=0$ and hence $x\in\overline{W^+(p;g)}$. In an analogous way, we prove $x\in\overline{W^-(p;g)}$. Since the set of bi-recurrent points has full $\mu$-measure, the
conclusion follows.
\end{proof}

\begin{proposition}\label{prop:infinite.diam.periodic}
    Let $\mu$ be a large measure. If $\mu(\ehc^*(p))>0$ and $\widetilde p$ is a lift of $p$, then  $\diam(\widetilde W^\pm(\widetilde p;\widetilde g))=\infty$.
\end{proposition}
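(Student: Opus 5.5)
We prove the statement for $\widetilde W^+(\widetilde p;\widetilde g)$; the stable case follows by applying the same argument to $g^{-1}$. The plan is to transfer the infinite-diameter property of Corollary \ref{cor:infinite.diam} from $\mu$-typical points to $p$ through the homoclinic relation. By hypothesis $\mu(\ehc^*(p))>0$. Intersecting with the full-measure set of Corollary \ref{cor:infinite.diam}, we fix a point $x\in\ehc^*(p)$ with
\[
\diam_u\bigl(\pi(W^+(x;g))\bigr)=+\infty
\qquad\text{and}\qquad
\diam_s\bigl(\pi(W^-(x;g))\bigr)=+\infty .
\]
Since $x\approx p$, there is a transverse intersection point $z\in W^+(p;g)\transv W^-(x;g)$. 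By the inclination lemma ($\lambda$-lemma) for the hyperbolic periodic point $p$ (of period $k=\peri(p)$), forward iterates $g^{nk}(J)$ of a small arc $J\subset W^+(x;g)$ through a point of $W^+(x;g)\transv W^-(p;g)$ accumulate in $C^1$ on compact pieces of $W^+(p;g)$. This direction alone, however, does not make $W^+(p;g)$ large, so we argue in the other direction.

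The key step uses the $\lambda$-lemma applied to $g^{-1}$. The point $z$ lies on $W^-(x;g)$ and on $W^+(p;g)$. Iterating forward by $g^{nk}$, the point $g^{nk}(z)$ approaches $g^{nk}(x)$ along the stable manifold, while the arc of $W^+(p;g)$ through $z$ is stretched. Symmetrically, at a point $z'\in W^-(p;g)\transv W^+(x;g)$, the $\lambda$-lemma for $g^{-1}$ shows that backward iterates of the arc of $W^+(p;g)$ near $p$ do not help directly. The cleaner route is projective. By the semiconjugacy, $\pi(W^+(p;g))\subset W^u(\pi(p);f)$, and $\pi(W^+(p;g))$ is $f^k$-invariant and contains $\pi(p)$. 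Hence $\pi(W^+(p;g))$ is either the single point $\pi(p)$ or has infinite intrinsic diameter, because $f^k$ expands unstable length by $\lambda_0^k$ and an invariant connected nontrivial set containing the fixed point $\pi(p)$ must be unbounded. It therefore suffices to rule out $\pi(W^+(p;g))=\{\pi(p)\}$.

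To exclude collapse, we suppose $\pi(W^+(p;g))=\{\pi(p)\}$. Take $x\in\ehc^*(p)$ typical as above and $z\in W^-(x;g)\transv W^+(p;g)$. Then $\pi(z)=\pi(p)$, so $\pi(x)\in W^s(\pi(z);f)=W^s(\pi(p);f)$. This holds for a $\mu$-positive set of $x$, so $\pi_*\mu(W^s(\cO(\pi(p));f))>0$. Arguing exactly as in Lemma \ref{claim.non.collapsible.general}, ergodicity of $\pi_*\mu$ and Poincar\'e recurrence would force $\pi_*\mu$ to be supported on $\cO(\pi(p))$, contradicting positive entropy. Thus $\pi(W^+(p;g))$ is nontrivial, hence of infinite intrinsic diameter. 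Passing to the lift, as at the end of Corollary \ref{cor:infinite.diam}, the intrinsic diameter of $\widetilde\pi(\widetilde W^+(\widetilde p;\widetilde g))$ is infinite. Lifted leaves are properly embedded, so the ambient diameter is infinite, and since $\widetilde\pi$ is at bounded distance from $\Id$, $\diam(\widetilde W^+(\widetilde p;\widetilde g))=\infty$.

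The main obstacle is the step from nontriviality to unboundedness. The $f^k$-invariance and expansion argument requires the connected set $\pi(W^+(p;g))$ to contain a nontrivial arc having $\pi(p)$ as an endpoint or interior point. When $\pi(p)$ is a singularity of the foliation, we handle it by working prong by prong: we pass to a power of $f$ fixing each prong and apply expansion on the prong containing a nontrivial piece of the image.
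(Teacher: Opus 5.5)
Your argument is correct. Its skeleton matches the paper's proof: show that $\pi(W^+(p;g))$ is not a point, use $f^{\ell}$-invariance of this set together with the exact scaling $\diam_u(f^{\ell}(A))=\lambda_0^{\ell}\diam_u(A)$ to force infinite intrinsic diameter, then lift using the isometry onto leaves, properness of lifted leaves, and $\|\widetilde\pi-\Id\|<\infty$.

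The genuine difference is in how collapse is excluded.
\begin{itemize}
\item The paper first proves the auxiliary lemma that $\mu$-almost every point of $\ehc^*(p)$ lies in $\overline{W^{+}(p;g)}$. This is a recurrence argument upstairs: a $g^{\ell}$-bi-recurrent $x$ is approached by $g^{n\ell}(z)\in W^+(p;g)$, where $z\in W^-(x;g)\transv W^+(p;g)$. Hence $\nu(\pi(\overline{W^+(p;g)}))\geq\mu(\ehc^*(p))>0$, and the paper concludes from non-atomicity of $\nu$.
\item You instead push the homoclinic point directly through $\pi$. Collapse gives $\pi(\ehc^*(p))\subset W^s(\pi(p);f)$, and you then reuse the downstairs recurrence argument of \Cref{claim.non.collapsible.general}.
\end{itemize}
Both routes are complete. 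Yours avoids the auxiliary lemma and any recurrence upstairs, recycling an argument already in the paper. The paper's final step invokes only non-atomicity of $\nu$, rather than ergodicity plus the structure of $W^s(\cO(\pi(p));f)$.

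Two cosmetic remarks. First, the opening discussion of the $\lambda$-lemma and the choice of $x$ with $\diam_u(\pi(W^{\pm}(x;g)))=+\infty$ play no role and can be deleted; any $x\in\ehc^*(p)$ works. Second, the prong-by-prong treatment of a singular $\pi(p)$ is unnecessary. The intrinsic path metric on the possibly singular leaf is scaled by exactly $\lambda_0^{\ell}$, however $f^{\ell}$ permutes the prongs, so the diameter identity already gives the dichotomy $\diam_u\in\{0,+\infty\}$.
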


\begin{proof}
Let $\nu=\pi_*\mu$. We prove the statement for the unstable manifold. The stable case is analogous. By the previous lemma, $\mu\left(\ehc^*(p)\setminus\overline{W^+(p;g)}
\right)=0$, hence we have
\[
\nu\left(\pi\left(\overline{W^+(p;g)}\right)\right)=\mu\left(\pi^{-1}\left(\pi\left(\overline{W^+(p;g)}\right)\right)\right)
\geq\mu(\ehc^*(p))>0.
\]

Since $\nu$ is ergodic and has positive entropy, it is non-atomic, and therefore $\pi\left(\overline{W^+(p;g)}\right)$ cannot be a point.  Therefore,
$\pi(W^+(p;g))$ is not a point. 

Let $\ell$ be the period of $p$, $F=f^\ell, G=g^\ell$. Then $G(W^+(p;g))=W^+(p;g)$, and thus $F\left(\pi(W^+(p;g))\right)=\pi(W^+(p;g))$. Since $F$ expands intrinsic distances along unstable leaves by $\lambda_0^\ell>1$, we obtain
\[
\diam_u\left(\pi(W^+(p;g))\right)=\diam_u\left(F\left(\pi(W^+(p;g))\right)\right)=\lambda_0^\ell\diam_u\left(\pi(W^+(p;g))\right).
\]
The diameter is nonzero, and $\lambda_0^\ell>1$. It follows that
\[
\diam_u\left(\pi(W^+(p;g))\right)=+\infty.
\]

As in the final part of the proof of Corollary \ref{cor:infinite.diam}, this implies
\[
\diam_u\left(
\widetilde{\pi}
\bigl(\widetilde{W}^+(\widetilde{p};\widetilde{g})\bigr)
\right)
=
+\infty.
\]
Properness of the lifted unstable leaves and the fact that
$\widetilde{\pi}$ is at bounded distance from the identity then give
\[
\diam\left(
\widetilde{W}^+(\widetilde{p};\widetilde{g})
\right)
=
+\infty.
\]
\end{proof}

As a consequence, for $\mu$-almost every $x$, 
\[
    W^{+}(x;g)\pitchfork_{\mathrm{top}} W^-(p;g)\neq\varnothing\quad \text{ and }\quad W^{-}(x;g)\pitchfork_{\mathrm{top}} W^+(p;g)\neq\varnothing.
\]

\subsection{The Bernoulli property}\label{sec:Sard}

\begin{proposition}\label{prop:Bernoulli}
Let $\mu$ be an entropy-large measure in the hypotheses of  \ref{mainthm:isomorphism.mu}. Then the system $(g,\mu)$ is Bernoulli.\par
 Moreover, there is only one entropy-large measure $\mu$ in the hypotheses of Theorem \ref{mainthm:isomorphism.mu}.    
\end{proposition}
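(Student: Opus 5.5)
We follow the strategy outlined at the beginning of this section. First we fix a single periodic point for the whole argument. Take any ergodic entropy-large measure $\mu_0$; one exists because $L(\MME)$ contains an ergodic measure. By \Cref{prop:nuh}, $\mu_0$ is hyperbolic. Katok's closing lemma (or \Cref{thm:BCS}, since $\mu_0$ maximizes entropy by \eqref{eq:entropy.large}) then gives a hyperbolic periodic point $p$ with $\mu_0(\ehc(p))=1$. Replacing $p$ by a point of its orbit, we may assume $\mu_0(\ehc^*(p))>0$. \Cref{prop:infinite.diam.periodic} then shows that for every lift $\widetilde p$ the lifted manifolds $\widetilde W^\pm(\widetilde p;\widetilde g)$ have infinite diameter. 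By the bounded distance of $\widetilde\pi$ to the identity, $\widetilde\pi(\widetilde W^\pm(\widetilde p;\widetilde g))$ has infinite intrinsic diameter. This is a property of $p$ alone and does not depend on any measure.

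Now let $\mu$ be an arbitrary ergodic entropy-large measure. By \Cref{cor:infinite.diam}, for $\mu$-a.e. $x$ the sets $\pi(W^\pm(x;g))$ have infinite intrinsic diameter. \Cref{pro:top.transv.int.general} then gives
\[
W^-(x;g)\pitchfork_{\mathrm{top}}W^+(p;g)\neq\varnothing
\quad\text{and}\quad
W^+(x;g)\pitchfork_{\mathrm{top}}W^-(p;g)\neq\varnothing .
\]
Thus $\mu$-a.e. $x$ lies in the set $\cC(p)$ of \Cref{thm:sard.BCS}, and in the analogous set for $g^{-1}$. The key step is to upgrade topological crossings to genuine transverse ones. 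By \eqref{eq:entropy.large}, $\mu$ is a measure of maximal entropy, and it is hyperbolic, so \Cref{thm:sard.BCS} applies: the set $\cT(p)$ of points whose stable manifold meets $W^+(p;g)$ non-transversally has transverse Hausdorff dimension zero. Since $h_\mu(g)=\htop(f)>0$, the Ledrappier--Young theory shows that $\mu$ has positive (in fact constant a.e.) transverse dimension along stable leaves. Hence any set of zero transverse dimension that is saturated by stable leaves inside a Pesin block has $\mu$-measure zero. Here $\cT(p)$ is saturated by stable manifolds, and ergodicity lets us work on Pesin blocks of measure close to one, so $\mu(\cT(p))=0$.

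There is one subtlety. A topological crossing point may itself be non-transverse while other intersection points are transverse. We only need one transverse point, so it suffices that $W^-(x;g)$ meets $W^+(p;g)$ somewhere and that no intersection is non-transverse. That holds off $\cT(p)$, so $\mu$-a.e. $x$ has $W^-(x;g)\transv W^+(p;g)\neq\varnothing$. Applying the same reasoning to $g^{-1}$ gives $W^+(x;g)\transv W^-(p;g)\neq\varnothing$. Hence $\mu(\ehc^*(p))=1$.

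Because $p$ was chosen independently of $\mu$, every ergodic entropy-large measure gives full measure to $\ehc(p)$. All of them are therefore homoclinically related, hyperbolic and entropy-maximizing, so \Cref{thm:BCS}(1) forces them to coincide. A general entropy-large measure has ergodic components that are all entropy-large (\Cref{lemma.medidas.invariantes} and extremality of $\MME$), so it equals this unique measure. Finally, $\mu_0(\ehc^*(p))=1$ implies $\cO(p)\subset\ehc^*(p)$ up to the usual identification, so $\ehc^*(p)\zeroeq\ehc(p)$, and \Cref{rmk:bernoulli.mme} yields that $(g,\mu)$ is Bernoulli.

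The main obstacle is the passage from $d_H(\cT(p))=0$ to $\mu(\cT(p))=0$. This requires a quantitative lower bound on the transverse dimension of $\mu$, and a check that the dimension notion in \Cref{thm:sard.BCS} matches the one given by Ledrappier--Young (through the Lipschitz holonomies on Pesin blocks). I would first try to prove the contrapositive directly: if $\mu(\cT(p))>0$, the local product structure gives a positive-measure transversal set whose conditional measures have dimension $h_\mu/\lambda^+>0$, contradicting zero dimension.
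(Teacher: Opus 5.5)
Your proposal is correct and follows essentially the paper's route. You fix a periodic point $p$ whose lifted invariant manifolds have infinite diameter (\Cref{prop:infinite.diam.periodic}), get topological crossings from \Cref{pro:top.transv.int.general}, upgrade them to transverse intersections via \Cref{thm:sard.BCS} together with positive transverse dimension, and conclude with \Cref{thm:BCS}(1) and \Cref{rmk:bernoulli.mme}. The one difference is minor: you obtain $\mu(\cT(p))=0$ directly from the a.e. constant unstable dimension $h_\mu/\lambda^+>0$, as in the outline at the start of \Cref{sec:Bernoulli}. The written proof instead finds only a positive-measure set of transverse crossings inside a Pesin rectangle, then reaches full measure using the $g^{\ell}$-invariance of $\ehc^*_\pm(p)$ and the $g^{\ell}$-ergodic components.
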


\begin{proof}
The heart of the proof is passing from topological transverse intersections between stable and unstable manifolds to real (differentiable) transverse intersections. Let $\mu$ be a measure such that $\pi_*\mu=\MME$, the entropy-maximizing measure of $f$. Then, as discussed in \eqref{eq:entropy.large}, $\mu$ is a measure of maximal entropy for $g$. We can then use \Cref{thm:sard.BCS}.  \par

As mentioned before, the strategy is to prove that there exists a periodic point $p$ such that $\mu$-almost every $x$ belongs to $\ehc^*(p)$. Since $\mu$ is hyperbolic there exists a hyperbolic periodic point $p$ so that $\mu(\ehc^*(p))>0$. Then, by Propositions \ref{prop:infinite.diam.periodic} and \ref{pro:top.transv.int.general}, for every large measure $\mu'$ and for $\mu'$-almost every $x$, $W^-(x;g)$ intersects $W^+(p;g)$ in a topologically transverse way. 
We will show that, for $\mu$-almost every $x$, $W^-(x;g)$ transversely intersects $W^+(p;g)$. An analogous argument will show that $W^+(x;g)$ transversely intersects $W^-(p;g)$, and hence $x\in\ehc^*(p)$.\par

Let $P$ be a Pesin block for $\mu$. Then for $\mu$-almost every $x\in P$, there exists a small rectangle $R=R_x$ such that $\partial R$ is formed by pieces of transverse stable and unstable leaves, $\mu(\partial R)=0$, and $x\in \inte R$. 
For $\mu$-almost every $y\in P\cap R$, $y\mapsto W^-(y)\cap R$ is a $\cC^1$-lamination of $R$. 
\newline\par

\begin{tikzpicture}[scale=1.0]
\begin{centering}
\draw[thick] (0,0) rectangle (8,4);
\node at (7.5,3.7) {$R$};

\foreach \y in {0.5,0.8,1.2,1.6,2.3,2.5,2.7,2.9,3.5} {
    \draw[blue] (0,{\y}) -- (8,{\y});
}
\node[left,blue] at (0,2.3) {$W^-_R(y;g)$};

\draw[red,thick]
    (0,3.3)
    .. controls (1.8,3.0) and (3.0,2.3) ..
    (4,2.3)
    .. controls (5.2,2.3) and (6.2,1.2) ..
    (8,0.8);

\fill (4,2.3) circle (1.5pt);

\node[red] at (6.3,1.9) {$W^+(p;g)$};
\end{centering}
\end{tikzpicture}

\smallskip

Now, for $\mu$-almost every $x\in P$, $x$ is a recurrent point for $G=g^\ell$, where $\ell$ is the period of $p$, and $W^-(x;g)$ has a topologically transverse intersection with $W^+(p;g)$, as proven in the previous part.

Since $d(G^{k}(x),W^+(p;g))\xrightarrow[k\mapsto\oo]{} 0$, and $x$ is $G$-recurrent, $W^+(p;g)\cap \inte(R)\ne\varnothing$ and $W^+(p;g)$ topologically crosses the stable plaques in $R$. Let $W^-_R(y;g)$ be the connected component of $W^-(y;g)\cap R$ containing $y$. Define
$$
{\mathcal T}=\{y\in R: W^-_{R}(y)\text{ has a non-transverse intersection with }W^+(p)\}
$$

The adapted Sard's lemma \Cref{thm:sard.BCS} implies that the exceptional family $\mathcal{T}$ has zero transverse Hausdorff dimension. Since a positive-entropy hyperbolic measure has positive transverse dimension \cite{LedYoungII}, $\mathcal T$ cannot carry all the $\mu$-mass of $R$. Hence, in both cases, we obtain:
\begin{equation}\label{eq.transversal}
    \mu(\{y\in R: W^-_R(y;g)\pitchfork W^+(p;g)\ne\varnothing\})>0.
\end{equation}

Let $A^-_R(p)=\{y\in R: W^-_R(y;g)\pitchfork W^+(p;g)\ne\varnothing\}$. In an analogous way, we define $A^+_R(p)$. Since $W^+(p;g)$ has infinite intrinsic diameter, we have the following situation:
\begin{enumerate}
    \item for every ergodic entropy-large measure $\mu'$ that is a measure of maximal entropy there exists a  suitable rectangle $R=R(\mu')$ such that $\mu'(A^-_R(p))>0$.  
\end{enumerate}
\par
Consider the ergodic decomposition $\{\mu'\}$ of $\mu$ for $g$, and let $\mu'$ be an ergodic component of $\mu$. The measure $\mu'$ decomposes into finitely many $G$-ergodic components $\mu'_1,\dots,\mu'_q$. 
\smallskip
If $\mu$ is in the hypotheses of Theorem \ref{mainthm:isomorphism.mu}, we will show that all its $g$-ergodic components are homoclinically related. Hence, by \Cref{thm:BCS} (1), there is a unique ergodic component of $\mu$. Therefore, $\mu$ is ergodic and it is the unique entropy-large measure and it satisfies $\mu(\ehc^*(p))=1$.\par

Let $\mu'$ be an ergodic component of $\mu$. Then $\mu'$ is an
ergodic entropy-large measure that maximizes entropy. Let $\mu_i'$
be a $G$-ergodic component of $\mu'$. Applying the preceding argument
to $G=g^\ell$ and $F=f^\ell$, there exists a suitable rectangle $R$
such that
\[
\mu_i'(A_R^\mp(p))>0.
\]
Since the sets
$$\ehc^*_\pm(p)=\{y: W^\pm(y)\transv W^\mp(p)\ne\varnothing\}$$
are $G$-invariant, $A^\pm_R(p)\subset\ehc^*_\pm(p)$, and $\mu'_i$ is $G$-ergodic, we have that $\mu'_i(\ehc^*(p))=1$ for all $i=1,\dots,q$, and hence $\mu'(\ehc^*(p))=1$. By our definition of homoclinically related measures \eqref{def:homoc.related.measures}, all ergodic components $\mu'$ of $\mu$ are homoclinically related. Item (1) of Theorem \ref{thm:BCS} implies that there is only one entropy-large measure that maximizes entropy. 
\par
As explained at the beginning of the section, this implies that $(g,\mu)$ is Bernoulli.
\end{proof}

If $\mu=m$ is the area measure, then the Sard argument introduced in \cite{HHTUSRB} (\Cref{thm:sard.HHTU}) implies that the $m$-measure of the set ${\mathcal T}$ in the proof above is zero. 
Defining $A^\pm_R(p)$ as above, for every $\mu'\ll m$ there exists a suitable rectangle $R=R(\mu')$ such that $\mu'(A_R^\pm(p))>0$.
The area measure has countably many ergodic components,  see \cite{pesin76} and Theorem \ref{teo.ergodic.spectral}. This implies that each $g$-ergodic component satisfies $\mu'\ll m$. Therefore, each $G$-ergodic component satisfies $\mu'_i\ll  m$, and hence, by the remark above, $\mu'_i(A^\pm_R(p))>0$, for some suitable rectangle $R$. Since $A^\pm_R(p)\subset \ehc^*_\pm(p)$ and this set is $G$-invariant, then $\mu'_i(\ehc^*(p))=1$ for all $i$, and hence $\mu'(\ehc^*(p))=1$ for all $\mu'$. Hence $m(\ehc^*(p))=1$. As explained at the beginning of the Section, this implies that $m$ is Bernoulli.\par

\section{\texorpdfstring{$\pi$}{pi} is an isomorphism}
\label{sec:isomorphism}

We fix an ergodic large measure $\mu$ for $g$; in particular $\mu$ is hyperbolic. For what follows, we rely on the theory of entropy and measurable partitions, as appears, for example, in the classical treatise of Rokhlin \cite{Rokhlin_1967}. 

If $\xi$ is a measurable partition of the probability space $(M,\mu)$, we denote by $\xi(x)$ the element of $\xi$ containing $x$, and $\{\mu_x^{\xi}\}$ a corresponding system of conditional measures. 

\begin{definition}
    $\xi$ is increasing for $g$ if $g^{-1}\xi\geq \xi$. 
\end{definition}
The partition $g^{-1}\xi$ has elements $\{g^{-1}\left(\xi(g(x))\right)\}$; it is also measurable. If $\xi$ is increasing for $g$, then 
\[
h_{\mu}(g,\xi)=H_{\mu}(g^{-1}\xi|\xi)=-\int \log \mu^{\xi}_x((g^{-1}\xi)(x)) d\mu(x).
\]

Since $\mu$ is hyperbolic, it is possible to find an increasing partition $\xi$ for $g$ which additionally verifies, for $\mu-$almost every $x$,
\begin{enumerate}
\item $\xi(x)$ is subordinated to $W^+$,
\item $\xi(x)$ contains a relatively open neighborhood of $x$ (an interval),
\item $g^{-n}\xi \xrightarrow[n\to\infty]{}$ Borel $\sigma-$algebra.
\end{enumerate}

See \cite{LedStrelcyn}. In this case, it follows that $h_{\mu}(g)=h_{\mu}(g,\xi)$. Denote by $\mathcal{P}_{\pi}$ the (measurable) partition consisting of pre-images of $\pi$: $\mathcal{P}_\pi(x)=\pi^{-1}(\pi( x))$. Note that $g\mathcal P_\pi=\mathcal P_\pi$.

\smallskip 
 
\textbf{Notation:} We write $\mathcal{Z}=\{\eta \text{ partition of finite entropy}\}$. If $\eta\in\mathcal{Z}$ we denote
\begin{align*}
h_{\mu}(g|\mathcal{P}_\pi)=\sup_{\eta\in\mathcal{Z}}H_{\mu}\left(\eta|\bigvee_{n=1}^{\infty}g^{-n}\eta\vee \mathcal{P}_\pi\right).
\end{align*}
If $\eta_1,\eta_2$ are $\sigma$-algebras, we write $\eta_1 \subset_{\mu} \eta_2$ if for every $A\in \eta_1$ there exists $B\in\eta_2$ so that $\mu(B\triangle A)=0$. If $\eta_1,\eta_2$ are partitions, $\eta_1 \subset_{\mu} \eta_2$ is understood as referring to the $\sigma$-algebras that they generate.

\begin{proposition}\label{prop:injectivity.unstables} Let $\mu$ be an ergodic large measure such that $h_\mu(g)=h_{\pi_*\mu}(f)$. Then for $\mu$-almost every $x$,
$$\mu^\zeta_x=\delta_x,$$
where $\{\mu^\zeta_x\}$ is a family of conditional measures with respect to the partition $\zeta=\xi\vee\cP_\pi$, and $\delta_x$ is the Dirac measure supported on $x$.
An analogous statement holds for $W^-(x)$.
\end{proposition}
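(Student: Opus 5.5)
We use the entropy equality to show that the fibres of $\pi$ carry no entropy relative to the factor, and then read this off on the conditional measures of the unstable partition $\xi$.

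\textbf{Step 1: zero fibre entropy.} By the Abramov--Rokhlin formula for the extension $\pi:(M,\mu,g)\to(M,\nu,f)$, with $\nu=\pi_*\mu$,
\[
h_\mu(g)=h_\nu(f)+h_\mu(g|\cP_\pi).
\]
Combined with the hypothesis $h_\mu(g)=h_\nu(f)$, this gives $h_\mu(g|\cP_\pi)=0$.

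\textbf{Step 2: choice of the increasing partition.} Set $\zeta=\xi\vee\cP_\pi$. Since $g\cP_\pi=\cP_\pi$ and $\xi$ is increasing, $\zeta$ is increasing for $g$. Relative to the invariant $\sigma$-algebra generated by $\cP_\pi$, $\zeta$ is the natural candidate for a relatively generating increasing partition: $g^{-n}\xi$ tends to the Borel $\sigma$-algebra, hence so does $g^{-n}\zeta$.

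\textbf{Step 3: relative entropy as a conditional integral.} The relative version of the Ledrappier--Strelcyn computation should give
\[
h_\mu(g|\cP_\pi)=H_\mu(g^{-1}\zeta\,|\,\zeta)=-\int\log\mu^\zeta_x\bigl((g^{-1}\zeta)(x)\bigr)\,d\mu(x).
\]
One way to check this is to take $\eta\in\mathcal Z$ and approximate $\xi$ by finite partitions $\eta_k\uparrow\xi$. Then
\[
H(\eta_k\,|\,g\eta_k^-\vee\cP_\pi)\to H(g^{-1}\zeta\,|\,\zeta),
\]
and this equals the supremum because $\zeta$ is relatively generating. Since the left side of the display is $0$ by Step 1, we get
\[
\mu^\zeta_x\bigl((g^{-1}\zeta)(x)\bigr)=1
\]
for $\mu$-a.e.\ $x$, and likewise with $g^{-1}$ replaced by $g^{-n}$ for every $n$. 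Verifying this identity rigorously is the main obstacle. The paper's definition of $h_\mu(g|\cP_\pi)$ uses past joins of finite-entropy partitions, and the partition $\xi\vee\cP_\pi$ does not have finite entropy, so the approximation must be done carefully. If needed, I would fall back on the known fibre-entropy results for increasing partitions subordinate to $W^+$ in Ledrappier--Young.

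\textbf{Step 4: conclusion.} Since $g^{-n}\zeta\to$ the Borel $\sigma$-algebra, the atoms $(g^{-n}\zeta)(x)$ shrink to $\{x\}$. The conditional measure $\mu^\zeta_x$ gives full mass to each of these atoms, so the $\mu^\zeta_x$-measure of $\bigcap_n(g^{-n}\zeta)(x)=\{x\}$ is $1$. More concretely, $(g^{-n}\zeta)(x)$ is contained in $g^{-n}\xi(g^n x)$, an unstable interval whose length tends to $0$. The outcome is $\mu^\zeta_x=\delta_x$.

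\textbf{The stable case.} Applying the same argument to $g^{-1}$, with an increasing partition subordinate to $W^-$, gives the stable statement. This uses $h_\mu(g^{-1})=h_\mu(g)$ and $h_\nu(f^{-1})=h_\nu(f)$.
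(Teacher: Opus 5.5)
Your outline is the paper's: Abramov--Rokhlin gives $h_\mu(g|\cP_\pi)=0$, which forces $\zeta=\xi\vee\cP_\pi$ to be $g$-invariant mod $0$, and then $\zeta=_\mu\bigvee_{n\ge0}g^{-n}\zeta\ge\bigvee_{n\ge0}g^{-n}\xi=_\mu\eps$ gives $\mu^\zeta_x=\delta_x$. Steps 1, 2 and 4 are fine.

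The gap is Step 3. You leave it unproved, and the route you sketch aims at a harder statement than you need. The identity $h_\mu(g|\cP_\pi)=H_\mu(g^{-1}\zeta|\zeta)$ does not follow from $\zeta$ being increasing and relatively generating. The point partition is increasing, generating and finer than $\cP_\pi$, yet $H_\mu(g^{-1}\eps|\eps)=0$ whatever the fibre entropy is. So the inequality $\ge$ would need a Ledrappier--Strelcyn type construction. Your convergence claim $H(\eta_k|\cdots)\to H_\mu(g^{-1}\zeta|\zeta)$ is also unjustified. Both the partition and the conditioning $\sigma$-algebra grow with $k$, and conditional entropy is not monotone under such a change.

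You only need $H_\mu(g^{-1}\zeta|\zeta)\le h_\mu(g|\cP_\pi)$, and this is immediate. Take finite partitions $\eta_k\uparrow\xi$. Since $\xi$ is increasing, $g^n\xi\le g\xi$ for $n\ge1$, so $\bigvee_{n\ge1}g^n\eta_k\le g\xi$. Hence
\[
H_\mu(\eta_k\,|\,g\xi\vee\cP_\pi)\le H_\mu\Bigl(\eta_k\,\Big|\,\bigvee_{n\ge1}g^{n}\eta_k\vee\cP_\pi\Bigr)\le h_\mu(g^{-1}|\cP_\pi)=h_\mu(g|\cP_\pi)=0.
\]
Let $k\to\infty$ with the conditioning $\sigma$-algebra fixed. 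This gives $H_\mu(\xi|g\xi\vee\cP_\pi)=0$, i.e.\ $\xi\subset_\mu g\xi\vee\cP_\pi$. Therefore $\zeta=_\mu g\zeta$ and $H_\mu(g^{-1}\zeta|\zeta)=H_\mu(\zeta|g\zeta)=0$.

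This is exactly the paper's argument, which phrases it as $\eta\subset_\mu\bigvee_{n\ge1}g^n\eta\vee\cP_\pi$ for every finite-entropy $\eta$. It also removes your worry about $\zeta$ having infinite entropy: you approximate $\xi$, not $\zeta$, and keep $\cP_\pi$ inside the conditioning $\sigma$-algebra. Note that the relevant conditioning is $\bigvee_{n\ge1}g^{n}\eta_k$ rather than $\bigvee_{n\ge1}g^{-n}\eta_k$, which is why you pass through $h_\mu(g^{-1}|\cP_\pi)=h_\mu(g|\cP_\pi)$. No appeal to Ledrappier--Young is needed.
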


\begin{proof} 
    We only consider the first case, the other one follows since $h_{\mu}(g)=h_{\mu}(g^{-1})$. Let $\nu=\pi_*\mu$. The Abramov-Rokhlin formula \cite{abramov1966entropy} tells us that
    \[
    h_{\mu}(g)=h_{\nu}(f)+h_{\mu}(g|\mathcal{P}_\pi),
    \]
    and therefore, $h_{\mu}(g|\mathcal{P}_\pi)=h_{\mu}(g^{-1}|\mathcal{P}_\pi)=0$. This means that for any $\eta\in\mathcal{Z}$,
    \[
    \eta \subset_{\mu} \bigvee_{n=1}^{\infty}g^{n}\eta\vee \mathcal{P}_\pi.
    \]
    The partition $\xi$ is countably generated; therefore, there exists $(\eta^k)_k \subset \mathcal{Z}$ such that $\eta^k \nearrow \xi$, and thus
    \[
    \eta^k \subset_{\mu}  \bigvee_{n=1}^{\infty}g^{n}\eta^k\vee \mathcal{P}_\pi \subset_{\mu}  \bigvee_{n=1}^{\infty}g^{n}\xi\vee \mathcal{P}_\pi
    \]
    and
    \[
    \xi  \subset_{\mu}  \bigvee_{n=1}^{\infty}g^{n}\xi\vee \mathcal{P_\pi}\]

 Set $\zeta=\xi\vee\cP_\pi$. Then, since $\xi$ is increasing and $\cP_\pi\subset_\mu \bigvee_{n=1}^{\infty}g^{n}\xi\vee \mathcal{P_\pi}$, we have
 $$\zeta=_\mu\bigvee_{n=1}^{\infty}g^{n}\xi\vee \mathcal{P_\pi}.$$
Applying $g^{-1}$, we have
$$g^{-1}\zeta=_\mu \bigvee_{n=0}^{\infty}g^{n}\xi\vee \mathcal{P_\pi}=_\mu \zeta.$$
Then $g^{-n}\zeta=_\mu\zeta$ for every $n\geq 0$. Since $\zeta$ is increasing, and by the properties of $\xi$, we have

$$\eps=_\mu\bigvee_{n\geq 0}g^{-n}\xi\subset_\mu \bigvee_{n\geq 0}g^{-n}\zeta=_\mu\zeta,$$
where $\eps$ is the partition into points. Hence $\zeta=_\mu\eps$.\par
Let $\mu^\zeta_x$ be a family of conditional measures with respect to the partition $\zeta$. Then the equation above implies that, for $\mu$-almost every $x$,
$$\mu^\zeta_x=\delta_x.$$

An analogous statement holds for the partition $\xi^-$ subordinate to $W^-$, and $\zeta^-=\xi^-\vee\cP_\pi$. This concludes the proof.
\end{proof}
\subsection{Surjectivity of \texorpdfstring{$\pi$}{pi} on invariant manifolds}
Let $\mu$ be an ergodic large measure such that $h_\mu(g)=h_{\pi_*\mu}(f)$.  
For $x\in M$ define the \emph{internal radius} of $\pi(W^\pm(x))$ by  
\begin{align}\label{eq:internalradius}
R_\pm(x)=d_{\pm}(\pi(x),\partial \pi(W^\pm(x;g))).
\end{align} 
The internal radius is defined as $\infty$ if the boundary appearing in \eqref{eq:internalradius} is empty.
\begin{lemma}\label{lem:positive.radius} 
For $\mu$-almost every $x$, $R_{\pm}(x)>0$.
\end{lemma}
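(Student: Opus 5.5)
The plan is to show that the set of points where $\pi(x)$ is an endpoint of the leafwise interval $\pi(W^+(x;g))$ has measure zero. It combines the structure of the image of an unstable leaf with \Cref{prop:injectivity.unstables} and the non-atomicity of unstable conditional measures. I treat $R_+$; the case of $R_-$ follows by applying the same argument to $g^{-1}$, using the analogous statement for $\xi^-$ and $\zeta^-$ in \Cref{prop:injectivity.unstables}.

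\emph{Step 1: set-up.} Since $W^+(x;g)$ is a connected immersed curve and $\pi$ is continuous, $\pi(W^+(x;g))$ is a connected subset of the leaf $W^u(\pi(x);f)$, that is, an interval. It has at most two endpoints, and by \Cref{prop.no.colapsable} it is nondegenerate for $\mu$-almost every $x$. Define
\[
E=\{x: R_+(x)=0\},
\]
the set of points $x$ for which $\pi(x)$ is an endpoint of $\pi(W^+(x;g))$. The relations $g(W^+(x;g))=W^+(g(x);g)$ and $\pi g=f\pi$ show that $E$ is $g$-invariant, since $f$ maps leaf endpoints to leaf endpoints. Suppose, for a contradiction, that $\mu(E)>0$. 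By ergodicity, $\mu(E)=1$.

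\emph{Step 2: localize to $\xi$-atoms.} Take the increasing partition $\xi$ subordinate to $W^+$. For $\mu$-almost every $x$ we have $\mu^\xi_x(E)=1$. Every $y\in\xi(x)$ satisfies $W^+(y;g)=W^+(x;g)$, so $\pi(W^+(y;g))$ is the same interval $I_x$. Hence
\[
E\cap\xi(x)\subset \pi^{-1}(\partial I_x)\cap \xi(x),
\]
which is the union of at most two atoms of $\zeta=\xi\vee\cP_\pi$. It follows that some single atom $\zeta(y)=\xi(x)\cap\pi^{-1}(\pi(y))$ carries positive $\mu^\xi_x$-mass.

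\emph{Step 3: derive a contradiction.} By transitivity of conditional measures, the $\zeta$-conditional measure on such an atom is the normalized restriction of $\mu^\xi_x$ to it. \Cref{prop:injectivity.unstables} says this conditional is $\delta_y$. Therefore $\mu^\xi_x$ has an atom at $y$, and this happens for $x$ in a set of positive $\mu$-measure.

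The remaining point, and the step needing care, is that $\mu^\xi_x$ is non-atomic for $\mu$-almost every $x$ whenever $h_\mu(g)>0$. Here $h_\mu(g)\ge h_{\pi_*\mu}(f)>0$. I would argue this in the standard way. The function $x\mapsto \mu^\xi_x(\{x\})$ is measurable, and because $\xi$ is increasing it is non-increasing along $g^{-1}$-orbits. By ergodicity, the set where it is positive has measure $0$ or $1$. If that set had full measure, a Poincaré-recurrence argument would give $\mu^\xi_x=\delta_x$ almost everywhere, so $\xi$ would be the point partition mod $0$. Then $h_\mu(g)=H_\mu(g^{-1}\xi\mid\xi)=0$, a contradiction.

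Granting non-atomicity, we get $\mu^\xi_x(E)=0$ for almost every $x$, contradicting $\mu(E)=1$. Hence $R_+(x)>0$ for $\mu$-almost every $x$.
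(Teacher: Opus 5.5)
Your proof is correct and follows essentially the same route as the paper. Ergodicity forces $\mu(\{R_+=0\})=1$, the endpoint points in each atom $\xi(x)$ fall into finitely many atoms of $\zeta=\xi\vee\cP_\pi$, and \Cref{prop:injectivity.unstables} then makes $\mu^\xi_x$ atomic, contradicting positive entropy. The differences are minor: you allow two endpoints instead of invoking \Cref{cor:infinite.diam} to get at most one, and you prove non-atomicity of $\mu^\xi_x$ directly from $h_\mu(g)=H_\mu(g^{-1}\xi\mid\xi)>0$ rather than citing Ledrappier--Young. For that step, note that $\phi(x)=\mu^\xi_x(\{x\})$ satisfies $\phi\circ g\ge\phi$, so invariance of $\mu$ gives $\phi\circ g=\phi$ almost everywhere. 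This makes your ``recurrence'' step immediate.
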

\begin{proof}
Let $$Z_\pm=\{x:R_\pm(x)=0\}.$$ The sets $Z_\pm$ are invariant. Since $\mu$ is ergodic, $\mu(Z_\pm)\in\{0,1\}. $ Suppose $\mu(Z_+)=1$, the stable case is analogous.\par
Let $\xi$ be the subordinate unstable partition, and let $\zeta=\xi\vee\cP_\pi$. By Proposition \ref{prop:injectivity.unstables}, $\mu^\zeta_x=\delta_x$ $\mu$-a.e. $x$. This implies that there exists a full-measure set $X$ such that for every $x\in X$,
$$\pi|_{X\cap \xi(x)}\quad\text{is injective}.$$
Since $\mu(Z_+\cap X)=1$, disintegrating along $\xi$, we obtain that $\mu$-a.e. $x$,
\begin{equation}\label{eq:atomic}
\mu^\xi_x(Z_+\cap X)=1.    
\end{equation}

By Corollary \ref{cor:infinite.diam}, $\mu$-almost every $x$ satisfies $$\diam_u(\pi(W^+(x)))=\infty,$$
hence $\partial \pi(W^+(x))$ is either empty or a point. For every $x\in Z_+\cap X$, $R_+(x)=0$, then $\pi(x)\in\partial\pi(W^+(x)).$ For all $y\in\xi(x)\cap X\setminus\{x\}$, $\pi(W^+(y))=\pi(W^+(x))$; and, since $\pi$ is injective on $X\cap\xi(x)$, $\pi(x)\ne\pi(y)$, hence $R_+(y)>0$. This implies that $X\cap Z_+\cap\xi(x)$ contains only one point and \eqref{eq:atomic} implies that $\mu^\xi_x$ is atomic for $\mu$-almost every $x$.
This is impossible: by
\cite[Proposition~7.3.1 and Theorem~C$'$]{LedYoungII},
the assumption \(h_\mu(g)>0\) implies that the conditional measures
of \(\mu\) along unstable plaques are non-atomic. Therefore
\(\mu(Z_+)=0\), and hence
\[
R_+(x)>0
\]
for \(\mu\)-almost every \(x\). The proof for \(R_-\) is analogous.
\end{proof}
\begin{proposition}\label{prop:leafwise.surjectivity}
There exists a full-measure set $X$ such that for every $x\in X$ 
\[
R_+(x)=R_-(x)=+\infty.
\]
Consequently,
\[
\pi(W^+(x;g))=W^u(\pi(x);f)
\]
and
\[
\pi(W^-(x;g))=W^s(\pi(x);f)
\]
for every $x\in X$.

Equivalently, for every lift $\widetilde x$ of every $x\in X$,
\[
\widetilde\pi\left(\widetilde W^{\pm}(\widetilde x;\widetilde g)\right)
=
\widetilde W^{u/s}\left(\widetilde\pi(\widetilde x);\widetilde f\right).
\]
\end{proposition}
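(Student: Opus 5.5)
We will prove the statement for $R_+$; the case of $R_-$ follows by applying the same argument to $g^{-1}$. The plan is a recurrence argument based on \Cref{lem:positive.radius}, modeled on the proof of \Cref{cor:infinite.diam}.

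The key object is the \emph{backward-side} internal radius, and we must first make sure it scales correctly under iteration. Since $\pi\circ g=f\circ\pi$ and $g(W^+(x;g))=W^+(g(x);g)$, we have
\[
\pi(W^+(g^{n}x;g))=f^{n}\bigl(\pi(W^+(x;g))\bigr).
\]
The map $f^{n}$ sends $\pi(x)$ to $\pi(g^nx)$ and multiplies intrinsic unstable distances by $\lambda_0^{n}$. It also maps the relative boundary of $\pi(W^+(x;g))$ inside the leaf $W^u(\pi(x);f)$ onto the relative boundary of the image set. Hence
\[
R_+(g^{n}x)=\lambda_0^{n}R_+(x)\qquad(n\in\mathbb Z).
\]
Here $\pi(W^+(x;g))$ is a connected subset of a leaf, so its boundary consists of at most two points. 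Some care is needed at singular leaves; we will handle this by restricting to generic $x$, for which $\pi(x)$ avoids the countably many singular leaves, a set of $\nu$-measure zero since $\nu$ is non-atomic and has positive entropy.

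Next we apply Poincaré recurrence to $g^{-1}$. For each $n\geq1$, set $E_n=\{x: R_+(x)\geq 1/n\}$. By \Cref{lem:positive.radius}, $\mu\bigl(\bigcup_n E_n\bigr)=1$. Let $R_n\subset E_n$ be the full-measure subset of points that return to $E_n$ infinitely often under $g^{-1}$. Take $x\in R_n$ and times $k_j\to\infty$ with $g^{-k_j}x\in E_n$. Then
\[
R_+(x)=\lambda_0^{k_j}R_+(g^{-k_j}x)\geq \lambda_0^{k_j}/n\to\infty,
\]
so $R_+(x)=\infty$. Taking $X=\bigcup_n R_n$, intersected with the analogous set for $R_-$ and with the generic set above, gives a full-measure set on which $R_+=R_-=+\infty$.

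The consequences follow directly. An infinite internal radius means that $\pi(W^+(x;g))$ contains every point of $W^u(\pi(x);f)$ at any intrinsic distance from $\pi(x)$. So $\pi(W^+(x;g))=W^u(\pi(x);f)$; the reverse inclusion is the general fact recorded in \S\ref{ssec:pseusoanosovfactors}. For the lifted statement, the lifted unstable leaf projects isometrically onto its image leaf because there are no closed leaves, as in the proof of \Cref{cor:infinite.diam}. Together with $\widetilde\pi$-equivariance, this transfers surjectivity to $\widetilde\pi(\widetilde W^{+}(\widetilde x;\widetilde g))=\widetilde W^{u}(\widetilde\pi(\widetilde x);\widetilde f)$.

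The main obstacle is the scaling identity $R_+(g^nx)=\lambda_0^nR_+(x)$. It requires that $f^n$ is an intrinsic homothety along unstable leaves carrying boundary to boundary, and that distances are measured inside the leaf, including near prong singularities. If singular leaves cause trouble, the fallback is to define $R_+$ using the leaf metric on the prong containing the relevant half-leaf. That quantity still scales exactly by $\lambda_0^n$, and the recurrence argument goes through unchanged.
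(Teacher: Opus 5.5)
Your proposal is correct and is essentially the paper's proof. The paper also gets $R_\pm=\infty$ by rerunning the Poincar\'e-recurrence argument of \Cref{cor:infinite.diam} with \Cref{lem:positive.radius} in place of \Cref{prop.no.colapsable}, deduces leafwise surjectivity because a proper subinterval of a leaf has a boundary point at finite intrinsic distance, and lifts the equality using that pseudo-Anosov leaves are not closed, so the covering projection is injective on lifted leaves. Your restriction to non-singular leaves is not needed, since the scaling $R_+(g^nx)=\lambda_0^nR_+(x)$ and the clopen argument work on singular leaves as well; if you keep it, their $\nu$-nullity comes from ergodicity plus recurrence as in \Cref{claim.non.collapsible.general}, not from non-atomicity alone.
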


\begin{proof}
The fact that $R_{\pm}(x)=\oo$ for $\mu$-almost every point follows exactly as in the proof of \Cref{cor:infinite.diam}, using the previous lemma instead of \Cref{prop.no.colapsable}. Now $\pi(W^+(x;g))$ is a connected interval in $W^u(\pi(x);f)$. Every proper interval in a pseudo-Anosov leaf has a
boundary point at finite intrinsic distance from each of its points. Thus, $R_{\pm}(x)=+\infty$ implies
\[
\pi(W^{\pm}(x;g))=W^{u/s}(\pi(x);f).
\]
For the second part we use that stable and unstable leaves of a pseudo-Anosov homeomorphism are not closed, hence the covering projection restricts to a homeomorphism (in fact, to an isometry of the intrinsic metric), from each lifted leaf to its corresponding leaf downstairs. The downstairs
equalities therefore lift to
\[
\widetilde\pi\left(
\widetilde W^\pm(\widetilde x;\widetilde g)
\right)
=
\widetilde W^{u/s}\left(
\widetilde\pi(\widetilde x);\widetilde f
\right)
\]
for every lift of every point in the full-measure set above.
\end{proof}

\subsection{\texorpdfstring{$\pi$}{pi} is an isomorphism} Let $\mu$ be an entropy-large $g$-invariant measure.
\begin{proposition}\label{prop:injectivity.unstable}
There exists an invariant full-measure set $X\subset M$ such that for every lift $\widetilde x$ of a point $x\in X$, 
$$\widetilde\pi^{-1}(\widetilde\pi(x))\cap \widetilde W^\pm(\widetilde x;\widetilde g)=\{\widetilde x\}.$$
\end{proposition}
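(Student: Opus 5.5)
The plan is to argue by contradiction, ruling out \emph{folds} as in step (2) of the outline in \Cref{sec:outline}. We treat $W^+$; the stable case follows by applying the same argument to $g^{-1}$.

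\textbf{Setup.} By \Cref{prop:injectivity.unstables} there is a full-measure set $X_0$ such that $\pi$ is injective on $X_0\cap\xi(x)$ for $x\in X_0$. Since $g^{-n}\xi$ is again a measurable partition subordinate to $W^+$, and $\mathcal P_\pi$ is $g$-invariant, the same holds for $X_0\cap g^{-n}\xi(x)$, where $X_0$ is replaced by its invariant core $\bigcap_n g^n X_0$. The elements $g^{-n}\xi(x)=g^{-n}(\xi(g^n x))$ exhaust $W^+(x)$. Hence $\pi$ is injective on $X_0\cap W^+(x)$, and after lifting, $\widetilde\pi$ is injective on the typical points of each lifted leaf $\widetilde W^+(\widetilde x;\widetilde g)$. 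We intersect with the full-measure sets of \Cref{prop:leafwise.surjectivity} and \Cref{cor:infinite.diam}, together with density points of the conditional measures, to obtain the invariant set $X$.

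\textbf{Reduction to a fold.} Suppose $x\in X$ and $\widetilde y\in\widetilde W^+(\widetilde x;\widetilde g)$ satisfy $\widetilde y\neq\widetilde x$ and $\widetilde\pi(\widetilde y)=\widetilde\pi(\widetilde x)$. Since $\widetilde W^u(\widetilde\pi(\widetilde x);\widetilde f)$ is a line, $\widetilde\pi$ maps the arc $[\widetilde x,\widetilde y]$ onto a nondegenerate interval $[\widetilde\pi(\widetilde x),w]$. This interval is nondegenerate because unstable arcs are not collapsible, as in \Cref{claim.non.collapsible.general}. Choose $\widetilde z\in(\widetilde x,\widetilde y)$ with $\widetilde\pi(\widetilde z)=w$. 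By the intermediate value theorem, every point of $(\widetilde\pi(\widetilde x),w)$ has a preimage in $(\widetilde x,\widetilde z)$ and another in $(\widetilde z,\widetilde y)$. This alone gives no contradiction, because the second preimage may be atypical. To transfer mass from one branch to the other we use stable holonomy.

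\textbf{Holonomy between the branches.} The arc $\widetilde\pi([\widetilde x,\widetilde y])$ folds back on itself. Lifted $f$-stable leaves through points of $(\widetilde\pi(\widetilde x),w)$ therefore cross this arc twice. Using \Cref{cor:infinite.diam}, \Cref{prop:leafwise.surjectivity} and the homotopy/intersection-number argument of \Cref{pro:top.transv.int.general}, in the sharp form of \Cref{rem:finite.intersection}, we get the following. For $\mu$-typical $y'$ whose lifted stable leaf $\widetilde W^-(\widetilde y';\widetilde g)$ projects onto such an $f$-stable leaf, that leaf topologically crosses both branches $(\widetilde x,\widetilde z)$ and $(\widetilde z,\widetilde y)$. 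By Ben Ovadia's product structure for $\mu$ \cite{Ovadia2023}, these $y'$ form a set of positive measure near $x$. By \Cref{thm:sard.BCS}, whose non-transverse set has zero transverse dimension while $\mu$ has positive transverse dimension, we may also assume both crossings are transverse.

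\textbf{Contradiction.} Let $a\in(\widetilde x,\widetilde z)$ and $b\in(\widetilde z,\widetilde y)$ be the two points where such a stable leaf crosses the branches. Then $\widetilde\pi(a)$ and $\widetilde\pi(b)$ lie on the same lifted $f$-stable leaf and the same lifted $f$-unstable leaf. Lifted leaves of $f$ meet in at most one point, so $\widetilde\pi(a)=\widetilde\pi(b)$. The stable holonomy from one branch to the other, restricted to a Pesin block, is $C^1$. By the product structure it maps a set of positive conditional measure on $(\widetilde x,\widetilde z)$ to a set of positive conditional measure on $(\widetilde z,\widetilde y)$. We thus obtain two disjoint sets of typical points, both of positive conditional measure on the same leaf, on which $\pi$ agrees pairwise. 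This contradicts injectivity on $X_0\cap W^+(x)$.

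\textbf{Main obstacle.} The hardest step is producing a positive-measure family of stable leaves crossing \emph{both} branches topologically. This is where the planar geometry of the lifted fold and the product structure must be combined carefully; the key is choosing the crossing $f$-stable leaves at distance more than $3C$ from the fold tips so that \Cref{rem:finite.intersection} applies on each branch.
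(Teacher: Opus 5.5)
Your overall strategy matches the paper's: produce a fold, use \Cref{rem:finite.intersection} to make $g$-stable leaves through typical points of one branch cross the other branch, upgrade to transverse crossings with \Cref{thm:sard.BCS}, transport positive conditional measure by stable holonomy via Ben Ovadia, and contradict leafwise injectivity on typical points. However, the step you flag as the main obstacle is left unresolved, and as written it can fail.

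\Cref{rem:finite.intersection} needs an $f$-stable leaf that meets the common image $[\widetilde\pi(\widetilde x),w]$ of the two branches at a point more than $3C$ from both ends. So this image must have length larger than $6C$. Nothing in your construction guarantees this. For a short fold there is no topological reason for the $g$-stable leaf of a point on one branch to reach the other branch, since $\widetilde\pi$ is only $C$-close to the identity.

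The missing idea is to push the fold forward. Since $X$ is invariant and $\widetilde\pi\circ\widetilde g=\widetilde f\circ\widetilde\pi$, the arc $\widetilde g^n([\widetilde x,\widetilde y])$ is again a fold based at a point of $\widetilde X$, and its image is stretched by $\lambda_0^n$. The paper takes $n$ with $\diam_u>20C$ and extracts branches $J_1^+,J_2^+$ whose common image has length $>7C$. It then chooses an atom $\xi^+(w)\subset J_1^+$ on which $\widetilde X$ has full conditional measure and whose typical points project at distance $>3C$ from the tips. That atom supplies the positive conditional measure set $B'$ on the branch itself, rather than a positive-measure family of points $y'$ ``near $x$''.

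There are also three smaller points to repair.
\begin{itemize}
\item \Cref{claim.non.collapsible.general} only says the two-sided arc $W^+_\eps(x;g)$ is not collapsed. It does not exclude $\widetilde\pi$ being constant on the one-sided arc $[\widetilde x,\widetilde y]$. The paper gets nondegeneracy from \Cref{claim:two.sided.approximation}. Conditionals are non-atomic and only countably many support points are one-sided, so a.e.\ $\widetilde x$ is accumulated from both sides by points of $\widetilde X$. Injectivity on $\widetilde X$ then forces $\widetilde\pi([\widetilde x,\widetilde y])$ to be nondegenerate.
\item Because $\xi$ is increasing, the atoms $g^{-n}(\xi(g^n x))$ shrink to $x$. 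The exhaustion of $W^+(x;g)$ is by $g^{n}(\xi(g^{-n}x))$, which is what the paper uses.
\item Your final contradiction needs $h^-(B')$ to contain a point of $X_0$. Positive conditional measure gives this only if the relevant atoms along that leaf give full weight to $X_0$. This is a condition on $x$ that must be built into $X$ by removing a null set, namely the paper's $\bigcup_{N\geq 0}\widetilde g^N(E^+)$. ``Density points of the conditional measures'' is not the right condition.
\end{itemize}
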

\begin{proof}
 We first reduce $X$, if necessary, so that Proposition \ref{prop:injectivity.unstables} holds
simultaneously at every iterate of every point of $X$. Thus, if $\xi^\pm$ are $g^{\pm 1}$-increasing partitions subordinate to $W^\pm$, then $\pi$ is injective on
$X\cap\xi^\pm(x)$ for every $x\in X$.

Since
\[
W^+(x;g)
=
\bigcup_{n\geq 0}
g^n\bigl(\xi^+(g^{-n}(x))\bigr),
\]
the invariance of $X$ implies that
\[
\left.\pi\right|_{X\cap W^+(x;g)}
\]
is injective. \par
 After reducing \(X\) and making it invariant, we may assume that Proposition \ref{prop:leafwise.surjectivity} holds for every point of \(X\), and that \(\widetilde\pi\) is injective on
\[
\widetilde X\cap\widetilde W^\pm(\widetilde x;g)
\]
for every lift \(\widetilde x\) of a point \(x\in X\). $\widetilde X$ is the lift of $X$ to the universal cover. In particular, for all $x,y\in X$ there exist lifts $\widetilde x,\widetilde y$ such that 
$$\widetilde W^+(\widetilde x;\widetilde g)\transv_{top}\widetilde W^-(\widetilde y;\widetilde g)\ne\varnothing.$$
\par
\begin{claim}\label{claim:two.sided.approximation}
After removing a set of zero measure, every point
$y\in X\cap\xi^\pm(x)$ is accumulated by
$X\cap\xi^\pm(x)$ from both sides in $\xi^\pm(x)$.
\end{claim}

\begin{proof}
Fix an atom $\xi^\pm(x)$, and let $N_x^\pm$ be the set of points in
$\operatorname{supp}\mu_x^{\xi^\pm}$ which are accumulated by this
support from only one side. Every point of $N_x^\pm$ is an endpoint
of a component of
\[
\xi^\pm(x)\setminus\operatorname{supp}\mu_x^{\xi^\pm}.
\]
Hence $N_x^\pm$ is countable. Since the conditional measure
$\mu_x^{\xi^\pm}$ is non-atomic,
\[
\mu_x^{\xi^\pm}(N_x^\pm)=0.
\]
Moreover, $\mu_x^{\xi^\pm}(X\cap\xi^\pm(x))=1$. It follows that,
after removing a null set, every point of $X\cap\xi^\pm(x)$ is
accumulated by $X\cap\xi^\pm(x)$ from both sides.
\end{proof}

\smallskip
Let $\widetilde x\in \widetilde X$ and assume there exists $\widetilde y\in \widetilde W^+(\widetilde x;\widetilde g)$ such that $\widetilde\pi(\widetilde x)=\widetilde\pi(\widetilde y)$. Call $J^+$ the unstable interval $[\widetilde x,\widetilde y]$ contained in $\widetilde W^+(\widetilde x;\widetilde g)$ having $\widetilde x,\widetilde y $ as endpoints. From \Cref{claim:two.sided.approximation} it follows that $\widetilde x$ is approximated in $J^+$ by points of $\widetilde X$. This implies that $\widetilde \pi(J^+)$ is nondegenerate. Thus, there exists an iterate of $J^+$, which we continue to call $J^+$, such that $\widetilde\pi(J^+)=I^u$ satisfies $\diam_u(I^u)>20C$. There are disjoint intervals $J^+_1,J^+_2\subset J^+$ such that $\widetilde\pi(J^+_1)=\widetilde\pi(J^+_2)$, $\widetilde\pi(\widetilde x)\in\partial\widetilde\pi(J^+_i)$, and $\diam_u(\widetilde\pi(J^+_i))>7C.$\par
By an abuse of notation, continue to call $\xi^+$ the subordinate partition to $\widetilde W^+$ and $\mu^{\xi^+}$ the family of conditional measures.
We may assume, passing to a further iterate if necessary, that there exists $w\in J^+_1\cap \widetilde X$ such that $\mu^{\xi^+}_w(\widetilde X\cap \xi^+(w))=1$ and $d_f(\widetilde\pi(w'),\partial\widetilde\pi( J^+_1))>3C$ for every $w'\in \xi^+(w)\cap \widetilde X$.  \par
By our choice of $x$ and by \Cref{rem:finite.intersection}, 
$$W^-(w';g)\transv_{top} J^+_i\ne\varnothing\quad i=1,2\quad\forall w'\in \xi^+(w)\cap \widetilde X.$$
By the argument used in the proof of \Cref{prop:Bernoulli}, there is a $\mu^{\xi^+}_w$-positive measure set $B\subset X\cap\xi^+(w)$ such that 

$$W^-(w')\transv J^+_i\ne\varnothing\quad i=1,2\quad \forall w'\in B.$$
This means that the stable holonomy map $h^-$ between a subset of $J^+_1$ and $J^+_2$ is well-defined for a positive measure set $B'\subset B\subset J^+_1$. By Ben Ovadia's result on product measures, \cite[Theorems~5.7 and~6.1]{Ovadia2023}, the unstable conditional
measures of $\mu$ form an absolutely continuous family under local stable holonomies. This means that 
$$\mu^{\xi^+}_{h^-(w)}(h^-(B'))>0.$$
For each $w'\in B'$, $$w', h^-(w')\in \widetilde W^+(\widetilde x;\widetilde g)\cap \widetilde W^-(\widetilde w';\widetilde g).$$
\Cref{prop.semiconj} implies that $$\widetilde\pi(w')=\widetilde\pi(h^-(w'))\quad\forall w'\in B'.$$
Therefore, $$h^-(B')\cap \widetilde X=\varnothing.$$
This fold-holonomy argument and the resulting contradiction are illustrated in Figure~\ref{fig:fold.holonomy}.

\begin{figure}   
    \centering

\begin{tikzpicture}[
    scale=.95,
    transform shape,
    x=1cm,
    y=1cm,
    >=Latex,
    font=\small,
    panel/.style={draw=softgray!55, rounded corners=3pt, line width=.45pt},
    stable/.style={draw=softgray!65, densely dotted, line width=.8pt},
    unstable/.style={draw=black!72, line width=1.15pt},
    maparrow/.style={->, line width=.9pt, draw=black!70},
    note/.style={align=center, inner sep=2pt}
]

\draw[panel] (-.25,2.80) rectangle (9.30,8.15);
\node[anchor=west,font=\bfseries] at (0.00,7.85)
    {(a) Fold and stable holonomy in $\widetilde M$};

\draw[unstable]
    (0.45,6.65)
    .. controls (2.20,6.92) and (6.85,6.88) .. (8.15,6.62)
    .. controls (9.10,6.42) and (9.05,4.55) .. (8.05,4.36)
    .. controls (6.70,4.10) and (2.15,4.12) .. (0.45,4.38);

\fill (0.45,6.65) circle (1.7pt);
\fill (0.45,4.38) circle (1.7pt);
\node[anchor=east] at (0.35,6.65) {$\widetilde x$};
\node[anchor=east] at (0.35,4.38) {$\widetilde y$};
\node[note,anchor=west] at (0.58,5.53)
    {$J^+=[\widetilde x,\widetilde y]$\\[-1pt]
     $\widetilde\pi(\widetilde x)=\widetilde\pi(\widetilde y)$};

\draw[branchblue,line width=2.8pt,line cap=round]
    (2.10,6.78) .. controls (3.65,6.86) and (6.20,6.84) .. (7.55,6.72);
\draw[branchorange,line width=2.8pt,line cap=round,dashed]
    (7.55,4.45) .. controls (6.20,4.31) and (3.65,4.30) .. (2.10,4.38);
\node[branchblue,font=\bfseries,above] at (2.75,6.80) {$J_1^+$};
\node[branchorange,font=\bfseries,below] at (2.75,4.36) {$J_2^+$};

\foreach \u/\v in {3.05/3.20,3.62/3.78,4.19/4.36,4.76/4.94,5.33/5.52,5.90/6.10} {
    \draw[stable]
       (\u,6.83) .. controls ({\u+.08},5.95) and ({\v-.08},5.18) .. (\v,4.32);
}

\foreach \u/\v in {3.05/3.20,3.62/3.78,4.19/4.36,4.76/4.94,5.33/5.52,5.90/6.10} {
    \fill[branchblue] (\u,6.83) circle (2.35pt);
    \fill[branchorange] (\v,4.32) circle (2.35pt);
}
\node[branchblue,above=2pt] at (4.55,6.87)
    {$B'\subset J_1^+\cap\widetilde X$};
\node[branchorange,below=2pt] at (4.83,4.28)
    {$h^-(B')\subset J_2^+$};

\draw[branchorange,->,line width=1.25pt]
    (4.19,6.78) .. controls (4.28,5.91) and (4.29,5.18) .. (4.36,4.38);
\node[fill=white,text=branchorange,inner sep=1.5pt] at (4.28,5.58) {$h^-$};
\node[softgray,rotate=82] at (6.48,5.58)
    {$\widetilde W^-(w';\widetilde g)$};

\node[note,branchorange] at (7.10,3.45)
    {\\
     $h^-(B')\cap\widetilde X=\varnothing$};

\draw[panel] (10.20,2.80) rectangle (16.35,8.15);
\node[anchor=west,font=\bfseries] at (10.45,7.85)
    {(b) Image under $\widetilde\pi$};

\draw[maparrow] (9.38,5.70) -- node[above] {$\widetilde\pi$} (10.10,5.70);

\draw[black!55,line width=.9pt] (10.72,5.46) -- (15.86,5.46);
\draw[targetgreen,line width=3.2pt,line cap=round] (11.38,5.46) -- (15.18,5.46);
\draw[branchorange,line width=1.15pt,dashed] (11.38,5.46) -- (15.18,5.46);
\fill[targetgreen] (13.28,5.46) circle (2.8pt);
\node[above=5pt] at (13.28,5.46)
    {$I^u\subset\widetilde W^u(\widetilde\pi(\widetilde x);\widetilde f)$};
\node[note] at (13.28,4.55)
    {$\widetilde\pi(J_1^+)=\widetilde\pi(J_2^+)\subset I^u$};
\node[note] at (13.28,3.65)
    {$\widetilde\pi(w')=\widetilde\pi(h^-(w'))$};

\draw[panel] (1.50,-1.10) rectangle (15.00,2.15);
\node[anchor=west,font=\bfseries] at (1.75,1.83)
    {(c) After a backward iterate};

\draw[maparrow] (6.20,2.70) -- node[right] {$\widetilde g^{-N}$} (6.20,2.22);

\draw[black!75,line width=1.15pt] (2.40,0.62) -- (14.10,0.62);
\draw[black!75,line width=.8pt] (2.40,0.47) -- (2.40,0.77);
\draw[black!75,line width=.8pt] (14.10,0.47) -- (14.10,0.77);
\node[anchor=west,above=6pt] at (2.40,0.62) {$\xi^+(x_N)$};

\foreach \x in {4.45,4.82,5.19,5.56,5.93,6.30} {
    \fill[branchblue] (\x,0.62) circle (2.35pt);
}
\foreach \x in {9.00,9.37,9.74,10.11,10.48,10.85} {
    \fill[branchorange] (\x,0.62) circle (2.35pt);
}
\node[branchblue,align=center,below=7pt] at (5.38,0.62)
    {$\widetilde g^{-N}(B')\subset\widetilde X$};
\node[branchorange,align=center,below=7pt] at (9.92,0.62)
    {$\widetilde g^{-N}(h^-(B'))\subset\widetilde M\setminus\widetilde X$};

\node[note,anchor=east] at (14.55,1.50)
    {$\displaystyle
      \mu_{x_N}^{\xi^+}\!\left(\xi^+(x_N)\setminus\widetilde X\right)>0$\\[-1pt]
     \textit{contradiction}};

\end{tikzpicture}

\caption{ The fold-holonomy mechanism in \Cref{prop:injectivity.unstable}. Stable holonomy sends the positive conditional-measure set \(B'\subset J_1^+\cap\widetilde X\) to a positive conditional-measure subset of \(J_2^+\setminus\widetilde X\). After a backward iterate, both sets lie in the same atom of \(\xi^+\), giving a contradiction.} \label{fig:fold.holonomy}
\end{figure}

Iterating backwards if necessary, there exists $N>0$ and $x_N=\widetilde g^{-N}(\widetilde x)$, such that $g^{-N}(B')\cup g^{-N}(h^-(B'))\subset \xi^+(x_N)$. 
Therefore, for every $\widetilde x\in \widetilde X$ such that there is $\widetilde y\in \widetilde W^+(\widetilde x;\widetilde g)$ with $\widetilde\pi(\widetilde x)=\widetilde\pi(\widetilde y)$, there exists $N>0$ such that
$$\mu^{\xi^+}_{x_N}(\xi^+(x_N)\setminus \widetilde X)>0.$$
Let
\[
E^+=
\left\{
z\in \widetilde M:
\mu_z^{\xi^+}\bigl(\xi^+(z)\setminus \widetilde X\bigr)>0
\right\}.
\]
Since $\widetilde X$ is a conull set, disintegration along $\xi^+$ gives $\mu(E^+)=0$.

The preceding argument shows that if $\widetilde x\in \widetilde X$ and there exists
$\widetilde y\in \widetilde W^+(\widetilde x;\widetilde g)\setminus\{\widetilde x\}$ such that $\widetilde \pi(\widetilde x)=\widetilde\pi(\widetilde y)$, then
$\widetilde g^{-N}(\widetilde x)\in E^+$ for some $N\geq0$. Consequently, the set of such
points $\widetilde x$ is contained in
\[
\bigcup_{N\geq0}\widetilde g^N(E^+),
\]
which has zero measure. Removing this set and all its iterates
from $\widetilde X$, we obtain an invariant full-measure set for which
\[
\widetilde \pi^{-1}(\widetilde\pi(x))\cap\widetilde W^+(\widetilde x;\widetilde g)=\{\widetilde x\}.
\]
Applying the same argument to $\widetilde g^{-1}$ proves the stable assertion.
\end{proof}

\begin{corollary}\label{cor:isomorphism}
The map $\pi: (g,M,\mu)\to (f,M,\pi_*\mu)$ is a metric isomorphism.
\end{corollary}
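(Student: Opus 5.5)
The plan is to show that $\pi$ is injective on a $g$-invariant set of full $\mu$-measure. Since $\pi$ is continuous, the image of an invariant Borel set on which $\pi$ is injective is Borel and $f$-invariant. The inverse is then measurable, and it intertwines $f$ with $g$. This gives a metric isomorphism onto $(M,\pi_*\mu)$.

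Let $X$ be the invariant full-measure set provided by \Cref{prop:injectivity.unstable}, intersected with the full-measure set of \Cref{prop:leafwise.surjectivity}. Suppose $x,y\in X$ satisfy $\pi(x)=\pi(y)$. Choose lifts with $\widetilde\pi(\widetilde x)=\widetilde\pi(\widetilde y)=:\widetilde z$. By \Cref{prop:leafwise.surjectivity},
\[
\widetilde\pi\bigl(\widetilde W^+(\widetilde x;\widetilde g)\bigr)=\widetilde W^u(\widetilde z;\widetilde f),\qquad
\widetilde\pi\bigl(\widetilde W^-(\widetilde y;\widetilde g)\bigr)=\widetilde W^s(\widetilde z;\widetilde f).
\]
These two lifted leaves of $\widetilde f$ meet at $\widetilde z$, and that intersection is a genuine crossing. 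By \Cref{rem:finite.intersection}, $\widetilde W^+(\widetilde x;\widetilde g)$ and $\widetilde W^-(\widetilde y;\widetilde g)$ have a topologically transverse intersection point $\widetilde w$.

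The main step is to show that $\widetilde\pi(\widetilde w)=\widetilde z$. The point $\widetilde\pi(\widetilde w)$ lies in $\widetilde W^u(\widetilde z;\widetilde f)\cap\widetilde W^s(\widetilde z;\widetilde f)$. Lifted stable and unstable leaves of $\widetilde f$ meet in at most one point; this is the product structure in the universal cover, and it also holds through singular leaves when one works with the prongs containing the point. Hence $\widetilde\pi(\widetilde w)=\widetilde z$. Now $\widetilde w\in\widetilde W^+(\widetilde x;\widetilde g)$ has the same $\widetilde\pi$-image as $\widetilde x$, so \Cref{prop:injectivity.unstable} gives $\widetilde w=\widetilde x$. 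Likewise, $\widetilde w\in\widetilde W^-(\widetilde y;\widetilde g)$ has the same image as $\widetilde y$, so $\widetilde w=\widetilde y$. Therefore $\widetilde x=\widetilde y$, and hence $x=y$.

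This proves injectivity on $X$. The measurable inverse $\pi(X)\to X$ then yields the isomorphism, since $\pi_*\mu(\pi(X))=\mu(X)=1$.

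The step I expect to be delicate is the uniqueness of the intersection of lifted $\widetilde f$-leaves when $\widetilde z$ is a singular point or lies on a singular leaf. In that case I would argue with the unique prong of each leaf. Alternatively, I would use \Cref{prop.semiconj}: $\widetilde w$ is forward-asymptotic to $\widetilde y$ and backward-asymptotic to $\widetilde x$, so its lifted orbit stays at bounded distance from that of $\widetilde x$ in both time directions. This yields $\widetilde\pi(\widetilde w)=\widetilde\pi(\widetilde x)$ directly, and it is the cleaner route I would try first.
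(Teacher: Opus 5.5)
Your proposal is correct and follows essentially the same route as the paper. Both show that $\pi$ is injective on the full-measure set $X$ by taking a topologically transverse intersection point $\widetilde w\in\widetilde W^+(\widetilde x;\widetilde g)\cap\widetilde W^-(\widetilde y;\widetilde g)$ from \Cref{rem:finite.intersection} and then applying \Cref{prop:injectivity.unstable} twice. The paper proves $\widetilde\pi(\widetilde w)=\widetilde\pi(\widetilde x)$ exactly by your fallback argument through \Cref{prop.semiconj} (two-sided bounded distance of lifted orbits); your primary argument, that a lifted stable leaf and a lifted unstable leaf of $\widetilde f$ meet in at most one point, is an equally valid variant.
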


\begin{proof}
Let $X\subset M$ be the full measure set obtained in  Propositions \ref{prop:injectivity.unstable} and \ref{prop:leafwise.surjectivity}.\par

We claim that $\pi$ restricted to $X$ is injective. Assume $\pi(x)=\pi(y)$, with $x,y\in X$ and consider lifts 
$\widetilde x, \widetilde y$ of $x,y$ such that $\widetilde\pi(\widetilde x)=\widetilde \pi(\widetilde y)=\widetilde q$. Then , $\widetilde q\in \widetilde W^u(\widetilde q;\widetilde f)\cap \widetilde W^s(\widetilde q;\widetilde f)$. \par
\Cref{rem:finite.intersection} implies that there is a point
$$\widetilde z\in\widetilde W^+(\widetilde x;\widetilde g)\transv_{top} \widetilde W^-(\widetilde y;\widetilde g).$$
\begin{align*}
&\widetilde z\in \widetilde W^+(\widetilde x;\widetilde g)\Rightarrow \sup_{n\leq 0}d(\widetilde g^n(\widetilde x),\widetilde g^n(\widetilde z))<\infty\\
&\widetilde z\in \widetilde W^-(\widetilde y;\widetilde g)\Rightarrow \sup_{n\geq 0}d(\widetilde g^n(\widetilde y),\widetilde g^n(\widetilde z))<\infty,
\end{align*}
Since for every $n$, $\widetilde\pi(\widetilde g^n(\widetilde x))=\widetilde\pi(\widetilde g^n(\widetilde y))$, using that $\widetilde \pi$ is at bounded distance from the identity, we deduce
\[
    \sup_{n\in\mathbb Z}d(\widetilde g^n(\widetilde x),\widetilde g^n(\widetilde z))<\oo
\]
By Proposition \ref{prop.semiconj}, $\widetilde \pi(\widetilde z)=\widetilde\pi(\widetilde x)=\widetilde\pi(\widetilde y)$. Hence $\pi(x)=\pi(y)=\pi(z)$. By \Cref{prop:injectivity.unstable}, $x=z$, and  $y=z$. Then $x=y$.
\end{proof}


Putting everything together, we have concluded the proof of \Cref{mainthm:isomorphism.mu}.\par\smallskip
The proof of \Cref{rem:conservative} for $C^{1+\alpha}$ conservative $g$ follows in a very similar way, using absolute continuity of the conditional measures instead of Ben Ovadia's result.

\begin{remark}
If $g\in C^\infty$ and $\mu=m$, then $\pi$ is differentiable in the Whitney sense \(m\)-almost everywhere.
\end{remark}
The proof of this fact follows exactly as in the proof of Theorem $1.3 (ii)$, Section $5$ of \cite{de_la_Llave_1992}. In that part of the argument, the homeomorphism property of the semiconjugacy $\pi$ is not used; it only uses the identity $\pi_*m=\nu$  with $\nu$ absolutely continuous, and the fact that the  Lyapunov exponents are the same for both systems, which follows from the fact that $h_m(g)=h_\nu(f)$, and Pesin's entropy formula \cite{pesin77} applied to $g,g^{-1}, f$ and $f^{-1}$. That these are the only required conditions is stated explicitly in \cite[Page 313]{de_la_Llave_1992}.

\subsection{Monotonicity of \texorpdfstring{$\pi$}{pi} on invariant manifolds}\label{sec:monotonicity}
Let $g$ be in the hypotheses of \Cref{mainthm:fiber} and $\mu$ be a $g$-invariant entropy-large measure. 
\begin{claim}\label{claim:correspondence.manifolds} Let $X$ be the full-measure set obtained in \Cref{prop:injectivity.unstable}. Then, for every $\widetilde x,\widetilde y\in \widetilde X$, if $\widetilde\pi(\widetilde y)\in\widetilde W^{u/s}(\widetilde \pi(\widetilde x);\widetilde f)$, then $\widetilde y\in \widetilde W^\pm(\widetilde x;\widetilde g)$.
\end{claim}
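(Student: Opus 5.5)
We treat the unstable case; the stable one follows by applying the same argument to $g^{-1}$. Let $\widetilde x,\widetilde y\in\widetilde X$ with $\widetilde\pi(\widetilde y)\in\widetilde W^u(\widetilde\pi(\widetilde x);\widetilde f)$. The plan is to construct a point $\widetilde z$ on $\widetilde W^+(\widetilde x;\widetilde g)$ with the same image as $\widetilde y$, and then to show that $\widetilde z=\widetilde y$.

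First, by \Cref{prop:leafwise.surjectivity}, since $x\in X$ we have
\[
\widetilde\pi\bigl(\widetilde W^+(\widetilde x;\widetilde g)\bigr)=\widetilde W^u(\widetilde\pi(\widetilde x);\widetilde f).
\]
Also $\widetilde\pi(\widetilde W^-(\widetilde y;\widetilde g))=\widetilde W^s(\widetilde\pi(\widetilde y);\widetilde f)$. Both images have infinite intrinsic diameter. The lifted leaves $\widetilde W^u(\widetilde\pi(\widetilde x);\widetilde f)$ and $\widetilde W^s(\widetilde\pi(\widetilde y);\widetilde f)$ meet exactly at $\widetilde\pi(\widetilde y)$, since $\widetilde\pi(\widetilde y)$ lies on both, and lifted stable and unstable leaves of $f$ intersect in at most one point.

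Next, apply \Cref{rem:finite.intersection} with $R_+(x)=R_-(y)=\infty$. It produces
\[
\widetilde z\in\widetilde W^+(\widetilde x;\widetilde g)\transv_{top}\widetilde W^-(\widetilde y;\widetilde g).
\]
We need that $\widetilde\pi(\widetilde z)=\widetilde\pi(\widetilde y)$. Since $\widetilde z\in\widetilde W^+(\widetilde x;\widetilde g)$ and $\widetilde\pi$ is a semiconjugacy, $\widetilde\pi(\widetilde z)$ lies on the unstable leaf through $\widetilde\pi(\widetilde x)$. Since $\widetilde z\in\widetilde W^-(\widetilde y;\widetilde g)$, it also lies on the stable leaf through $\widetilde\pi(\widetilde y)$. (This is the lifted form of the inclusions of \Cref{ssec:pseusoanosovfactors}; the forward/backward orbit bounds of the proof of \Cref{cor:isomorphism} give it via \Cref{prop.semiconj}-type reasoning.) Uniqueness of the intersection of lifted leaves then forces $\widetilde\pi(\widetilde z)=\widetilde\pi(\widetilde y)$.

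Finally, $\widetilde z$ and $\widetilde y$ lie on the same lifted stable manifold $\widetilde W^-(\widetilde y;\widetilde g)$ and have the same image. By \Cref{prop:injectivity.unstable} applied at $\widetilde y\in\widetilde X$ (stable version),
\[
\widetilde\pi^{-1}(\widetilde\pi(\widetilde y))\cap\widetilde W^-(\widetilde y;\widetilde g)=\{\widetilde y\},
\]
so $\widetilde z=\widetilde y$. Hence $\widetilde y\in\widetilde W^+(\widetilde x;\widetilde g)$.

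The step that needs the most care is the identification $\widetilde\pi(\widetilde z)=\widetilde\pi(\widetilde y)$. Here one must check that the lifts chosen in \Cref{rem:finite.intersection} are the ones through $\widetilde x$ and $\widetilde y$ themselves, and not deck translates. One must also check that the leaf-uniqueness statement holds even when $\widetilde\pi(\widetilde y)$ could sit on a singular leaf. The latter can be avoided by discarding from $X$ the $\pi$-preimages of singular leaves, which are $\MME$-null.
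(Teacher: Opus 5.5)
Your proof is correct and follows the paper's argument: \Cref{rem:finite.intersection} produces $\widetilde z\in\widetilde W^+(\widetilde x;\widetilde g)\cap\widetilde W^-(\widetilde y;\widetilde g)$, one shows $\widetilde\pi(\widetilde z)=\widetilde\pi(\widetilde y)$, and the stable half of \Cref{prop:injectivity.unstable} at $\widetilde y$ then forces $\widetilde z=\widetilde y$. For the middle step the paper uses the orbit-bound route via \Cref{prop.semiconj}, which you mention parenthetically; that route makes your singular-leaf precaution unnecessary, so you do not need to shrink $X$ (and lifted stable and unstable leaves of $f$ meet at most once in any case, singular or not).
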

\begin{proof}
Let $\widetilde x,\widetilde y\in \widetilde X$ satisfy 
$$\widetilde\pi(\widetilde y)\in\widetilde W^u(\widetilde\pi(\widetilde x);\widetilde f)$$
By \Cref{rem:finite.intersection}, there exists $$\widetilde z\in \widetilde W^+(\widetilde x;\widetilde g)\cap \widetilde W^-(\widetilde y;\widetilde g).$$
By \Cref{prop.semiconj}, 
$$\widetilde\pi(\widetilde z)=\widetilde\pi(\widetilde y).$$
Then \Cref{prop:injectivity.unstable} implies $\widetilde z=\widetilde y$.
\end{proof}
\begin{proposition}\label{prop:monotonicity}
    There exists a full-measure set $X$ such that for every lift $\widetilde x\in \widetilde X$
$\widetilde\pi$ is monotonous when restricted to $\widetilde W^+(\widetilde x;\widetilde g)$ and $\widetilde W^-(\widetilde x;\widetilde g)$.
\end{proposition}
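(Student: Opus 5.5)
The plan is to reduce monotonicity to the absence of \emph{folds} and to rule out folds by the same stable-holonomy mechanism used in the proof of \Cref{prop:injectivity.unstable}. I take $X$ to be the invariant full-measure set given simultaneously by \Cref{prop:leafwise.surjectivity}, \Cref{prop:injectivity.unstable}, \Cref{claim:two.sided.approximation} and \Cref{claim:correspondence.manifolds}, intersected with a full-measure set of points that are $\mu$-typical for their unstable and stable conditional measures. It then suffices to show the following: for $\widetilde x\in\widetilde X$, the continuous map $\widetilde\pi|:\widetilde W^+(\widetilde x;\widetilde g)\to\widetilde W^u(\widetilde\pi(\widetilde x);\widetilde f)$ has no triple $u<v<w$ in the leaf with $\widetilde\pi(u)=\widetilde\pi(w)\neq\widetilde\pi(v)$. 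Both leaves are lines and the map is continuous, so absence of such triples means every fiber in the leaf is an interval, i.e.\ $\widetilde\pi$ is monotone there. The stable case follows by applying the argument to $g^{-1}$.

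Suppose such a fold exists. By the intermediate value theorem, the two arcs $[u,v]$ and $[v,w]$ both cover the nondegenerate segment $I\subset\widetilde W^u(\widetilde\pi(\widetilde x);\widetilde f)$ between $\widetilde\pi(u)$ and $\widetilde\pi(v)$. The first key step is to locate many points of $\widetilde X$ in this picture. Since $\pi_*\mu=\MME$ and $\pi$ is injective on $X$ (\Cref{cor:isomorphism}), the set $\widetilde\pi(\widetilde X)$ has full $\MME$-measure. The unstable conditionals of $\MME$ have full support on unstable leaves, being given by the transverse measure of the stable foliation. For a typical choice of $\widetilde x$, therefore, a positive-conditional-measure set of points of $I$ are images of points of $\widetilde X$. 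By \Cref{claim:correspondence.manifolds}, these preimages lie on $\widetilde W^+(\widetilde x;\widetilde g)$. By \Cref{prop:injectivity.unstable}, each such image has exactly one preimage in $\widetilde X\cap\widetilde W^+(\widetilde x;\widetilde g)$, whereas it has at least one preimage in each of $[u,v]$ and $[v,w]$.

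Shrinking $I$ and passing to subarcs, I obtain disjoint arcs $J_1,J_2$ with $\widetilde\pi(J_1)=\widetilde\pi(J_2)$ and a positive conditional-measure set $B\subset J_1\cap\widetilde X$. Applying a forward iterate $\widetilde g^n$, I arrange that the images have $d_f$-diameter larger than $7C$ and that $B$ stays at distance greater than $3C$ from the boundary. At this point the argument is exactly that of \Cref{prop:injectivity.unstable}:
\begin{itemize}
\item by \Cref{rem:finite.intersection}, the stable manifolds of points of $B$ topologically cross both $J_1$ and $J_2$;
\item by \Cref{thm:sard.BCS}, a positive-measure subset $B'$ crosses transversely;
\item Ben Ovadia's absolute continuity \cite{Ovadia2023} then gives $\mu^{\xi^+}(h^-(B'))>0$;
\item \Cref{prop.semiconj} gives $\widetilde\pi(h^-(w'))=\widetilde\pi(w')$ for $w'\in B'$, so injectivity on $\widetilde X$ forces $h^-(B')\cap\widetilde X=\varnothing$.
\end{itemize}
After a backward iterate, $B'$ and $h^-(B')$ lie in one atom $\xi^+(x_N)$, so $x_N$ lies in the null set $E^+$. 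Removing $\bigcup_N \widetilde g^N(E^+)$ from $X$ therefore excludes all folds.

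The main obstacle is the first key step: guaranteeing that one branch of the fold carries a set of \emph{positive} unstable conditional measure of points of $\widetilde X$, rather than merely a set of positive $\MME$-conditional measure in the image. I would get this by pushing forward the conditional measure: $\widetilde\pi$ is injective on $\widetilde X\cap\widetilde W^+(\widetilde x;\widetilde g)$ and onto a full-measure part of the $f$-leaf, so $\widetilde\pi_*\mu^{\xi^+}_x$ is equivalent to the $\MME$ unstable conditional on $\widetilde\pi(\xi^+(x))$. Making this rigorous uses that $\pi$ is a metric isomorphism carrying $\xi^+$ to a partition subordinate to $W^u(f)$, after a further backward iterate so that the fold lies inside a single atom. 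A subsidiary difficulty is that $u,v,w$ themselves need not be in $\widetilde X$; this is handled because only the arcs $J_1,J_2$, and not the fold endpoints, enter the holonomy argument.
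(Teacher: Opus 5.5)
Your proposal is correct in outline but much longer than necessary, because you under-use \Cref{prop:injectivity.unstable}. That proposition does not only say that $\widetilde\pi$ is injective on $\widetilde X\cap\widetilde W^+$. It says that for $\widetilde y\in\widetilde X$ the entire leafwise fiber $\widetilde\pi^{-1}(\widetilde\pi(\widetilde y))\cap\widetilde W^+(\widetilde y;\widetilde g)$ equals $\{\widetilde y\}$.

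With that, your first key step already finishes the proof, and this is exactly the paper's argument. Choose $X$ so that, in addition, the unstable conditionals $\nu^u_y$ of $\nu=\MME$ satisfy $\nu^u_y(\pi(X))=1$ for $y\in\pi(X)$. Full support of these conditionals, together with \Cref{claim:correspondence.manifolds}, makes $\widetilde\pi(\widetilde X\cap\widetilde W^+(\widetilde x;\widetilde g))$ dense in $\widetilde W^u(\widetilde\pi(\widetilde x);\widetilde f)$. So a single point $\widetilde y\in\widetilde X$ on the leaf has $\widetilde\pi(\widetilde y)$ in the open interval $I$. By the intermediate value theorem, that value has distinct preimages in the disjoint open arcs $(u,v)$ and $(v,w)$, contradicting the singleton fiber. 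Density (one point) suffices, and no positivity of conditional measure is needed.

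Your second half re-runs the machinery that proved \Cref{prop:injectivity.unstable}: forward iteration to scale $7C$, \Cref{rem:finite.intersection}, \Cref{thm:sard.BCS}, Ben Ovadia's absolute continuity, and the null set $E^+$. Its output is already subsumed by that proposition. A single $w'\in B'\subset\widetilde X$ with $h^-(w')\neq w'$ on the same leaf and $\widetilde\pi(h^-(w'))=\widetilde\pi(w')$ already contradicts it. So the step $h^-(B')\cap\widetilde X=\varnothing$ and the backward iteration into an atom are superfluous.

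Your ``main obstacle'' is therefore an artifact of this route, and your proposed fix for it is not free either. To say $\pi$ carries $\xi^+$ to a partition subordinate to $W^u(f)$, you need $\pi(\xi^+(x))$ to contain a leaf neighborhood of $\pi(x)$. That requires excluding a fold at $x$, i.e.\ essentially the density argument above. If you did want positivity, it is simpler to note that $\widetilde\pi^{-1}(I)\cap(u,v)$ is open in the leaf and contains a point of $\widetilde X$, which may be taken in the support of its unstable conditional.

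In short, your route proves nothing beyond the paper's. It costs a Sard lemma, product structure and another null-set removal, whereas the paper needs only density plus the singleton-fiber property of \Cref{prop:injectivity.unstable}.
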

\begin{proof}
    Let $\nu=\MME=\pi_*\mu$. We choose the full-measure set $X$ satisfying \Cref{prop:injectivity.unstable}

and so that

$$
\nu_y^u(\pi(X))=1
$$

for every $y\in\pi(X)$, where $\nu_y^u$ denotes the unstable conditional measure of $\nu$ at $y$. Since these conditional measures have full support, $\widetilde\pi(\widetilde X\cap \widetilde W^+(\widetilde x;\widetilde g))$ is dense in $\widetilde W^u(\widetilde \pi(x);\widetilde f)$ for every $\widetilde x\in \widetilde X$ that is a lift of $x\in X$. This follows from \Cref{claim:correspondence.manifolds} above. 

We claim that the restriction of $\widetilde\pi$ to $\widetilde W^+(\widetilde x;\widetilde g)$ is monotone for every $\widetilde x\in \widetilde X$. Otherwise, there would exist points

$$
a<c<b
$$

in $\widetilde W^+(\widetilde x;\widetilde g)$ such that

$$
\widetilde \pi(a)=\widetilde\pi(b)\neq\widetilde\pi(c).
$$

Let $I$ be the open interval in $\widetilde W^u(\widetilde\pi(\widetilde x);\widetilde f)$ with endpoints $\widetilde\pi(a)$ and $\widetilde\pi(c)$. For every $y\in I$, the intermediate value theorem, applied first to $[a,c]$ and then to $[c,b]$, gives two distinct points $z_1\in(a,c)$ and $z_2\in(c,b)$ such that

$$
\widetilde\pi(z_1)=\widetilde\pi(z_2)=y.
$$

Consequently,

$$
I\cap\widetilde\pi(\widetilde X\cap\widetilde W^+(\widetilde x;\widetilde g))=\varnothing,
$$

because every point of $\widetilde\pi(\widetilde X\cap \widetilde W^+(\widetilde x;\widetilde g))$ has a singleton fiber. This contradicts the density of $\widetilde \pi(\widetilde X\cap \widetilde W^+(\widetilde x;\widetilde g))$ in $\widetilde W^u(\widetilde\pi(\widetilde x);\widetilde f)$. Therefore, $\widetilde\pi$ is monotone on $\widetilde W^+(\widetilde x;\widetilde g)$.
\end{proof}
The monotonicity for $\pi$ on invariant manifolds in the conservative case of \Cref{rem:conservative} follows in the same way. 
\section{A criterion for cellularity}\label{sec:criterion}

In this part we establish a general criterion to prove cellularity of the fibers of $\pi$. We will use the monotonicity of $\widetilde \pi$ on lifted invariant manifolds to construct stable/unstable polygons in $\widetilde M$,  and trap each fiber inside a decreasing sequence of such polygons. \par

This dynamical machinery will be used to establish the existence of this sequence of polygons, assuming the hypotheses of 
\Cref{mainthm:fiber}.


\subsection{Adapted \texorpdfstring{$su$}{su}-polygons}
\label{ssub:adaptedpolygons}
Let $X$ be an invariant full-measure set on which the conclusions of Propositions \ref{prop:leafwise.surjectivity}, \ref{prop:injectivity.unstable}, and \ref{prop:monotonicity} hold simultaneously for stable and unstable manifolds. Let $\widetilde X$ be its lift to $\widetilde M$. Thus, for every $\widetilde x\in \widetilde X$, 
$$\widetilde\pi(\widetilde W^\pm(\widetilde x;\widetilde g))=\widetilde W^{u/s}(\widetilde\pi(\widetilde x);\widetilde f);$$
the restriction of $\widetilde\pi$ to each of these invariant manifolds is monotone, and 
$$\widetilde\pi^{-1}(\widetilde\pi(\widetilde x))\cap\widetilde W^\pm(\widetilde x;\widetilde g)=\{\widetilde x\}.$$
\par

We use the set $\widetilde X$ to construct polygons around each point of $\widetilde M$. An {\em $su$-polygon} is a closed disk whose boundary is formed by a finite number of stable and unstable segments. Each intersection between a stable and an unstable segment is called a {\em vertex}.

\begin{lemma}[$su$-polygons for $f$]\label{lem:su.polygons.pi.X}
For every \(\widetilde p\in\widetilde M\), there exists 
a neighborhood basis of leafwise convex $su$-polygons such that its vertices belong to $\widetilde\pi(\widetilde X)$.
\end{lemma}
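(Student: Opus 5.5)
The plan is to build the polygons directly from the singular flat structure of the lifted pseudo-Anosov $\widetilde f$, and then move the vertices into $\widetilde\pi(\widetilde X)$ using density. The preliminary fact is that $\widetilde\pi(\widetilde X)$ is dense in $\widetilde M$, and in fact meets every open arc of every lifted leaf in a dense set. Indeed, $\nu=\MME=\pi_*\mu$ has full support and $\nu(\pi(X))=1$. Moreover, for $\widetilde x\in\widetilde X$, the set $\widetilde\pi(\widetilde X\cap\widetilde W^\pm(\widetilde x;\widetilde g))$ is dense in $\widetilde W^{u/s}(\widetilde\pi(\widetilde x);\widetilde f)$, as shown in the proof of \Cref{prop:monotonicity}; this uses that the conditionals of $\nu$ have full support, together with \Cref{claim:correspondence.manifolds}. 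Consequently, the leaves through points of $\widetilde\pi(\widetilde X)$ are dense in the leaf space, and along any such leaf the points of $\widetilde\pi(\widetilde X)$ are dense.

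Fix $\widetilde p$ and a small $\eps>0$, measured in the flat metric $d_f$. I would split into two cases.

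\emph{Regular point.} If $\widetilde p$ is not a singularity, take a foliated chart around it in which the lifted foliations are horizontal and vertical lines. Choose a point $\widetilde q_1\in\widetilde\pi(\widetilde X)$ lying to the lower left of $\widetilde p$ and within $\eps$ of it. Along the unstable leaf of $\widetilde q_1$, pick $\widetilde q_2\in\widetilde\pi(\widetilde X)$ to the right of $\widetilde p$. The vertical leaf through $\widetilde q_2$ contains points of $\widetilde\pi(\widetilde X)$ above $\widetilde p$; but we also need the fourth corner in $\widetilde\pi(\widetilde X)$. For this I would use \Cref{claim:correspondence.manifolds} and \Cref{prop:leafwise.surjectivity} to show that the intersection of the stable leaf of one good point with the unstable leaf of another good point is again a good point. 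Indeed, with $\widetilde x,\widetilde y\in\widetilde X$, \Cref{rem:finite.intersection} gives $\widetilde z\in\widetilde W^+(\widetilde x;\widetilde g)\cap\widetilde W^-(\widetilde y;\widetilde g)$ whose image is the corner. It may not be in $\widetilde X$ itself. Hence I would redefine ``vertex'' to be the image of such a bracket point, or enlarge $\widetilde X$ to be saturated by brackets; the latter seems cleanest, by checking that the three properties persist for brackets via the product structure argument. Taking the four corners as brackets of good points, the rectangle is contained in the $\eps$-ball and is leafwise convex.

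\emph{Singular point.} At a $k$-prong singularity, the same construction is done in each of the $k$ sectors. This produces a $2k$-gon with alternating stable and unstable sides, with vertices in the sectors chosen symmetric enough to close up. Leafwise convexity here means that each side is a geodesic leaf segment and the interior angles are $\pi/2$ in the flat metric.

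The main obstacle, I expect, is guaranteeing that the \emph{corners}, not just the sides, lie in $\widetilde\pi(\widetilde X)$, since intersections of two good leaves need not a priori be images of good points. I would first try to pass to a full-measure invariant set closed under local brackets. If that fails, I would choose the sides along leaves of good points and use that $\widetilde\pi(\widetilde X)$ is dense within each leaf to perturb one side slightly, so that its endpoint lands in $\widetilde\pi(\widetilde X)$ while the adjacent side is simultaneously chosen to be the transverse leaf through that endpoint. Building the polygon inductively vertex by vertex, each new side is taken through the previously chosen good vertex. The closing condition on the last side is achieved by taking the last vertex to be the bracket of the first and penultimate good vertices, and this is where the bracket-closure issue must be settled.
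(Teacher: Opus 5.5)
\textbf{There is a genuine gap, and it is the one you flag yourself: nothing in the proposal produces the closing vertex.}

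Your vertex-by-vertex construction uses only the density of $\widetilde\pi(\widetilde X)$ along leaves, and density cannot do this job. Once two unstable sides $U_1,U_2$ through good points are fixed, a stable side joining them is determined by either endpoint. What you need is a stable holonomy $h\colon I\subset U_1\to J\subset U_2$ with $h(I\cap\widetilde\pi(\widetilde X))\cap J\cap\widetilde\pi(\widetilde X)\neq\varnothing$. But the image of a dense set under a homeomorphism and another dense set may be disjoint (think of $\mathbb{Q}$ and $\R\setminus\mathbb{Q}$). So the last, determined corner has no reason to be good.

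Your bracket-saturation idea is not justified either. Take $z\in\widetilde W^+(\widetilde x;\widetilde g)\cap\widetilde W^-(\widetilde y;\widetilde g)$ with $\widetilde x,\widetilde y\in\widetilde X$. Leafwise surjectivity and monotonicity do transfer to $z$, because they are properties of the leaves. The singleton property $\widetilde\pi^{-1}(\widetilde\pi(z))\cap\widetilde W^\pm(z;\widetilde g)=\{z\}$, however, does not follow. $\widetilde\pi$ is merely monotone on these leaves, so it can collapse nondegenerate intervals (each containing at most one point of $\widetilde X$), and nothing prevents $z$ from lying in one. That property at the vertices is exactly what \Cref{lem:su.polygons.X} uses afterwards: uniqueness of the lifted sides, and adjacent lifted sides meeting in a single point.

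\textbf{The missing idea is to replace density by full conditional measure plus holonomy.} As in the proof of \Cref{prop:monotonicity}, $X$ can be chosen so that $\nu^u_y(\pi(X))=1$ for every $y\in\pi(X)$. Hence $\widetilde\pi(\widetilde X)$ has full conditional measure on every unstable leaf through a good point, and stable holonomy between unstable arcs preserves the class of these conditionals.

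The paper proceeds as follows.
\begin{enumerate}
\item In a prong chart at $\widetilde p$, it fixes $2\ell$ alternating leaves through good points bounding a small cell $P\ni\widetilde p$. Here $\ell$ is the number of prongs, so $\ell=2$ at regular points.
\item It keeps the unstable leaves $U^*_i$ fixed. For each $i$ it takes a stable holonomy $h_i\colon I_i\to J_i$ from $U^*_i$ to $U^*_{i+1}$ near the corresponding corner of $P$.
\item Since $A_i=I_i\cap\pi(X)$ has full measure in $I_i$, the set $h_i(A_i)\cap J_i\cap\pi(X)$ has full measure in $J_i$. One picks $y_i$ in it and sets $x_i=h_i^{-1}(y_i)$, obtaining a stable side with both endpoints good.
\end{enumerate}
The stable sides are chosen independently of one another, and the unstable sides are the subarcs of the fixed $U^*_i$ from $y_{i-1}$ to $x_i$. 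So no closing condition ever arises, and regular and singular points are handled by the same construction.
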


\begin{proof}
Let \(l\) be the number of stable, equivalently unstable, prongs at
\(\widetilde p\). Using a sufficiently small prong chart centered at
\(\widetilde p\), choose \(2\ell\) segments
$$S^*_1,U^*_1,\dots,S^*_\ell,U^*_\ell$$
alternating between stable and unstable leaves, passing through points of
\(\widetilde\pi(\widetilde X)\), and bounding a cell \(P\) containing
\(\widetilde p\). This is the minimal number of segments that an $su$-polygon containing $\widetilde p$ can have.\par
For each $i$, choose small intervals
$I_i\subset U^*_i$ and $J_i\subset U^*_{i+1}$, close to the vertices joined
by the stable side between $U^*_i$ and $U^*_{i+1}$, such that the stable
holonomy
\[
h_i\colon I_i\longrightarrow J_i
\]
is defined and all its holonomy arcs are contained in $P$.

Set
\[
A_i=I_i\cap\pi(X).
\]
The set $A_i$ has full unstable conditional measure in $I_i$. Since the 
stable holonomy preserves the unstable conditional measure class,
\[
h_i(A_i)\cap J_i\cap\pi(X)
\]
has full unstable conditional measure in $J_i$. We may therefore choose
\[
y_i\in h_i(A_i)\cap J_i\cap\pi(X)
\]
and set
\[
x_i=h_i^{-1}(y_i).
\]
Then
\[
x_i,y_i\in\pi(X),
\]
and $x_i$ and $y_i$ are joined by a stable arc contained in $P$.

For each $i$, join $y_{i-1}$ to $x_i$ by the unstable arc contained in
$U^*_i$. If the intervals $I_i$ and $J_i$ are chosen sufficiently close
to the corresponding vertices of $P$, the curve
\[
\Gamma=
\bigcup_{i=1}^{l}
\left(
[y_{i-1},x_i]_{U^*_i}
\cup
[x_i,y_i]_{S^*_i}
\right)
\]
is an $su$-polygon contained in $P$ and containing $p$.
Moreover, all its vertices
\[
x_1,y_1,\ldots,x_\ell,y_\ell
\]
belong to $\pi(X)$. For future use, we rename $S^*_i$, $U^*_i$, so that 
$$\partial P=\bigcup_{i=1}^\ell S^*_i\cup U^*_i.$$
\end{proof}

\begin{lemma}[$su$-polygons for the fiber]\label{lem:su.polygons.X} For each $\widetilde p\in\widetilde M$ and for each $su$-polygon $P$ found in \Cref{lem:su.polygons.pi.X} there exists an $su$-polygon $Q$ such that
$$\widetilde\pi(\partial Q)=\partial P\quad\text{and}\quad \widetilde\pi^{-1}(\widetilde p)\cap Q\ne\emptyset$$
\end{lemma}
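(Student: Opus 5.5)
We lift the sides of $P$ one at a time, using the vertices in $\widetilde\pi(\widetilde X)$ as anchors. Write the vertices of $P$ as $x_1,y_1,\dots,x_\ell,y_\ell$. By \Cref{lem:su.polygons.pi.X} they lie in $\widetilde\pi(\widetilde X)$, so we may choose $\widetilde x_i,\widetilde y_i\in\widetilde X$ with $\widetilde\pi(\widetilde x_i)=x_i$ and $\widetilde\pi(\widetilde y_i)=y_i$. Since fibers of $\widetilde\pi$ through points of $\widetilde X$ meet their own invariant manifolds in a single point, these lifts are canonical once we work inside the correct invariant manifolds.

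The first step is to show that consecutive lifted vertices lie on a common lifted invariant manifold. Because $x_i$ and $y_i$ lie on a common stable leaf of $\widetilde f$, \Cref{claim:correspondence.manifolds} gives $\widetilde y_i\in\widetilde W^-(\widetilde x_i;\widetilde g)$. Similarly $\widetilde x_{i+1}\in\widetilde W^+(\widetilde y_i;\widetilde g)$. We use this to fix the lifts inductively: choose $\widetilde x_1$, and then each subsequent vertex is the unique point of $\widetilde X$ over it on the relevant leaf. We must then check that the cycle closes, i.e. that after going around we return to $\widetilde x_1$ rather than to another preimage. This holds because $\widetilde\pi^{-1}(x_1)\cap\widetilde W^+(\widetilde y_\ell;\widetilde g)$ is a single point, and the claim applied to $\widetilde x_1$ and $\widetilde y_\ell$ forces that point to be $\widetilde x_1$.

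The next step is to define the sides of $Q$. Let $\widetilde S_i=[\widetilde x_i,\widetilde y_i]\subset\widetilde W^-(\widetilde x_i;\widetilde g)$ and $\widetilde U_i=[\widetilde y_{i-1},\widetilde x_i]\subset\widetilde W^+$. By \Cref{prop:monotonicity}, $\widetilde\pi$ is monotone on these leaves and sends endpoints to endpoints, so $\widetilde\pi(\widetilde S_i)=S_i^*\cap\partial P$ and likewise for the unstable sides. Hence $\widetilde\pi(\Gamma)=\partial P$, where $\Gamma$ is the union of the $\widetilde S_i$ and $\widetilde U_i$.

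We must also show $\Gamma$ is a Jordan curve. Two sides meeting away from a common vertex would have their images meeting too. Images of nonadjacent sides of $P$ are disjoint, so two such sides of $\Gamma$ are disjoint. The only remaining case is adjacent sides meeting at a point other than their shared vertex. A stable and an unstable side meeting at two points would produce two points with the same image on one leaf through the shared vertex, which lies in $\widetilde X$. This is impossible by injectivity of $\widetilde\pi$ on the fiber through a point of $\widetilde X$ and by monotonicity. So $\Gamma$ is simple, and by Schoenflies it bounds a closed disk $Q$, which is an $su$-polygon.

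The main obstacle is showing $\widetilde\pi^{-1}(\widetilde p)\cap Q\neq\emptyset$. I would use a degree argument, as in \Cref{pro:top.transv.int.general}. The map $\widetilde\pi|_\Gamma\colon\Gamma\to\partial P$ is monotone on each side, so it has degree $\pm1$. If $\widetilde p$ had no preimage in $Q$, then $\widetilde\pi|_Q$ would map into $\R^2\setminus\{\widetilde p\}$. In that case $\widetilde\pi|_\Gamma$ would be null-homotopic in $\R^2\setminus\{\widetilde p\}$. But $\partial P$ winds once around $\widetilde p\in\inte P$, which is a contradiction. The delicate point is the degree computation: one must check that monotonicity with endpoint matching makes $\widetilde\pi|_\Gamma$ homotopic, within $\partial P$, to a homeomorphism. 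This follows by straightening each side relative to its endpoints.
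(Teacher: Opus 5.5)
Your proposal is correct and follows essentially the same route as the paper. You lift the vertices to the unique points of $\widetilde X$ above them and use \Cref{claim:correspondence.manifolds} to place consecutive lifts on common $\widetilde g$-invariant leaves; you then rule out extra intersections of sides (nonadjacent sides because the corresponding sides of $P$ are disjoint, adjacent sides via \Cref{prop:injectivity.unstable} at the shared vertex) and conclude with the degree $\pm1$ argument, where your straightening of each side relative to its endpoints spells out the degree claim that the paper only asserts.
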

\begin{proof}

    Since the vertices of $P$ belong to $ Y=\widetilde\pi(\widetilde X)$, there are unique segments 
    $$[v_1,w_1]_-,[w_1,v_2]_+,\dots,[w_\ell,v_1]_+,$$
    such that $\widetilde\pi(v_i)$, $\widetilde\pi(w_i)$ are the vertices of $P$ and $\widetilde \pi[v_i,w_i]_-$, $\widetilde \pi[w_i,v_i]_+$ are the sides of $\partial P$. 
The uniqueness of the segments follows from the choice of the vertices and \Cref{claim:correspondence.manifolds}.    
    Call 
$$S_i=[v_i,w_i]_-\quad\text{and}\quad U_i=[w_i,v_{i+1}]_+,$$
\par
Since $$\widetilde\pi|_{S_i}\quad\text{and}\quad \widetilde\pi|_{U_i}$$
are not necessarily injective, it could happen that
$$\#S_i\cap U_j>1\quad\text{with }i\ne j. $$
 We will see that 
\begin{equation}\label{eq:partial.R}
\partial R=\bigcup_{i=1}^\ell S_i\cup U_i,    
\end{equation}
is a Jordan curve satisfying

\begin{equation}\label{eq:su.polygon.R}
\widetilde\pi(\partial R)=\partial P\quad\text{and}\quad \widetilde p\in \widetilde\pi(R_0), 
\end{equation}
where $R_0$ is the connected component bounded by $\partial R$.
\smallskip
For every $i=1,\dots,\ell$,
$$S_i\cap U_j$$
consists of a unique point if $j=i$ or $i-1$, adopting the notation $U_0=U_\ell$. This is a consequence of the fact that the vertices $v_i,w_i\in X$ and \Cref{prop:injectivity.unstable}. On the other hand, if $j\notin\{i,i-1\}$, then $\widetilde\pi(S_i)\cap\widetilde\pi(U_j)=S^*_i\cap U^*_j=\varnothing$. This implies that $\partial R$ is a Jordan curve satisfying $\deg(\pi|\partial R)=\pm 1$, and then $\widetilde\pi(\partial R)=\partial P$. 
Let $Q=\overline{R_0}$ the closure of the connected component of $\widetilde M\setminus\partial R$. Since $\deg(\pi|\partial Q)=\pm 1$, and  $\widetilde\pi(\partial Q)=\partial P$, we have $$P\subset \widetilde \pi(Q).$$
\end{proof}

\begin{lemma}[Fiber trapping]\label{lem:fiber.trapping}
For the $su$-polygons $P$ and $Q$ constructed in Lemmas \ref{lem:su.polygons.pi.X} and \ref{lem:su.polygons.X}, one has
\[
\widetilde\pi^{-1}(\widetilde p)\subset Q.
\]
\end{lemma}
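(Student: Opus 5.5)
We argue by contradiction. Suppose some $\widetilde z\in\widetilde\pi^{-1}(\widetilde p)$ lies outside $Q$. By \Cref{lem:su.polygons.X} there is also a point $\widetilde z_0\in\widetilde\pi^{-1}(\widetilde p)\cap Q$. The tool is the same intersection-number argument as in \Cref{pro:top.transv.int.general}. The boundary points of $Q$ all map into $\partial P$. Since $P$ is a small polygon around $\widetilde p$, we have $\widetilde p\in\inte P$, so $\widetilde p\notin\partial P$ and hence $\widetilde\pi^{-1}(\widetilde p)\cap\partial Q=\varnothing$. So $\widetilde z_0\in\inte Q$ and $\widetilde z\in\widetilde M\setminus Q$. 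The aim is to produce a point of $\partial Q$ that $\widetilde\pi$ sends to the inside of $P$, which is impossible.

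\textbf{Step 1: choose a pair of leaves through the fiber.} Work on the $f$-side first. Using that $\widetilde\pi(\widetilde X)$ is dense (the unstable conditionals of $\MME$ have full support), pick $\widetilde x\in\widetilde X$ with $\widetilde\pi(\widetilde x)$ very close to $\widetilde p$ inside $P$. By \Cref{prop:leafwise.surjectivity}, $\widetilde\pi$ maps $\widetilde W^+(\widetilde x;\widetilde g)$ monotonically onto $\widetilde W^u(\widetilde\pi(\widetilde x);\widetilde f)$. This leaf crosses $P$ and leaves it through two unstable-transverse points of $\partial P$, one on each side. Because $\widetilde\pi$ is monotone on $\widetilde W^+(\widetilde x)$, the preimage of the chord $\widetilde W^u(\widetilde\pi(\widetilde x))\cap P$ is a single compact arc $\gamma\subset\widetilde W^+(\widetilde x;\widetilde g)$. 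Only the two extreme points of $\gamma$ map to $\partial P$, and by \Cref{claim:correspondence.manifolds} they lie on the sides $S_i$ of $\partial Q$. Since $\widetilde x\in\gamma$, we get $\gamma\subset Q$, and $\gamma$ splits $Q$ into two sub-disks. Applying the same construction to stable leaves as well refines $Q$ into small sub-polygons $Q'$ whose images are small sub-polygons $P'$ of $P$ around $\widetilde p$. So the problem reduces to the same problem at arbitrarily fine scales.

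\textbf{Step 2: trap the far point.} Let $D=\sup d(\widetilde\pi,\Id)<\infty$. Choose the sub-polygons so small, and use $\widetilde g$-iterates (note $\widetilde g^n\widetilde z$ and $\widetilde g^n\widetilde z_0$ stay at distance at most $2D$ from each other by \Cref{prop.semiconj}), to move to a configuration in which $\widetilde z$ and $\widetilde z_0$ are separated by $\partial Q'$ but are close to each other. I expect this to be the main obstacle. In the universal cover, $\widetilde f$ expands along $\widetilde W^u$ and contracts along $\widetilde W^s$. Iterating forward by $\widetilde g^n$ therefore sends $P'$ to a long thin $su$-polygon, and $\widetilde g^n(Q')$ is an $su$-polygon for $\widetilde g$ whose unstable sides are very long and whose stable sides have images shrinking to points.

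\textbf{Step 3: get the contradiction.} Pick a path $\sigma$ from $\widetilde z_0$ to $\widetilde z$. It must cross $\partial Q$, say at a point of a side $U_j$ or $S_i$. That crossing point maps to $\partial P$, which is disjoint from $\widetilde p$. This alone gives no contradiction, so we need a linking argument instead. Use the homotopy $\widetilde\pi_t$ from $\Id$ to $\widetilde\pi$, and consider the winding number of $\widetilde\pi_t(\partial Q)$ around $\widetilde\pi_t(\widetilde z)$. At $t=0$ it is $0$, because $\widetilde z\notin Q$. At $t=1$ it is $\pm1$, because $\deg(\widetilde\pi|\partial Q)=\pm1$ and $\widetilde p\in\inte P$. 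So for some $t$ the point $\widetilde\pi_t(\widetilde z)$ meets $\widetilde\pi_t(\partial Q)$. To turn this into a contradiction, first apply the forward and backward iterates of Step 2. These make $\partial Q$ consist of sides whose $\widetilde\pi$-images are at distance greater than $3D$ from $\widetilde p$, either along the long unstable direction (forward iterates) or along the stable direction (backward iterates, handled symmetrically). The estimate from the Claim in \Cref{pro:top.transv.int.general} then shows that $\widetilde\pi_t(\widetilde z)\notin\widetilde\pi_t(\partial Q)$ for all $t$, so the winding number is constant in $t$. That contradicts the jump from $0$ to $\pm1$. The delicate part is arranging all sides to be far away at the same time. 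I would do this by splitting $\partial Q$ into the unstable-side part and the stable-side part, controlling each with the corresponding time direction, and combining the two by intersecting the trapping regions obtained.
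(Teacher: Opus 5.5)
There is a genuine gap, and it is the step you flagged yourself. The winding-number argument in Step~3 needs the whole closed curve $\widetilde\pi_t(\partial Q)$ to avoid $\widetilde\pi_t(\widetilde z)$ for every $t$. After replacing $Q$ by $\widetilde g^n(Q)$, this means every side of $\widetilde f^n(P)$ must lie at distance more than about $2D$ from $\widetilde f^n(\widetilde p)$. For a small $P$ this fails for every $n$:
\begin{itemize}
\item for $n>0$ the unstable sides of $\widetilde f^n(P)$ are pulled onto $\widetilde f^n(\widetilde p)$ in the stable direction;
\item for $n<0$ the stable sides are;
\item for $n$ near $0$ everything is small.
\end{itemize}
Without that condition, the jump of the winding number only tells you that $\widetilde p$ is within $2D$ of $\widetilde\pi(\partial Q)$, which is trivially true. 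A winding number cannot be computed from half of a closed curve. So ``controlling the stable and unstable parts with different time directions and intersecting the trapping regions'' is not yet an argument; the mechanism that would make it one is missing.

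Step~1 also has a circularity. You assert $\widetilde x\in Q$ for some $\widetilde x\in\widetilde X$ with $\widetilde\pi(\widetilde x)\in\inte P$, but that is an instance of the trapping statement you are proving. Moreover, \Cref{claim:correspondence.manifolds} only applies when both points lie in $\widetilde X$, which the endpoints of $\gamma$ need not.

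The missing idea is to replace the closed curve by a single complete line.
\begin{enumerate}
\item Extend each side of $\partial Q$ to the full $\widetilde g$-leaf $L_i$ containing it. These are proper lines, because $\widetilde\pi$ maps them monotonically onto complete leaves of $\widetilde f$.
\item If $\widetilde z_0\in Q$ and $\widetilde z\notin Q$ lay on the same side of every $L_i$, then $\widetilde z$ would lie in $Q$. Hence some $L=L_i$ separates them.
\item For every $n\in\mathbb Z$, the line $\widetilde g^n(L)$ separates $\widetilde g^n(\widetilde z_0)$ from $\widetilde g^n(\widetilde z)$. Both points lie within $K=\sup d(\widetilde\pi,\Id)$ of $\widetilde f^n(\widetilde p)$. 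So the short segment joining them crosses $\widetilde g^n(L)$, and therefore $\widetilde f^n(\widetilde\pi(L))=\widetilde\pi(\widetilde g^n(L))$ stays within a uniformly bounded distance of $\widetilde f^n(\widetilde p)$ for all $n$.
\item Since $\widetilde\pi(L)$ is a complete stable or unstable leaf of $\widetilde f$, expansivity forces $\widetilde p\in\widetilde\pi(L)$. This is where your time-direction splitting genuinely enters: use forward time for a stable leaf and backward time for an unstable one.
\item This contradicts the fact that $\widetilde\pi(L)$ supports a side of $P$ while $\widetilde p\in\inte P$.
\end{enumerate}
This is the paper's proof. Once you separate by one line, the bounded-distance estimate is immediate, so you need no homotopy $\widetilde\pi_t$, winding numbers, sub-polygon refinement, or choice of scale.
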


\begin{proof}
Set
\[
F=\widetilde{\pi}^{-1}(\widetilde{p}).
\]
By \Cref{lem:su.polygons.X}, there exists
\[
r\in F\cap\operatorname{int}(Q),
\]
since $\widetilde{\pi}(\partial Q)=\partial P$ and
$\widetilde{p}\in\operatorname{int}(P)$.

Suppose, by contradiction, that there exists
\[
q\in F\setminus Q.
\]
Extend the stable and unstable sides of $\partial Q$ to complete invariant
manifolds $L_i$, $i=1,\dots,2\ell$. 
If $r$ and $q$ are on the same connected component of $\widetilde M\setminus L_i$ for every $i$, then $r$ and $q$ belong to $Q$. Then there is a line, say $L$, that separates $r$ from $q$.

Let
\[
K=\sup_{z\in\widetilde M}
d\bigl(z,\widetilde{\pi}(z)\bigr)<\infty.
\]
Since $r,q\in F$, for every $n\in\mathbb Z$,
\[
d\bigl(\widetilde g^{\,n}(r),\widetilde f^{\,n}(\widetilde p)\bigr)
\leq K,
\qquad
d\bigl(\widetilde g^{\,n}(q),\widetilde f^{\,n}(\widetilde p)\bigr)
\leq K.
\]
Moreover, $\widetilde g^{\,n}(L)$ separates
$\widetilde g^{\,n}(r)$ from $\widetilde g^{\,n}(q)$. Consequently,
\[
d\bigl(\widetilde f^{\,n}(\widetilde p),
       \widetilde g^{\,n}(L)\bigr)
\]
is uniformly bounded in $n$. Since $\widetilde{\pi}$ is at bounded distance
from the identity and
\[
\widetilde{\pi}\bigl(\widetilde g^{\,n}(L)\bigr)
=
\widetilde f^{\,n}\bigl(\widetilde{\pi}(L)\bigr),
\]
it follows that
\[
d\left(
  \widetilde f^{\,n}(\widetilde p),
  \widetilde f^{\,n}\bigl(\widetilde{\pi}(L)\bigr)
\right)
\]
is also uniformly bounded.

Since $\widetilde f$ is infinitely expansive on $\widetilde M$, this would imply that $\widetilde p\in\widetilde\pi(L)$, but $\widetilde p$ does not belong to the supporting leaf
$\widetilde{\pi}(L)$. Therefore such a point $q$ cannot exist, and hence
\[
\widetilde{\pi}^{-1}(\widetilde p)\subset Q.
\]

\end{proof}

\begin{proposition}\label{prop:adapted.neighborhood.basis}
Let
\[
F=\widetilde\pi^{-1}(\widetilde p).
\]
Let $P_n$ be a decreasing sequence of $su$-polygons as obtained in \Cref{lem:su.polygons.pi.X} such that
$$\{\widetilde p\}=\bigcap_n P_n.$$
Let $Q_n$ be the $su$-polygon obtained from $P_n$ as in \Cref{lem:su.polygons.X}. Then,
$$F=\bigcap_nQ_n.$$
\end{proposition}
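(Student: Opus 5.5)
The plan is to prove the two inclusions separately. For $F\subset\bigcap_n Q_n$ there is nothing new to do: \Cref{lem:fiber.trapping}, applied to each pair $(P_n,Q_n)$, gives $F\subset Q_n$ for every $n$.

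For $\bigcap_n Q_n\subset F$, the key observation is that $\widetilde\pi(Q_n)\subset P_n$. Granting this, if $z\in\bigcap_n Q_n$ then
\[
\widetilde\pi(z)\in\bigcap_n P_n=\{\widetilde p\},
\]
so $z\in F$.

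The real content is therefore the inclusion $\widetilde\pi(Q_n)\subset P_n$. \Cref{lem:su.polygons.X} only gives $\widetilde\pi(\partial Q_n)=\partial P_n$ and $P_n\subset\widetilde\pi(Q_n)$, so it remains to rule out points of $\operatorname{int}Q_n$ being mapped outside $P_n$.

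I would argue by contradiction, in the spirit of the fiber-trapping lemma. Suppose $z\in\operatorname{int}Q_n$ with $\widetilde\pi(z)=q'\notin P_n$.
\begin{enumerate}
\item Since $P_n$ is leafwise convex with sides on stable and unstable leaves, some complete leaf $\Lambda$ containing a side $\widetilde\pi(S_i)$ or $\widetilde\pi(U_i)$ separates $q'$ from $\operatorname{int}P_n$. As in \Cref{lem:fiber.trapping}, one checks that if $q'$ were on the same side as $P_n$ for every extended side leaf, it would lie in $P_n$.
\item Let $L$ be the complete $g$-invariant manifold extending the corresponding side of $Q_n$. Its vertices lie in $\widetilde X$, so by \Cref{prop:leafwise.surjectivity} we have $\widetilde\pi(L)=\Lambda$, and by \Cref{prop:monotonicity} $\widetilde\pi$ is monotone on $L$.
\item Take the component $V$ of $\widetilde M\setminus L$ that contains $\operatorname{int}Q_n$ near that side. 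The goal is to show $\widetilde\pi(V)$ avoids the far side of $\Lambda$.
\item To do this, iterate. The points $\widetilde g^k(z)$ and $\widetilde g^k(L)$ map to $\widetilde f^k(q')$ and $\widetilde f^k(\Lambda)$. For a stable side, use forward iterates; for an unstable side, use backward ones. Along these iterates, $\widetilde f^k(q')$ moves away from $\widetilde f^k(\Lambda)$ transversally at exponential rate.
\item The bounded displacement $K$ keeps $\widetilde g^k(z)$ within $K$ of $\widetilde f^k(q')$, which puts $\widetilde g^k(z)$ on the far side of $\widetilde g^k(L)$ from the rest of $\widetilde g^k(Q_n)$. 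This is incompatible with $\widetilde g^k(z)\in\operatorname{int}\widetilde g^k(Q_n)$, since that interior is bounded by $\widetilde g^k(\partial Q_n)$ and $\widetilde\pi(\widetilde g^k(\partial Q_n))=\widetilde f^k(\partial P_n)$, which lies on the near side of $\widetilde f^k(\Lambda)$.
\end{enumerate}

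I expect the main obstacle to be making step~5 rigorous. The difficulty is that $\widetilde g^k(Q_n)$ is a disk whose image boundary stays on one side of $\widetilde f^k(\Lambda)$, yet an interior point could a priori project far on the other side. I would first try a degree argument: for a point $w$ on the far side of $\Lambda$ beyond distance $2K$, the map $\widetilde\pi|_{\partial Q_n}$ is homotopic to the inclusion within $\widetilde M\setminus\{w\}$. That gives a winding-number constraint, but it only says $w$ is covered with degree zero, not that it is missed.

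The fallback, which I find more robust, is to reuse the separation argument of \Cref{lem:fiber.trapping} directly. The fiber $\widetilde\pi^{-1}(q')$ is connected to nothing forcing it inside, so instead I would apply \Cref{lem:fiber.trapping} to the point $q'$ itself, choosing a polygon around $q'$ disjoint from $P_n$. Its lift $Q'$ contains $\widetilde\pi^{-1}(q')\ni z$, and one must show $Q'$ and $Q_n$ have disjoint interiors. This follows from the vertex-correspondence of \Cref{claim:correspondence.manifolds} and a Jordan-curve argument on the boundary arcs, which must then cross, forcing their images to meet — a contradiction because $P'\cap P_n=\varnothing$.
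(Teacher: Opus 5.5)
Your first inclusion is correct. Reducing the second to $\widetilde\pi(Q_n)\subset P_n$ is also fine. Your fallback is in fact the paper's strategy: surround $q'=\widetilde\pi(z)$ by an adapted polygon $P'$ disjoint from $P_n$, note that its lift $Q'$ contains $z$ by \Cref{lem:fiber.trapping}, and show $Q'\cap Q_n=\varnothing$.

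The gap is in this last step. Since $\widetilde\pi(\partial Q')=\partial P'$ and $\widetilde\pi(\partial Q_n)=\partial P_n$ are disjoint, the curves $\partial Q'$ and $\partial Q_n$ are automatically disjoint. So there is no crossing to exploit, and \Cref{claim:correspondence.manifolds} is not needed here. Two Jordan domains with disjoint boundaries and intersecting interiors are nested. The only case that actually has to be excluded is therefore $Q'\subset\inte Q_n$ (or $Q_n\subset\inte Q'$), and there the boundary arcs never cross. Your argument says nothing about this case. Yet this nested configuration, a sub-disk of $\inte Q_n$ that $\widetilde\pi$ maps onto $P'$ outside $P_n$, is exactly the difficulty you hit in step 5. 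As you observed with the degree argument, knowing $\widetilde\pi(\partial Q_n)=\partial P_n$ and $P_n\subset\widetilde\pi(Q_n)$ cannot exclude it.

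What is missing is \Cref{prop:monotonicity} applied at a point of $\widetilde X$ inside $Q_n$. You cite monotonicity in step 2 but never use it where it matters. The paper argues as follows when $Q'\subset\inte Q_n$:
\begin{enumerate}
\item Take a vertex $v'$ of $Q'$. It lies in $\widetilde X$, so $\widetilde\pi$ is monotone on $L'=\widetilde W^+(v';\widetilde g)$.
\item Since $L'$ is proper, the component of $L'\cap Q_n$ through $v'$ is an arc $[a,b]$ with $a<v'<b$ and $a,b\in\partial Q_n$. Hence $\widetilde\pi(a),\widetilde\pi(b)\in\partial P_n\cap\widetilde\pi(L')$.
\item Leafwise convexity of $P_n$ puts the leaf segment between $\widetilde\pi(a)$ and $\widetilde\pi(b)$ inside $P_n$. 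Monotonicity of $\widetilde\pi$ on $L'$ puts $\widetilde\pi(v')$ on that segment, so $\widetilde\pi(v')\in P_n$.
\item But $\widetilde\pi(v')$ is a vertex of $P'$, and $P'\cap P_n=\varnothing$, a contradiction.
\end{enumerate}
The case $Q_n\subset\inte Q'$ is symmetric, using a vertex of $Q_n$ and the convexity of $P'$.

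This also explains why the auxiliary polygon is needed at all. Monotonicity is only available on leaves through points of $\widetilde X$, and $z$ itself need not be such a point. The vertices of $Q'$, by contrast, are points of $\widetilde X$ lying in $Q_n$ whose images lie outside $P_n$.
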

\begin{proof}
From \Cref{lem:fiber.trapping} we know that
$$F\subset \bigcap_n Q_n.$$
Let $$y\in\bigcap_n Q_n.$$ Suppose $\widetilde\pi(y)\ne\widetilde p$. Then, there are two disjoint $su$-polygons $P$ and $P'$, as found in \Cref{lem:su.polygons.pi.X}, such that $\widetilde p\in P$ and $\widetilde\pi(y)\in P'$. Call $Q$ and $Q'$ the corresponding $su$-polygons found in \Cref{lem:su.polygons.X}. We may assume $y\in \inte Q$ and $q=\pi(y)\in\inte P'$. Then, \Cref{lem:fiber.trapping} implies that $y\in\inte Q'$.
Now, 
$$\partial Q\cap \partial Q'\subset \widetilde\pi^{-1}(\widetilde\pi(\partial Q)\cap\widetilde\pi(\partial Q'))\subset \widetilde\pi^{-1}(\partial P\cap\partial P')=\varnothing.$$
Two Jordan disks with disjoint boundaries and intersecting interiors must be nested. \par
Assume $$Q'\subset\inte Q.$$
Take a vertex $v'$ of $Q'$ and set $L'=\widetilde W^+(v';\widetilde g)$. Let $[a,b]_{L'}$ be the component of $L'\cap Q$ containing $v'$. Since $L'$ is proper, $a,b\in\partial Q$. Hence, 
$$\widetilde\pi(a),\widetilde\pi(b)\in\partial P\cap\widetilde\pi(L').$$
By leafwise convexity of $P$, the interval in $\widetilde\pi(L')$ between these two points is contained in $P$. Since $\widetilde\pi$ is monotone on $L'$ and $a<v'<b$, $$\widetilde\pi(v')\in P.$$
But $\widetilde\pi(v')$ is a vertex of $P'$, contradicting $P\cap P'=\varnothing$. The opposite nesting is treated symmetrically. Thus, $Q\cap Q'=\varnothing$, contradicting that $y\in Q\cap Q'$. 
\end{proof}
\begin{lemma}\label{lem:nested} The sequence of $su$-polygons obtained in \Cref{lem:su.polygons.pi.X} can be chosen so that the corresponding sequence of $su$-polygons $Q_n$ obtained from
$P_n$ in \Cref{lem:su.polygons.X} is a nested sequence.

\end{lemma}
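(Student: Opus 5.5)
The plan is to build the sequence $P_n$ inductively so that each new polygon sits well inside the previous one, and then to transfer this nesting upstairs using the same boundary-disjointness and monotonicity argument as in the proof of \Cref{prop:adapted.neighborhood.basis}. Start with any $P_1$ from \Cref{lem:su.polygons.pi.X}. Given $P_n$, use the fact that \Cref{lem:su.polygons.pi.X} produces a neighborhood basis of $\widetilde p$, and choose $P_{n+1}\subset\inte P_n$ whose diameter is also less than $1/n$. Then $\partial P_{n+1}\cap\partial P_n=\varnothing$ and $\bigcap_n P_n=\{\widetilde p\}$. Let $Q_n$ and $Q_{n+1}$ be the lifted polygons given by \Cref{lem:su.polygons.X}.

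\emph{Step 1 (disjoint boundaries).} Since $\widetilde\pi(\partial Q_k)=\partial P_k$, we get
\[
\partial Q_n\cap\partial Q_{n+1}\subset\widetilde\pi^{-1}(\partial P_n\cap\partial P_{n+1})=\varnothing.
\]

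\emph{Step 2 (intersecting interiors).} By \Cref{lem:fiber.trapping}, the fiber $F=\widetilde\pi^{-1}(\widetilde p)$ is contained in both $Q_n$ and $Q_{n+1}$. Moreover, $F$ avoids both boundaries, because $\widetilde p\notin\partial P_k$. So $F$ is a nonempty subset of $\inte Q_n\cap\inte Q_{n+1}$. Two Jordan domains with disjoint boundaries and intersecting interiors are nested, so either $Q_{n+1}\subset\inte Q_n$ or $Q_n\subset\inte Q_{n+1}$.

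\emph{Step 3 (ruling out the wrong nesting).} Suppose $Q_n\subset\inte Q_{n+1}$. Take a vertex $v$ of $Q_n$ and let $L=\widetilde W^+(v;\widetilde g)$. By construction $v\in\widetilde X$, so $\widetilde\pi$ is monotone on $L$ and $L$ is proper. Let $[a,b]_L$ be the component of $L\cap Q_{n+1}$ containing $v$. Then $a,b\in\partial Q_{n+1}$, so $\widetilde\pi(a),\widetilde\pi(b)\in\partial P_{n+1}\cap\widetilde\pi(L)$. Leafwise convexity of $P_{n+1}$ puts the leaf segment between $\widetilde\pi(a)$ and $\widetilde\pi(b)$ inside $P_{n+1}$. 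Monotonicity of $\widetilde\pi$ on $L$ then gives $\widetilde\pi(v)\in P_{n+1}\subset\inte P_n$. This contradicts the fact that $\widetilde\pi(v)$ is a vertex of $P_n$, hence lies on $\partial P_n$. Therefore $Q_{n+1}\subset\inte Q_n$, and by induction the sequence $(Q_n)$ is nested.

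The main point needing care is Step 3. It uses that the vertices of $Q_n$ lie in $\widetilde X$, so that monotonicity and leafwise surjectivity hold along the chosen leaf; this is guaranteed by the construction in \Cref{lem:su.polygons.X}. It also uses that the component of $L\cap Q_{n+1}$ through $v$ is a compact arc with endpoints on $\partial Q_{n+1}$, which follows from properness of lifted invariant manifolds together with the inclusion $v\in\inte Q_{n+1}$. If monotonicity is only available for points of $\widetilde X$, I would pass to the set of vertices, which all lie in $\widetilde X$, and nothing more is needed.
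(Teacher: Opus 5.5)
Your proof is correct, but it takes a different route from the paper. The paper never uses the nesting dichotomy. It reads \Cref{lem:fiber.trapping} as holding for every point of $\inte P_n$, not only for $\widetilde p$; its proof works verbatim for any such point. This gives $\widetilde\pi^{-1}(\inte P_n)\subset\inte Q_n$. With $P_{n+1}\subset\inte P_n$ it then obtains directly
\[
\partial Q_{n+1}\subset\widetilde\pi^{-1}(\partial P_{n+1})\subset\widetilde\pi^{-1}(\inte P_n)\subset\inte Q_n ,
\]
after which $Q_{n+1}\subset Q_n$ follows from the Jordan curve theorem alone.

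You use the trapping lemma only at the base point $\widetilde p$. That gives intersecting interiors, and together with disjoint boundaries, the dichotomy. You then exclude the reversed inclusion $Q_n\subset\inte Q_{n+1}$ by the vertex/monotonicity/leafwise-convexity argument from the proof of \Cref{prop:adapted.neighborhood.basis}. In that argument you replace $P\cap P'=\varnothing$ by $P_{n+1}\subset\inte P_n$ together with $\widetilde\pi(v)\in\partial P_n$, and this substitution is valid.

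The paper's version is shorter, avoids repeating the vertex argument, and yields the stronger inclusion $\widetilde\pi^{-1}(\inte P_n)\subset\inte Q_n$. Yours uses \Cref{lem:fiber.trapping} exactly as stated. The cost is re-invoking, along $\widetilde W^+(v;\widetilde g)$ for a vertex $v\in\widetilde X$, the monotonicity of \Cref{prop:monotonicity}, leafwise surjectivity, and properness of the lifted leaf. The paper already relies on all of these in \Cref{prop:adapted.neighborhood.basis}, so you assume nothing beyond what the paper does.
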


\begin{proof}
From \Cref{lem:fiber.trapping} it follows that
\[
\widetilde{\pi}^{-1}(\operatorname{int}(P_n))
\subset \operatorname{int} Q_n.
\]
Choose
\[
P_{n+1}\subset \operatorname{int}(P_n).
\]
Then
\[
\partial Q_{n+1}
\subset \widetilde{\pi}^{-1}(\partial P_{n+1})
\subset \operatorname{int} Q_n.
\]
\end{proof}
\smallskip 
 
Therefore $F=\widetilde\pi^{-1}(\widetilde p)$ is cellular for every $\widetilde p\in\widetilde M$. The same thus holds in $M$:
$\pi^{-1}(p)$ is cellular for every $p\in M$.
This finishes the proof of \Cref{mainthm:fiber}.

\subsection*{Acknowledgments} P.D.C., J.R.-H., and R.U. thank Penn State University for its hospitality, where this project was initiated. P.D.C. also thanks SUSTech for its hospitality, where part of this work was carried out.

\section*{Declaration of generative AI and AI-assisted technologies in the manuscript preparation process}

During the preparation of this manuscript, the authors used ChatGPT (OpenAI, GPT-5.6 Sol) for assistance with language editing, LaTeX formatting, and identifying passages or arguments requiring clarification or further verification. The authors reviewed and revised all output, independently verified the mathematical arguments and references, and take full responsibility for the content of the manuscript.

 \bibliographystyle{alpha}
 \bibliography{references}

 \end{document}